\newcommand*\diff{\mathop{}\!\mathrm{d}}
\newcommand{\nocontentsline}[3]{}
\newcommand{\tocless}[2]{\bgroup\let\addcontentsline=\nocontentsline#1{#2}\egroup}
\def\d{\mathrm{d}} 
\theoremstyle{plain}
\newtheorem{theorem}{Theorem}[section]
\newtheorem{lemma}[theorem]{Lemma}
\theoremstyle{definition}
\theoremstyle{remark}
\newtheorem{remark}[theorem]{Remark}
\numberwithin{equation}{section} \theoremstyle{corollary}
\newenvironment{proof}[1][Proof]{\begin{trivlist}
\item[\hskip \labelsep {\bfseries #1}]}{\end{trivlist}}
\begin{document}
\begin{frontmatter}

\title{Stabilization of turning processes using spindle feedback with state-dependent delay\thanksref{GRN}}
\thanks[GRN]{}

 \author[UTD]{Qingwen Hu}  \ead{qingwen@utdallas.edu} 
 \and\author[UTD]{Huan Zhang}\ead{hxz143430@utdallas.edu} 
 \address[UTD]{Department of Mathematical Sciences,  The University of Texas at Dallas, Richardson, TX, 75080, USA}

\date{}

\maketitle

\begin{abstract}
We develop a stabilization strategy of turning processes by means of delayed spindle control. We show that turning processes which contain intrinsic state-dependent delays can be stabilized by a spindle control with state-dependent delay, and develop analytical  description of the stability region in the parameter space.
Numerical simulations stability region are also given to illustrate the general results.
\end{abstract}

\begin{keyword} Turning processes \sep state-dependent delay\sep    stability\sep spindle speed 
 
\end{keyword}

 
 \end{frontmatter}
\thispagestyle{plain}
 
\section{Introduction} Study of machine-tool chatter generated between the workpiece and the cutting tool in turning processes 
can be traced  back to the work of Taylor  \cite{Taylor}. Since violent chatter severely limits the productivity and product quality of the manufacturing processes and  significantly reduces the tool life, extensive efforts \cite{Tobias-0,Tobias,KT,Altintas,  BST,Balach,Gilsinn,HKT,HKT-1,Insperger-2,Ismail,Long-1,Stone-Sue,Smith-Tlusky,Stepan} till recent years have been contributed to understanding the mechanisms of chatter and to find efficient strategies for suppression of the vibrations. However, the long standing research has proved that the investigation of machine-tool chatter is a highly nontrivial and delicate problem. This is partially due to the complexity interplayed by the configurations of  the  machine which  feeds  and rotates the workpiece, the tool which cuts the workpiece with certain depth and width, and the material properties of workpiece and machining tool including shape and stiffness.  

Delay differential equations were introduced to model machine-tool vibrations as early as the work of  Koenigsberger and  Tlusty~\cite{KT}. It has been observed in the  work \cite{Turi-1, BST} that the time  delay  between two succeeding  cuts depends  on the speed of the workpiece rotation and on the  workpiece  surface generated by the earlier cut. Namely, the time delay is generically state-dependent. This observation has led to modeling the turning processes with state-dependent delay differential equations. We refer to \cite{HKWW} for a review of this type of delay differential equations and refer to \cite{Turi-1} for a state-dependent model of turning processes.

Spindle speed control in the turning process is an important method for suppression of machine-tool chatter and has been extensively investigated in the literature \cite{Pak,Sexton,Insperger-2}. In the work of \cite{HKT,HKT-1} we proposed  a spindle speed control law  for the state-dependent model and determined the stability region where stabilization can be achieved in high speed turning processes. However, such a spindle speed is a real-time control law which depends on the instant status of the system and may be difficult to accomplish for practical realization since usually there must be certain response time to activate the state-dependent control. In this paper, we propose a delayed spindle speed control law based on a state-dependent delay differential equation. Such a delayed spindle control will be dependent on historical state of the system. We are interested whether or not the delayed spindle control is feasible to achieve stability, and if feasible, how to analytically describe the stability region in the parameter space. 

The significance of  analytical descriptions of the stability regions includes that it indicates where in the parameter space the working machine-tool  may achieve stability during the process, and that, if   stability is not achievable, namely, persistent vibrations are unavoidable,  we can predict the vibrations through bifurcation analysis by observing the crossing of the stability boundaries.

We organize the remaining part of the paper as following: In Section~\ref{model}
 we introduce a model for turning processes with state-dependent delay and propose a stabilization of delayed spindle velocity. We show that the resulting nonlinear systems with state-dependent delay can be transformed into a system with constant and distributed delays for which we develop linear stability analysis and analytical description of  the stability region in the parameter space.  In Section~\ref{sec-instant-spindle}, we analyze a model of turning  processes with instantaneous spindle control strategy, for comparison  of the stability region of the corresponding delayed spindle control.  Numerical simulations are provided in Section~\ref{Numerical-simulations} to illustrate the results obtained in Sections~\ref{model} and \ref{sec-instant-spindle}. We   conclude the paper and discuss open problems in Section~\ref{Concluding-remarks}.

\section{Turning processes with delayed state-dependent spindle control}\label{model}

 Our starting point is the turning process as shown in Figure~\ref{Figure1}. The tool is assumed to be compliant and has bending oscillations in directions $x$ and $y$. The governing equations read
\begin{align}
 m\ddot{x}(t)+c_x\dot{x}(t)+k_xx(t)&=F_x,\label{milling-eqn-1-1}\\
  m\ddot{y}(t)+c_y\dot{y}(t)+k_yy(t)&=-F_y.\label{milling-eqn-1-2}
\end{align}
The $x$ and $y$ components of the cutting process force can be written as
\begin{align}
F_x&=K_x\omega h^q,\label{milling-eqn-2-1}\\
F_y&=K_y\omega h^q,\label{milling-eqn-2-2}
\end{align}
where   $m$ is the mass of the tool; $K_x$ and $K_y$ are the cutting coefficients in the $x$ and $y$ directions; 
$k_x$,   $k_y$  are stiffness; 
$c_x$,   $c_y$   are damping coefficients;
$\omega$ is the depth of cut; $h$ is the chip thickness and $q$ is an exponent with empirical value 0.75. The chip thickness $h$ is determined by the feed motion, the current tool position, and the earlier position of the tool and is given as follows
\begin{align}
h(t)=\nu\tau(t)+y(t)-y(t-\tau(t)),\label{milling-eqn-9}
\end{align}where $\nu$ is the speed of the feed.
\begin{figure}[H]
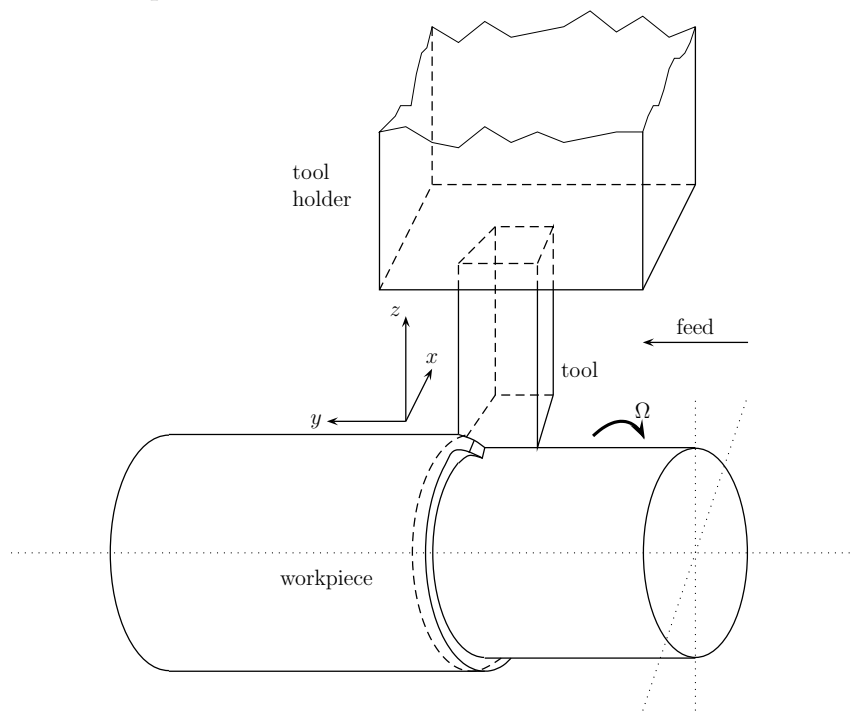

\vskip7cm\hskip10cm
\scalebox{.7}{
\psellipse(0,0)(1,2)
\psellipse(-4,0)(1,2)
\psline[linecolor=white,fillcolor=white,fillstyle=solid](-4,-2)(-4,2)(-3,2)(-3,-2)
\psline[linecolor=white,fillcolor=white,fillstyle=solid](-4.5,2)(-4,2)(-4,1.7)(-4.5,1.7)
\psline(-4,2)(0,2)
\psline(-4,-2)(0,-2)
\pscurve(-4.5, 2.25)
                       (-4.2, 2.139877162)
                       (-4,2)(-4,2)
                         (-4.047644118, 1.8) (-4.047644118, 1.8)
                         (-4.562500000, 1.948557159)
                         (-4.752771932, 1.672075858)
                         (-4.910144119, 1.322516818)
                        (-5.027738640, 0.9151574468)
                        (-5.100416051, 0.4678013045)
                                (-5.125, 0.)
                        (-5.100416051, -0.4678013045)
                        (-5.027738640, -0.9151574468)
                        (-4.910144119, -1.322516818)
                        (-4.752771932, -1.672075858)
                        (-4.562500000, -1.948557159)
                        (-4.347644118, -2.139877162)
                        (-4.117594521, -2.237674265)
                        (-3.882405479, -2.237674265)
                        (-3.652355882, -2.139877162)
                        (-3.5, -2)
\pscurve[linestyle=dashed] (-4.367594521, 2.237674265)
                         (-4.597644118, 2.139877162)
                         (-4.812500000, 1.948557159)
                         (-5.002771932, 1.672075858)
                         (-5.160144119, 1.322516818)
                        (-5.277738640, 0.9151574468)
                        (-5.350416051, 0.4678013045)
                                (-5.375, 0.)
                        (-5.350416051, -0.4678013045)
                        (-5.277738640, -0.9151574468)
                        (-5.160144119, -1.322516818)
                        (-5.002771932, -1.672075858)
                        (-4.812500000, -1.948557159)
                        (-4.597644118, -2.139877162)
                        (-4.367594521, -2.237674265)
                        (-4.132405479, -2.237674265)
                        (-3.902355882, -2.139877162)
                        (-3.687500000, -2)
\psline(-4.2, 2.139877162)(-4.3,1.9)
\pscurve(-4.5,1.7)(-4.3,1.85)(-4.047644118, 1.8)
\psellipse(-10,0)(1.125,2.25)
\psline[linecolor=white,fillcolor=white,fillstyle=solid](-10,2.25)(-8,2.25)(-8,-2.25)(-10,-2.25)
\psline(-10,-2.25)(-4,-2.25)\psline(-10,2.25)(-4.5,2.25)
\psline(-4.5,2.25)(-4.5,5)
\psline(-2.7,3)(-3.,2.)(-3.,5)
\psline[linestyle=dashed](-2.7,3)(-3.8,3)(-4.35,2.2)
\psline(-6,8)(-6,5)(-1,5)(-1,8)
\psline(-1,5)(0,7)(0,10)
\psline(-2.7,3)(-2.7,5)
\psline[linestyle=dashed](0,7)(-5,7)(-6,5)
\psline[linestyle=dashed](-5,10)(-5,7)
\psline[linestyle=dashed](-4.5,5)(-4.5,5.5)(-3.,5.5)(-3.,5)
\psline[linestyle=dashed](-3.,5.5)(-2.7,6.2)(-2.7,5)
\psline[linestyle=dashed](-2.7,6.2)(-3.8,6.2)(-4.5,5.5)
\psline[linestyle=dashed](-3.8,6.2)(-3.8,3)
\psline[linewidth=.5pt](0,10)(-.5,9.8)(-1,10.2)(-1.5,9.9)(-2,10.3)(-2.5,10)(-3,9.9)(-3.5,9.8)(-4,10.1)(-4.5,9.7)(-5,10)(-5.1,9.6)(-5.2,9.5)(-5.3,9.1)(-5.4,8.5)(-5.6,8.5)(-5.7,8.3)(-6,8)(-5.5,8.1)(-5,7.8)(-4.5,7.7)(-4,8.1)(-3.5,7.8)(-3,8)(-2.5,7.8)(-2,7.9)(-1.5,8)(-1,8)(-0.9,8.3)(-.8,8.5)(-.7,8.5)(-.6,8.8)(-.5,9.2)(-.4,9.4)(-.3,9.4)(-.2,9.5)(-.1,9.7)(0,10)
\pscurve[linewidth=1.7pt]{<-}(-0.960769515, 2.165063510)
                         (-1.025861389, 2.279038980)
                         (-1.100686534, 2.370243048)
                         (-1.184497322, 2.437764430)
                         (-1.276456342, 2.480928476)
                         (-1.375644772, 2.499303904)
                         (-1.481071553, 2.492707115)
                         (-1.591683296, 2.461204019)
                         (-1.706374804, 2.405109387)
                         (-1.824000117, 2.324983697)
                         (-1.943383964, 2.221627538)
\rput(-1,2.7){$\Omega$}
\psline{->}(-5.5,2.5)(-7,2.5)
\psline{->}(-5.5,2.5)(-5.5,4.5)
\psline{->}(-5.5,2.5)(-5,3.5)
\psline[linestyle=dotted](-13,0)(3,0)
\psline[linestyle=dotted](0,-3)(0,3)
\psline[linestyle=dotted](-1,-3)(1,3)
\rput(-7.2,2.5){$y$}
\rput(-5.7,4.6){$z$}
\rput(-5,3.7){$x$}
\rput(-7.4,7){\vbox{\hsize.5cm{\noindent{tool holder}}}}
\psline{->}(1,4)(-1,4)
\rput(0,4.3){feed}
\rput(-2.2,3.5){tool}
\rput(-7,-.5){workpiece}
}
\vskip2cm
\caption{Turning model}\label{Figure1}
\end{figure}
 
Assuming that the spindle velocity $\Omega$ (expressed in [rad/s]) is time-varying, the time delay $\tau$ between the present and the previous cut is determined by the equation,
\begin{align}\label{milling-eqn-3} 
R \int_{t-\tau(t)}^{t} \Omega (s) \diff s = 2 R \pi +x(t) -x(t- \tau (t)),
\end{align}
where $R$ is the radius of the workpiece. Then the time delay $\tau$ is implicitly determined by the oscillation in the $x$ direction. Equations~(\ref{milling-eqn-1-1}--\ref{milling-eqn-3}) form a model of turning processes with state-dependent delay.

We rewrite the governing equation (\ref{milling-eqn-3})  for the delay as
\begin{align}
\int_{t-\tau(t)}^t\frac{R\Omega-\dot{x}(s)}{2R\pi}\mathrm{d}s=1.\label{milling-eqn-5}
\end{align}
Let $\dot{x}(t)=u(t),\,\dot{y}(t)=v(t)$.  System (\ref{milling-eqn-1-1}--\ref{milling-eqn-3}) can be re-written as
\begin{align}\label{SDDEs-system}\left\{\begin{aligned}
 \frac{\mathrm{d}}{\mathrm{d}t}\begin{bmatrix}x\\ y\\  u \\ v\end{bmatrix}&=
\begin{bmatrix}
u\\
v\\
-\frac{c_x}{m}u-\frac{k_x}{m}x+\frac{K_x\omega}{m}(\nu\tau+y(t)-y(t-\tau(t)))^q\\
-\frac{c_y}{m}v-\frac{k_y}{m}y-\frac{K_y\omega}{m}(\nu\tau+y(t)-y(t-\tau(t)))^q
\end{bmatrix},\\
1&=\int_{t-\tau(t)}^t \frac{R\Omega-u(s)}{2\pi R}\mathrm{d}s, 
\end{aligned}\right.
\end{align} 
which is a system of differential  equations with threshold type state-dependent delay \cite{HU-JDE-1}. 
We are concerned about stabilization of turning processes with a delayed spindle speed control.  We assume that
\begin{align}\label{delayed-spindle}
\Omega(t) = \frac{c}{R} x(t- \tau (t)),                                                                                          \end{align} where $c \in \mathbb{R}$ is a parameter.
Now we show that system~(\ref{SDDEs-system}) coupled with the spindle speed control (\ref{delayed-spindle}) can be transformed into a system of integral-differential equations.
Assuming that $R\Omega >u(t)$ for all  $t\in\mathbb{R}$, we consider the following process of change of variables for system (\ref{SDDEs-system}):
\begin{align}
\eta & =\int_{0}^t\frac{R\Omega-u(s)}{2\pi R}\mathrm{d}s\notag \\ 
 & =\int_{0}^t\frac{cx(s-\tau(s))-u(s)}{2\pi R}\mathrm{d}s,\label{c-v-1}\\
 r(\eta)& =x(t),\label{c-v-2}\\
 \rho(\eta)&= y(t),\label{c-v-3}\\
 j(\eta) &=u(t),\label{c-v-4}\\
 l(\eta) &= v(t),\label{c-v-5}\\
 k(\eta) &= \tau(t).\label{c-v-6}
\end{align} Then by (\ref{milling-eqn-5}) and (\ref{c-v-1}) we have
\[
\eta-1= \int_{0}^{t-\tau(t)}\frac{R\Omega-u(s)}{2\pi R}\mathrm{d}s,                                                                                                              \] and
\begin{align*} 
 r(\eta-1)& =x(t-\tau(t)),\\
 \rho(\eta-1)&= y(t-\tau(t)).
\end{align*}
The second equation of (\ref{SDDEs-system})  for $\tau$ can be rewritten as
\begin{align*}
 \tau(t)& =t-(t-\tau(t))\\
        & = \int_{\eta-1}^\eta \frac{\mathrm{d}t}{\mathrm{d}\bar{\eta}}d\bar{\eta}\\
        & = \int_{\eta-1}^\eta \frac{2\pi R}{c\, r(\bar{\eta}-1)-j(\bar{\eta})}\mathrm{d}\bar{\eta}\\
        & =  \int_{-1}^0 \frac{2\pi R}{c\, r_\eta(s-1)-j_{\eta}(s)}\mathrm{d}s.
\end{align*}
It follows that
\begin{align}\label{eqn-z-eta}
 k(\eta) =  \int_{-1}^0 \frac{2\pi R}{c\, r_\eta(s-1)-j_{\eta}(s)}\mathrm{d}s.
\end{align} Furthermore, taking derivative with respect to $t$ on both sides of (\ref{c-v-2}) we have
\begin{align*}
 \frac{\mathrm{d}r}{\mathrm{d}\eta} \frac{\mathrm{d}\eta}{\mathrm{d}t}=\dot{x}(t)
\end{align*}which leads to
\begin{align*}
 \frac{\mathrm{d}r}{\mathrm{d}\eta} =\dot{x}(t)\frac{\mathrm{d}t}{\mathrm{d}\eta}=j(\eta)\frac{2\pi R}{c\,r(\eta-1)-j(\eta)}.
\end{align*}
Similarly we have
\begin{align*}
 \frac{\mathrm{d}\rho}{\mathrm{d}\eta} =\dot{y}(t)\frac{\mathrm{d}t}{\mathrm{d}\eta}=l(\eta)\frac{2\pi R}{c\,r(\eta-1)-j(\eta)}.
\end{align*}Therefore, system (\ref{SDDEs-system}) can be rewritten as
\begin{align}\label{SDDEs-system-eta}\left\{\begin{aligned}
 \frac{\mathrm{d}}{\mathrm{d}\eta}\begin{bmatrix}r\\ \rho\\  j \\ l\end{bmatrix}&=
\begin{bmatrix}
j\\
l\\
-\frac{c_x}{m}j-\frac{k_x}{m}r+\frac{K_x\omega}{m}(\nu\,k+\rho-\rho(\eta-1))^q\\
-\frac{c_y}{m}l-\frac{k_y}{m}\rho-\frac{K_y\omega}{m}(\nu\,k+\rho-\rho(\eta-1))^q
\end{bmatrix}\frac{2\pi R}{c\,r(\eta-1)-j(\eta)},\\
 k(\eta) & =  \int_{-1}^0 \frac{2\pi R}{c\,r_\eta(s-1)-j_{\eta}(s)}\mathrm{d}s, 
\end{aligned}\right.
\end{align}
which is an integral-differential  equation with both discrete and distributed delays. 

\subsection{Characteristic equation of  system~(\ref{SDDEs-system-eta})}
Let $(r^*, \rho ^*, k^*, j^*, l^*)$ be a stationary state of  system~(\ref{SDDEs-system-eta}). The the right hand sides of the differential equations lead to $j^*=l^*=0$ and
\begin{align*}
 r^*=\frac{K_xw(\nu k^*)^q}{k_x},\,\,\rho^*=-\frac{K_yw}{k_y}(\nu k^*)^q,\,\,k^*=\frac{2\pi R}{cr^*}=\left(\frac{2\pi R k_x}{cK_xw\nu^q}\right)^\frac{1}{q+1}.
\end{align*}
The unique stationary point of  system~(\ref{SDDEs-system-eta}) is
\begin{align}
(r^*, \rho ^*, k^*, j^*, l^*)=\left(\frac{K_xw(\nu k^*)^q}{k_x}, -\frac{K_{y} \omega (\nu k^*)^q}{k_{y}}, k^*, 0 , 0 \right),
\end{align} where $k^*=\left(\frac{2 \pi R }{c}\cdot\frac{k_{x}}{K_{x}\omega \nu^q}\right)^{\frac{1}{q+1}}$. For system~(\ref{SDDEs-system-eta}) setting $\textbf{x} = (x_1,\,x_2,\,x_3,\,x_4) = (r, \rho, j, l) - (r^*, \rho ^*, j^*, l^*)$,  we obtain the linearization of  system~(\ref{SDDEs-system-eta})  near the stationary state:
\begin{align}\label{linearization}
\frac{\diff \textbf{x}}{\diff \eta} = k^* (M \textbf{x} + N \textbf{x}(\eta -1))- \int_{-1}^{0} \left( \frac{{k^*}^3 \nu}{2 \pi R} P \textbf{x}_\eta (s) - \frac{{c k^*}^3 \nu}{2 \pi R} Q \textbf{x}_\eta (s-1) \right) \diff s,
\end{align}
where $M, N, P$ and $Q$ are $4 \times 4$ matrices given by
\begin{align*}
M=
\begin{bmatrix}
0 &0 &1 &0 \\ 
0 &0 &0 &1 \\ 
-\frac{k_{x}}{m} &\frac{q K_{x} \omega (\nu k^*)^{q-1}}{m} &-\frac{c_{x}}{m} &0 \\ 
0 &-\frac{k_{y}}{m}-\frac{q K_{y} \omega (\nu k^*)^{q-1}}{m} &0 &-\frac{c_{y}}{m}\\
\end{bmatrix},\quad N=
\begin{bmatrix}
0 &0 &0 &0 \\ 
0 &0 &0 &0 \\ 
0 &-\frac{q K_{x} \omega (\nu k^*)^{q-1}}{m} &0 &0\\
0 &\frac{q K_{y} \omega (\nu k^*)^{q-1}}{m} &0 &0\\
\end{bmatrix},
\end{align*}
and
\begin{equation*}
P=
\begin{bmatrix}
0 &0 &0 &0 \\ 
0 &0 &0 &0 \\
0 &0 &-\frac{q K_{x} \omega (\nu k^*)^{q-1}}{m} &0\\
0 &0 &\frac{q K_{y} \omega (\nu k^*)^{q-1}}{m} &0
\end{bmatrix},\quad
Q=
\begin{bmatrix}
0 &0 &0 &0 \\ 
0 &0 &0 &0 \\
-\frac{q K_{x} \omega (\nu k^*)^{q-1}}{m} &0 &0 &0\\
\frac{q K_{y} \omega (\nu k^*)^{q-1}}{m} &0 &0 &0 
\end{bmatrix}.
\end{equation*}
To have the stationary state of system~(\ref{SDDEs-system-eta}) the same as the corresponding system with constant spindle velocity $\Omega_0$, we require $k^*$ equal to the stationary state $\tau^*$ of $\tau$. Then by (\ref{milling-eqn-3}) and (\ref{c-v-6}) we need  the spindle control parameter $c$ to satisfy 
\begin{align}\label{virtual-spindle-speed}\frac{2 \pi}{\Omega_{0}}=\left(\frac{2 \pi R k_{x}}{c K_{x} \omega \nu^q}\right)^{\frac{1}{q+1}},
\end{align}
which leads to 
\begin{align}\label{c-value-1}
c=\frac{R\Omega_{0}^{q+1} k_{x}}{(2 \pi \nu)^q K_{x} \omega}.  \end{align}
 In the following we treat $\Omega_0>0$ satisfying  (\ref{virtual-spindle-speed}) as a virtual constant spindle speed.

We denote by $k_{r}$ the cutting force ratio $\frac{K_{y}}{K_{x}}$, by $K_{1}$ the dimensionless depth of cut $\frac{q K_{y} \omega (2 \pi R)^{q-1}}{k_{x}}$, and by $p$ the dimensionless feed per revolution $\frac{\nu}{R \Omega_{0}}$, then we have:
\begin{align}\label{parameter-simplification}
\left\{
\begin{aligned}
\frac{q K_{x} \omega (\nu k^*)^{q-1}}{m} & = \frac{k_{x}}{m} \frac{K_{1}}{k_{r}} p^{q-1},\\
 \frac{q K_{y} \omega (\nu k^*)^{q-1}}{m} & = \frac{k_{x}}{m} K_{1} p^{q-1},\\
 \frac{{k^*}^3 \nu}{2 \pi R} & = p {k^*}^2,\\
\frac{{c k^*}^3 \nu}{2 \pi R} & = \frac{k^* q k_{r}}{K_{1}} p^{1-q}.
\end{aligned}\right.
\end{align}
Thus system (\ref{linearization}) can be rewritten explicitly as follows:
\begin{align}\label{4-eqns}
\left\{
\begin{aligned}
 \dot{x}_1 = & \, k^* x_3,\\
  \dot{x}_2 = &\,  k^* x_4,\\
 \dot{x}_3 = &\,   k^* \left(-\frac{k_x}{m}x_1 -\frac{c_x}{m}x_3+ \frac{k_{x}}{m} \frac{K_{1}p^{q-1}}{k_{r}}  (x_2- x_2(\eta-1))\right)\\
& + \int_{-1}^{0} \frac{k_{x}}{m} \frac{K_{1}p^{q-1}}{k_{r}}  \left( p {k^*}^2 x_{3\eta}(s)- \frac{qk^*  k_{r}p^{1-q}}{K_{1}} x_{1\eta} (s-1) \right) \mathrm{d}s,\\
 \dot{x}_4  = &\, k^* \left(-\frac{k_y}{m}x_2 - \frac{c_y}{m}x_4 -\frac{k_{x}K_{1} p^{q-1}}{m}  (x_2- x_2(\eta-1))\right)\\
 & -\int_{-1}^{0} \frac{k_{x}K_{1}p^{q-1}}{m}  \left( p {k^*}^2 x_{3\eta}(s)- \frac{qk^* k_{r}p^{1-q}}{K_{1}} x_{1\eta} (s-1) \right) \mathrm{d}s.
\end{aligned}\right.
\end{align}
For the convenience of computing the characteristic equation, we transform system (\ref{4-eqns}) into a set of second order scalar equations of $(x_{1}, x_{2})$. Notice that $\int_{-1}^{0} x_{3 \eta}(s) \d s=\int_{-1}^{0} \frac{\dot{x}_{1 \eta}(s)}{k^*} \d s= \frac{1}{k^*} (x_1(\eta)- x_1(\eta-1))$. We differentiate the first two equations of (\ref{4-eqns}) and substitute $\dot{x}_3, \ \dot{x}_4$  to obtain:
\begin{align}\label{second-order-eqns}
\left\{
\begin{aligned}
&\ddot{x}_{1} + \frac{k^* c_{x}}{m} \dot{x}_{1} +\frac{{k^*}^2 k_{x}}{m} x_{1} -\frac{k_{x} K_{1} {k^*}^2p^q}{m k_{r}}  (x_{1}(\eta) - x_{1}(\eta -1)) +  \frac{k_{x}{k^*}^2 q}{m} \int_{-1}^{0} x_{1\eta}(s-1) \mathrm{d}s \\
 &\quad = \frac{k_{x} K_{1} {k^*}^2p^{q-1}}{m k_{r}}  (x_{2}(\eta) - x_{2}(\eta -1)), \\
&\ddot{x}_{2} + \frac{k^* c_{y}}{m} \dot{x}_{2} + \frac{{k^*}^2 k_{y}}{m} x_{2} + \frac{k_{x} K_{1} {k^*}^2p^{q-1}}{m}  (x_{2}(\eta) - x_{2}(\eta -1)) \\
& \quad =-\frac{k_{x} K_{1} {k^*}^2p^q}{m}  (x_{1}(\eta) - x_{1}(\eta -1))+ \frac{k_{x}k_r{k^*}^2 q}{m}  \int_{-1}^{0} x_{1\eta}(s-1) \mathrm{d}s.
\end{aligned}\right.
\end{align}
Assume that the tool is symmetric, namely $c_x= c_y, \ k_x= k_y$. Inspecting the equations in system (\ref{second-order-eqns}) we know that  system (\ref{second-order-eqns}) has no nonconstant solution of the form $(x_1, x_2) =(c_1, c_2) e^{\lambda \eta}$, $(c_1, \ c_2) \in\mathbb{R}^2$ with a zero component. Otherwise, the second order scalar equation $\ddot{\bf y}(\eta) + \frac{k^* c_{x}}{m} \dot{\bf y}(\eta) +\frac{{k^*}^2 k_{x}}{m} {\bf y}(\eta)=0$ has nonconstant 1-periodic solution, which is impossible because $\frac{{k^*}^2 k_{x}}{m}>0$.

By setting $(x_1, x_2) =(c_1, c_2) e^{\lambda \eta}$ with $c_1\neq 0$ and $c_2 \neq 0$ and multiplying the corresponding left and right hand sides  of the two equations at (\ref{second-order-eqns}), we obtain the characteristic equation of the linearized system
\begin{align}\label{characteristic-eqn}
\mathcal{P}(\lambda) \left( \mathcal{P}(\lambda) + \left( \frac{k_x K_1 {k^*}^2 p^{q-1}} {m}  \left( 1-\frac{p}{k_r} \right) + \frac{k_x {k^*}^2q}{m} \frac{1}{\lambda e^\lambda} \right) \left( 1-e^{-\lambda} \right) \right) = 0,
\end{align}
where $P(\lambda)=\lambda^2 +\frac{k^* c_x}{m} \lambda +\frac{{k^*}^2 k_x}{m}$.  Denote by 
\begin{align}\label{xi-delta-def}
\xi=\frac{c_xk^*}{m},\,\delta=\frac{k_x{k^*}^2}{m},\,h_1=K_1 p^{q-1}  \left( 1-\frac{p}{k_r} \right).
\end{align} 
Then the characteristic equation (\ref{characteristic-eqn}) can be rewritten as:
\begin{align*}
\mathcal{P}(\lambda) \left( \mathcal{P}(\lambda) + \delta \left(h_1 +  \frac{q}{\lambda e^\lambda} \right) \left( 1-e^{-\lambda} \right) \right) = 0,
\end{align*} where $P(\lambda)=\lambda^2+\xi\lambda+\delta$.
We have
\begin{align}\label{charac-eqn-1}
\mathcal{P}(\lambda) \left( \mathcal{P}(\lambda) + \delta\left(h_1 +  \frac{q}{\lambda e^\lambda} \right) \left( 1-e^{-\lambda} \right) \right) = 0,
\end{align} 

Notice that  the roots of the quadratic polynomial $\mathcal{P}(\lambda)$ always have negative real parts since $\xi=\frac{k^* c_x}{m}>0$ and $\delta=\frac{{k^*}^2 k_x}{m}>0$. In the following we consider the roots of 
\begin{align}\label{charac-1}
 \mathcal{P}(\lambda) + \delta \left(h_1 +  \frac{q}{\lambda e^\lambda} \right) \left( 1-e^{-\lambda} \right) =0.
\end{align}
We first notice that $\lambda=0$ is not a removable singularity of $\mathcal{P}(\lambda) + \delta\left(h_1+\frac{q}{\lambda e^\lambda`}\right) (1- e^{-\lambda})$ such that the limit there is zero since we have
\[
 \frac{\d}{\d\lambda}\lambda \left(\mathcal{P}(\lambda) + \delta\left(h_1+\frac{q}{\lambda e^\lambda}\right) (1- e^{-\lambda})\right)\,\vline_{\lambda=0}=\delta(q+1)\neq 0,
\]if $\delta\neq 0$.
Therefore, if $\lambda=i\beta$, $\beta\in\mathbb{R}$ is an eigenvalue then $\beta\neq 0$ and we have,
\begin{align}\label{real-imaginary-02}
\left\{
\begin{aligned}
 -\beta^2+\delta+\delta h_1(1-\cos\beta)-\frac{\delta q}{\beta}\sin\beta(1-2\cos\beta)  & =  0,\\
 \xi\beta+\delta h_1\sin\beta-\frac{\delta q}{\beta}(1-\cos\beta)(1+2\cos\beta)  &  =  0.
 \end{aligned}
 \right.
\end{align}

We have
\begin{lemma}\label{lemma-2-3}Suppose that $\xi$, $q$,  $h_1$ and $\delta$ are positive with $\xi\neq 2q$. If $\lambda=i\beta$, $\beta\in\mathbb{R}$  is a zero of $\mathcal{P}(\lambda) + \delta\left(h_1+\frac{q}{\lambda e^\lambda}\right) (1- e^{-\lambda})$, then $\beta\neq 0$ and the following statements are true:
\begin{enumerate}\item[i$\,)$] If $h_1=0$, then $0<\beta^2<\frac{9\delta q}{8\xi}$; If $h_1\neq 0$, then
\[
0<\beta^2\leq \frac{\delta |h_1|+2\delta h_1^2+2q\xi+\sqrt{(\delta |h_1|+2\delta h_1^2+2q\xi)^2+8 q^2\delta |h_1|}}{2 |h_1|},
\] If, in addition, $(\delta |h_1|-2\delta h_1^2-{2q\xi})^2-4q^2\delta |h_1|\geq 0$, then
\[
 \beta^2\geq\max\left\{0,\, \frac{\delta |h_1|-2\delta h_1^2-{2q\xi}+\sqrt{(\delta |h_1|-2\delta h_1^2-{2q\xi})^2-4q^2\delta |h_1|}}{2 |h_1|}\right\}.
\]
\item[ ii$\,)$]We have
\begin{align}\label{lemma-2-3-1}\left\{
\begin{aligned}
h_1 = &\frac{q\beta(1-\cos\beta)(1+2\cos\beta)-\xi\beta+q\xi\sin\beta(1-2\cos\beta)}{\xi\beta(1-\cos\beta)+\beta^2\sin\beta},\\[0.25em]
\delta = & \frac{\xi\beta^2(1-\cos\beta)+\beta^3\sin\beta}{2q\cos\beta(1-\cos\beta)+\beta\sin\beta}.
\end{aligned}\right.
 \end{align}  
 
 \item[iii$\,)$] $\beta\neq 2n\pi$ for every $n\in\mathbb{Z}$. 
\end{enumerate}
\end{lemma}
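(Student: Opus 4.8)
The plan is to work directly with the two real scalar equations in (\ref{real-imaginary-02}), exploiting that after clearing the factor $1/\beta$ every transcendental quantity appearing is $\cos\beta$, $\sin\beta$, or a polynomial in them; this lets me eliminate one of $\sin\beta,\cos\beta$ and then estimate with $|\cos\beta|\le 1$, $|\sin\beta|\le 1$ and $\sin^2\beta+\cos^2\beta=1$. Throughout I may assume $\beta>0$, since the first equation of (\ref{real-imaginary-02}) is even in $\beta$ (both $\cos\beta$ and $\tfrac{\sin\beta}{\beta}$ are even) while the second is odd, so $\beta$ is a zero iff $-\beta$ is, and $\beta\neq 0$ is already recorded. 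I would prove the three items in the order iii), ii), i), the last being the technical heart.

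For iii), I substitute a putative zero $\beta=2n\pi$, so $\cos\beta=1$, $\sin\beta=0$; the first equation of (\ref{real-imaginary-02}) collapses to $-\beta^2+\delta=0$ and the second to $\xi\beta=0$, which forces $\beta=0$ since $\xi>0$, contradicting $\beta\neq 0$. For ii), the key observation is that (\ref{real-imaginary-02}) is \emph{linear} in the pair $(\delta,\ \delta h_1)$:
\begin{align*}
\Big(1-\tfrac{q\sin\beta(1-2\cos\beta)}{\beta}\Big)\,\delta+(1-\cos\beta)\,(\delta h_1) &= \beta^2,\\
-\tfrac{q(1-\cos\beta)(1+2\cos\beta)}{\beta}\,\delta+\sin\beta\,(\delta h_1) &= -\xi\beta.
\end{align*}
I solve this $2\times2$ system by Cramer's rule. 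The determinant simplifies using the identity
\[
(1-\cos\beta)^2(1+2\cos\beta)-\sin^2\beta\,(1-2\cos\beta)=2\cos\beta(1-\cos\beta),
\]
obtained by writing $\sin^2\beta=(1-\cos\beta)(1+\cos\beta)$ and factoring out $1-\cos\beta$, so that $\det=\sin\beta+\tfrac{2q\cos\beta(1-\cos\beta)}{\beta}$. Clearing $\beta$ from numerators and denominator then yields exactly the closed forms (\ref{lemma-2-3-1}) for $\delta$ and for $h_1=(\delta h_1)/\delta$.

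Finally, for i), I multiply both equations of (\ref{real-imaginary-02}) by $\beta$ to clear denominators, obtaining
\[
\delta q\sin\beta(1-2\cos\beta)=\beta\big(-\beta^2+\delta+\delta h_1(1-\cos\beta)\big),\qquad \delta h_1\beta\sin\beta=\delta q(1-\cos\beta)(1+2\cos\beta)-\xi\beta^2.
\]
When $h_1=0$ the second equation gives $\beta^2=\tfrac{\delta q}{\xi}(1-\cos\beta)(1+2\cos\beta)$; maximizing $c\mapsto(1-c)(1+2c)$ over $[-1,1]$ (maximum $\tfrac98$ at $c=\tfrac14$) gives $\beta^2\le\tfrac{9\delta q}{8\xi}$, which I upgrade to a strict inequality by showing the equality case $\cos\beta=\tfrac14$ is excluded by the first equation together with the standing hypothesis $\xi\neq 2q$. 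When $h_1\neq 0$ I eliminate $\sin\beta$ between the two displayed equations and estimate the remaining $\cos\beta$-dependent terms via $1-\cos\beta\le 2$ and $|\sin\beta|\le 1$, keeping $\sin^2\beta+\cos^2\beta=1$ so as not to overcount. This collapses to the quadratic inequality $|h_1|(\beta^2)^2-(\delta|h_1|+2\delta h_1^2+2q\xi)\beta^2-2q^2\delta\le 0$ in $\beta^2$, whose positive root is the stated upper bound; the reverse estimate produces the companion quadratic $|h_1|(\beta^2)^2-(\delta|h_1|-2\delta h_1^2-2q\xi)\beta^2+q^2\delta\ge 0$, giving the lower bound precisely when its discriminant $(\delta|h_1|-2\delta h_1^2-2q\xi)^2-4q^2\delta|h_1|$ is nonnegative.

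The main obstacle I anticipate is exactly this last estimation. A term-by-term bound $|\cos\beta|\le 1$ on the eliminated relation is too lossy: it is formally extremized at $\cos\beta=-1$, a value the second equation forbids (there $\sin\beta=0$ forces $\xi\beta^2=-2\delta q<0$). Hence the estimate must be organized so that it respects the fact that a genuine root satisfies \emph{both} equations simultaneously, and recovering the sharp constants $2\delta h_1^2$, $2q\xi$, $2q^2\delta$ (rather than the crude ones) requires pairing the $\sin\beta$- and $\cos\beta$-contributions through $\sin^2\beta+\cos^2\beta=1$ before bounding.
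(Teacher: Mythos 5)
Your reduction to $\beta>0$, your argument for iii), and your Cramer's-rule derivation of ii) are sound in outline: the determinant identity $(1-\cos\beta)^2(1+2\cos\beta)-\sin^2\beta(1-2\cos\beta)=2\cos\beta(1-\cos\beta)$ is correct, and the Cramer numerators do reproduce (\ref{lemma-2-3-1}). (The paper instead solves the system sequentially: first equation for $\delta h_1$, substitute into the second, solve for $\delta$, back-substitute.) However, your route for ii) omits the nondegeneracy step it cannot do without: Cramer's rule needs the determinant $\sin\beta+\frac{2q}{\beta}\cos\beta(1-\cos\beta)$ to be nonzero, and recovering $h_1=(\delta h_1)/\delta$ additionally needs $\xi\beta(1-\cos\beta)+\beta^2\sin\beta\neq0$. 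This is precisely where the paper's hypotheses enter: it shows that $2q\cos\beta(1-\cos\beta)+\beta\sin\beta=0$ would force $\xi=2q\cos\beta$ and then $\xi=2q$, contradicting $\xi\neq2q$, while $\delta>0$ rules out the vanishing of the other denominator. Your proposal never uses $\xi\neq 2q$ in part ii), so the formulas are not actually established as stated.

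The more serious gap is part i) for $h_1\neq0$, which you correctly identify as the technical heart but do not carry out. Solving the second equation of (\ref{real-imaginary-02}) for $\sin\beta$ and substituting into the first gives, after clearing denominators,
\[
h_1\beta^2=h_1\delta+\delta h_1^2(1-\cos\beta)+q\xi(1-2\cos\beta)-\frac{\delta q^2}{\beta^2}(1-\cos\beta)(1-4\cos^2\beta),
\]
and term-by-term bounds on $[-1,1]$ give $q\xi(1-2\cos\beta)\le 3q\xi$ and $(1-\cos\beta)(1-4\cos^2\beta)\ge -6$, i.e.\ a quadratic in $\beta^2$ with coefficients $3q\xi$ and $6q^2\delta$ rather than the $2q\xi$ and $2q^2\delta$ of the statement. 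The stated constants are tied to a different, specifically chosen combination: the paper adds $\frac{q(1-\cos\beta)}{\beta h_1}$ times the second equation to the first, so that the $\xi$-term carries the factor $(1-\cos\beta)\in[0,2]$ and the $q^2$-term becomes $u^2(3-2u)$ with $u=1-\cos\beta$, which is then bounded on $[0,2]$. Your appeal to ``pairing contributions through $\sin^2\beta+\cos^2\beta=1$'' does not say how to recover those constants, so as written the proposal does not prove the inequalities of i). Finally, your claim that the equality case $\cos\beta=\frac14$ in the $h_1=0$ bound is excluded by $\xi\neq2q$ is asserted without argument; at $\cos\beta=\frac14$ the first equation only relates $\delta$ to $\beta$, $\xi$ and $q$ and does not visibly force $\xi=2q$, so the strictness claim would need a genuine proof (the paper itself does not supply one).
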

\begin{proof} i) If $h_1=0$, by  the second equation of (\ref{real-imaginary-02}) we have
\[
 \beta^2=\frac{\delta q}{\xi}(-2\cos^2 \beta+\cos\beta+1).
\]Evaluating the  quadratic polynomial of $\cos\beta$ at the right hand side in $[-1,\,1]$ we obtain that
\[
 0<\beta^2<\frac{9\delta q}{8\xi}.
\]
If $h_1\neq 0$ we multiply  both sides of the second equation of (\ref{real-imaginary-02}) by $\frac{q(1-\cos\beta)}{\beta h_1}$ and add the resulted sides to the first equation, and obtain,
 \begin{align*}
  -\beta^2+\delta+\delta h_1(1-\cos\beta)+\frac{q\xi}{h_1}(1-\cos\beta)-\frac{q^2\delta}{\beta^2h_1}(1-\cos\beta)^2(3-2(1-\cos\beta))=0.
 \end{align*} Notice that the term $(1-\cos\beta)^2(3-2(1-\cos\beta))$ assumes maximum $1$ when $1-\cos\beta=1$ and minimum $-2$ at $1-\cos\beta=2$.
We have the inequalities
\[
 \delta-2\delta |h_1|-\frac{2q\xi}{|h_1|}-\frac{q^2\delta}{\beta^2|h_1|}\leq \beta^2\leq \delta+2\delta |h_1|+\frac{2q\xi}{|h_1|}+\frac{2q^2\delta}{\beta^2 |h_1|},
\]
the latter of which lead to
\begin{align*}
0<\beta^2\leq \frac{\delta |h_1|+2\delta h_1^2+{2q\xi}+\sqrt{(\delta |h_1|+2\delta h_1^2+{2q\xi})^2+8 q^2\delta |h_1|}}{2 |h_1|}.
\end{align*}Moreover, if $(\delta |h_1|-2\delta h_1^2-{2q\xi})^2-4q^2\delta |h_1|\geq 0$, we have
\[
 \beta^2\geq\max\left\{0,\, \frac{\delta |h_1|-2\delta h_1^2-{2q\xi}+\sqrt{(\delta |h_1|-2\delta h_1^2-{2q\xi})^2-4q^2\delta |h_1|}}{2 |h_1|}\right\}.
\]

ii) By the first equation of (\ref{real-imaginary-02}), we have
\begin{align}\label{lemma2-3-1-h2}
 \delta h_1=\frac{\beta^2-\delta+\frac{\delta q}{\beta}\sin\beta(1-2\cos\beta)}{1-\cos\beta},
\end{align} with which the second  equation of (\ref{real-imaginary-02}) becomes
\begin{align}\label{real-imaginary-03}
 \xi\beta+\sin\beta\left(\frac{\beta^2-\delta+\frac{\delta q}{\beta}\sin\beta(1-2\cos\beta)}{1-\cos\beta}\right)-\frac{\delta q}{\beta}(1-\cos\beta)(1+2\cos\beta)   =  0.
\end{align}
From (\ref{real-imaginary-03}) we obtain
\begin{align}\label{real-imaginary-04}
 \delta q=\frac{1}{2}\left(\xi\beta^2+\frac{\beta(\beta^2-\delta)\sin\beta}{1-\cos\beta}\right).
\end{align}Solving for $\delta$ from (\ref{real-imaginary-04}) we have
\begin{align}\label{real-imaginary-05}
 \left({2q\cos\beta(1-\cos\beta)+\beta\sin\beta}\right)\delta=\xi\beta^2(1-\cos\beta)+\beta^3\sin\beta.
\end{align}
Notice that if ${2q\cos\beta(1-\cos\beta)+\beta\sin\beta}=0$, (\ref{real-imaginary-05}) becomes $0=\frac{1}{2}\xi\beta^2-q\beta^2\cos\beta$ and hence $\xi=2q\cos\beta$. (\ref{real-imaginary-04}) is then reduced to
\[
 \delta q=\frac{1}{2}\left(\xi\beta^2-(\beta^2-\delta)(-2q\cos\beta)\right)=\frac{1}{2}\left(\xi\beta^2-(\beta^2-\delta)\xi\right)=\frac{1}{2}\xi\delta.
\]
That is, $\xi=2q$, which is a contradiction to the assumption. Therefore, we obtain ${2q\cos\beta(1-\cos\beta)+\beta\sin\beta}\neq 0$ and (\ref{real-imaginary-05}) implies that
\begin{align}\label{real-imaginary-06}
 \delta= \frac{\xi\beta^2(1-\cos\beta)+\beta^3\sin\beta}{2q\cos\beta(1-\cos\beta)+\beta\sin\beta}.
\end{align}
 Next we solve for $h_1$. Note that $\delta>0$ and hence $\xi(1-\cos\beta)+\beta\sin\beta\neq 0$. We bring $\delta$ at (\ref{real-imaginary-06}) into (\ref{lemma2-3-1-h2}) to obtain
\begin{align}\label{real-imaginary-07}
\begin{aligned}
 h_1=&\frac{q\beta(1-\cos\beta)(1+2\cos\beta)-\xi\beta+q\xi\sin\beta(1-2\cos\beta)}{\xi\beta(1-\cos\beta)+\beta^2\sin\beta}.
  \end{aligned}
\end{align}

iii)  Notice that by the second equation of (\ref{real-imaginary-02}), $1-\cos\beta\neq 0$, otherwise, $\sin\beta=0$ and hence $\xi\beta=0$ which is impossible. Therefore, we have $\beta\neq 2n\pi$ for every $n\in\mathbb{Z}$.
\qed
\end{proof}
\begin{remark}
With the estimates of $\beta$ at i) of Lemma~\ref{lemma-2-3} we can provide an approximation  of the frequency of vibrations by $\frac{\beta}{2\pi}$. 
\end{remark}
\begin{lemma}\label{lemma-2-4}Assume that $\xi,\,h_1$ and $\delta$ are positive constants. If $\beta\in\mathbb{R}$ is such that ii) and iii) of Lemma~\ref{lemma-2-3} are satisfied, then $i\beta$ is a purely imaginary zero of
of $\mathcal{P}(\lambda) + \delta\left(h_1+\frac{q}{\lambda e^\lambda}\right) (1- e^{-\lambda})$.
 \end{lemma}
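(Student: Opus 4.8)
The plan is to recognize Lemma~\ref{lemma-2-4} as the converse of Lemma~\ref{lemma-2-3}, and to establish it by running the chain of equivalences from the proof of Lemma~\ref{lemma-2-3} backwards. First I would record that, since iii) gives $\beta\neq 2n\pi$ for all $n\in\mathbb{Z}$, we have both $\beta\neq 0$ and $1-\cos\beta\neq 0$; in particular $\lambda=i\beta$ avoids the singularity at the origin, so the function $\mathcal{P}(\lambda)+\delta\left(h_1+\frac{q}{\lambda e^\lambda}\right)(1-e^{-\lambda})$ is defined there. Substituting $\lambda=i\beta$ and separating real and imaginary parts, one checks (this is a short direct computation) that the vanishing of this function at $i\beta$ is equivalent to the simultaneous validity of the two equations in (\ref{real-imaginary-02}). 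Thus it suffices to verify that the $\delta$ and $h_1$ prescribed by ii) satisfy both equations of (\ref{real-imaginary-02}).

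The conceptual reason this must succeed is that, viewed as equations in the unknowns $(\delta,\,\delta h_1)$, the system (\ref{real-imaginary-02}) is \emph{linear}, and the formulas in ii) are precisely its unique solution, so substituting them back satisfies the system by construction. Concretely I would argue in two steps. For the first equation of (\ref{real-imaginary-02}): since $1-\cos\beta\neq 0$, it is equivalent to (\ref{lemma2-3-1-h2}); but the expression (\ref{real-imaginary-07}) for $h_1$ in ii) was obtained exactly by inserting the expression (\ref{real-imaginary-06}) for $\delta$ into (\ref{lemma2-3-1-h2}), so (\ref{lemma2-3-1-h2})---and hence the first equation---holds. For the second equation: granting the first (equivalently (\ref{lemma2-3-1-h2})), the second equation is equivalent to (\ref{real-imaginary-03}), which simplifies to (\ref{real-imaginary-04}) and, upon solving the resulting linear relation for $\delta$, to (\ref{real-imaginary-06}); since $\delta$ is prescribed by (\ref{real-imaginary-06}), the second equation follows. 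Combining the two steps shows both equations of (\ref{real-imaginary-02}) hold, so $i\beta$ with $\beta\neq 0$ real is a purely imaginary zero.

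The step requiring the most care is the justification that each manipulation above is reversible, i.e. that no hidden division by zero occurs. The relevant denominators are $\beta$ and $1-\cos\beta$, nonzero by iii), together with $2q\cos\beta(1-\cos\beta)+\beta\sin\beta$ appearing in (\ref{real-imaginary-06}) and $\xi(1-\cos\beta)+\beta\sin\beta$ appearing in the denominator of (\ref{real-imaginary-07}). Here I would stress that, unlike in Lemma~\ref{lemma-2-3}, no separate hypothesis $\xi\neq 2q$ is needed: because ii) is assumed to define $\delta$ and $h_1$ as genuine positive constants, the denominator in (\ref{real-imaginary-06}) is automatically nonzero, and the relation (\ref{real-imaginary-05}) together with $\delta>0$ forces $\xi(1-\cos\beta)+\beta\sin\beta\neq 0$. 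With these nonvanishing facts in hand the chain of equivalences runs in both directions, which completes the argument.
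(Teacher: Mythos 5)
Your proposal is correct and follows essentially the same route as the paper's proof: reverse the chain of manipulations from Lemma~\ref{lemma-2-3}, recovering (\ref{lemma2-3-1-h2}) from the formula for $h_1$ to get the first equation of (\ref{real-imaginary-02}), and recovering (\ref{real-imaginary-04})--(\ref{real-imaginary-03}) from the formula for $\delta$ to get the second. Your added attention to the nonvanishing of the denominators (guaranteed by iii) and by $\delta>0$ once the formulas in ii) are assumed well-defined) makes explicit what the paper's terse ``working backward'' leaves implicit, but it is the same argument.
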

\begin{proof}
 We show that (\ref{real-imaginary-02}) is true. Working backward from $h_1$ at (\ref{real-imaginary-07}), we have (\ref{lemma2-3-1-h2}) which leads to the first equation of (\ref{real-imaginary-02}). Similarly we working backward from $\delta$ at (\ref{real-imaginary-06}) we obtain (\ref{real-imaginary-04}) and (\ref{real-imaginary-03}) which combined with 
(\ref{lemma2-3-1-h2}) give the second equation of (\ref{real-imaginary-02}). \qed \end{proof}

In the following, we are interested  to obtain a parameterization of all possible  values of $(h_1,\,\delta)$ for which  $\mathcal{P}(\lambda) + \delta\left(h_1+\frac{q}{\lambda e^\lambda}\right) (1- e^{-\lambda})$ has purely imaginary zeros.
By the definitions of $h_1$ and $\delta$ and by Lemma~\ref{lemma-2-4},  we need to parameterize positive $\delta$ and $h_1\in\mathbb{R}$ satisfying
ii) and iii) of Lemma~\ref{lemma-2-3}. Also, we note that from ii) of Lemma~\ref{lemma-2-3} that $(h_1,\,\delta)$ is an even function of $\beta$ on its domain. Hence we assume $\beta>0$ in the following. For simplicity, we denote $I_n=(2(n-1),\,2n\pi),\,n\in\mathbb{N}.$
\begin{lemma}\label{lemma-2-5}
Let $\xi$ and $q$ be positive with $\xi\neq 2q$.  For every $n\in\mathbb{N}$, the equation $\xi(1-\cos\beta)+\beta\sin\beta=0$ has a unique zero $\beta_n^*\in ((2n-1)\pi,\,2n\pi)\subset I_n$. 
\end{lemma}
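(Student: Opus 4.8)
The plan is to study the function $g(\beta) = \xi(1-\cos\beta) + \beta\sin\beta$ on $[(2n-1)\pi,\,2n\pi]$ and to localize its zeros by a half-angle substitution that converts the problem into one about the strictly monotone tangent function. First I would record the boundary behaviour: since $\cos((2n-1)\pi) = -1$ and $\sin((2n-1)\pi) = 0$, we have $g((2n-1)\pi) = 2\xi > 0$, whereas expanding near the right endpoint with $\beta = 2n\pi - \epsilon$ gives
\[
 g(\beta) = \tfrac{\xi}{2}\epsilon^2 - (2n\pi - \epsilon)\epsilon = -2n\pi\,\epsilon + O(\epsilon^2) < 0
\]
for small $\epsilon > 0$; note $g(2n\pi) = 0$ exactly, so working on the open interval is essential. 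By the intermediate value theorem $g$ has at least one zero in $((2n-1)\pi,\,2n\pi)$, and the remaining work is to prove it is unique.

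For uniqueness I would use the identities $1 - \cos\beta = 2\sin^2(\beta/2)$ and $\sin\beta = 2\sin(\beta/2)\cos(\beta/2)$ to factor
\[
 g(\beta) = 2\sin(\beta/2)\bigl(\xi\sin(\beta/2) + \beta\cos(\beta/2)\bigr).
\]
On $\beta \in ((2n-1)\pi,\,2n\pi)$ the half-angle $\beta/2$ ranges over $((n-\tfrac12)\pi,\,n\pi)$, where $\sin(\beta/2)$ never vanishes; hence the zeros of $g$ coincide with those of $\phi(\beta) := \xi\sin(\beta/2) + \beta\cos(\beta/2)$. Since $\cos(\beta/2)$ also has no zero on this interval—its zeros occur only at half-integer multiples of $\pi$, namely the excluded endpoint $(2n-1)\pi$—I may divide by $\cos(\beta/2)$, and setting $u = \beta/2$ reduces the equation to $\psi(u) := \tan u + (2/\xi)u = 0$ on $u \in ((n-\tfrac12)\pi,\,n\pi)$.

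The conclusion is then immediate: on the tangent branch $((n-\tfrac12)\pi,\,(n+\tfrac12)\pi)$ containing this interval, $\tan u$ is strictly increasing, and $(2/\xi)u$ is strictly increasing because $\xi > 0$, so $\psi$ is strictly increasing and has at most one zero. Together with the sign change $\psi(u) \to -\infty$ as $u \to (n-\tfrac12)\pi^+$ and $\psi(u) \to 2n\pi/\xi > 0$ as $u \to n\pi^-$, this gives exactly one zero $u_n$, whence $\beta_n^* = 2u_n$ is the unique zero claimed. I expect no serious obstacle here; the only points requiring care are the bookkeeping of the signs of $\sin(\beta/2)$ and $\cos(\beta/2)$ and the location of the tangent poles across different parities of $n$, all of which the half-angle reduction handles uniformly. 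Finally, it is worth remarking that the hypothesis $\xi \neq 2q$ plays no role in this statement, since $q$ does not appear in the equation; it is carried along only for consistency with the surrounding lemmas.
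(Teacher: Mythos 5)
Your argument is essentially the same reduction the paper uses: the paper divides the equation by $\beta(1-\cos\beta)$ and works with the strictly decreasing function $\frac{\xi}{\beta}+\frac{\sin\beta}{1-\cos\beta}=\frac{\xi}{\beta}+\cot\frac{\beta}{2}$, while you divide by $2\sin\frac{\beta}{2}\cos\frac{\beta}{2}$ and work with the strictly increasing $\tan\frac{\beta}{2}+\frac{\beta}{\xi}$; these are the same equation up to a reciprocal, and your monotonicity and endpoint-sign computations on $((2n-1)\pi,\,2n\pi)$ are correct. You are also right that the hypothesis $\xi\neq 2q$ is not used.

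There is, however, one genuine omission. The lemma is invoked later (Lemma~\ref{lemma-3-3-2}, Theorems~\ref{para-delta-1} and \ref{delta-parameterization}) with $\beta_n^*$ described as \emph{the} unique zero of $\xi(1-\cos\beta)+\beta\sin\beta$ in all of $I_n=(2(n-1)\pi,\,2n\pi)$, and the sign analysis of $\delta$ there relies on there being no other sign change in $I_n$. Your proof establishes existence and uniqueness only on the right half $((2n-1)\pi,\,2n\pi)$ and never examines $(2(n-1)\pi,\,(2n-1)\pi]$; the paper's version avoids this because $\cot\frac{\beta}{2}$ is finite and monotone across the whole of $I_n$, whereas your $\tan\frac{\beta}{2}$ has a pole at the midpoint $(2n-1)\pi$, which forces the restriction. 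The gap is easy to close with your own machinery: for $\beta\in(2(n-1)\pi,\,(2n-1)\pi)$ one has $\beta/2\in((n-1)\pi,\,(n-\tfrac12)\pi)$, where $\tan\frac{\beta}{2}>0$ and $\frac{\beta}{\xi}>0$, so $\psi>0$ and there is no zero there (and $g((2n-1)\pi)=2\xi\neq 0$ handles the midpoint). You should add that one line so that the uniqueness claim covers all of $I_n$.
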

\begin{proof}
 Note that
  \begin{align}\label{zero-01}
   \frac{\d}{\d \beta}\left(\frac{\xi}{\beta}+\frac{\sin\beta}{1-\cos\beta}\right)= -\frac{\xi}{\beta^2}-\frac{1}{(1-\cos\beta)}<0,
  \end{align}
and that $\lim_{\beta\rightarrow (2n\pi)^-} \frac{\xi}{\beta}+\frac{\sin\beta}{1-\cos\beta}=+\infty$, $\lim_{\beta\rightarrow (2(n-1)\pi)^+} \frac{\xi}{\beta}+\frac{\sin\beta}{1-\cos\beta}=-\infty$.
It follows that the equation  $\xi(1-\cos\beta)+\beta\sin\beta=0$ has a unique zero $\beta_n^*$  in $(2(n-1)\pi,\,2n\pi)$. Notice that by the equation we have $\frac{\sin\beta_n^*}{1-\cos\beta_n^*}=-\frac{\xi}{\beta_n^*}<0$. Therefore,  we have
that $\beta_n^*\in((2n-1)\pi,\,(2n-1)\pi)$. \qed
\end{proof}
\begin{lemma}\label{lemma-2-6}
Let $\xi$ and $q$ be positive with $\xi\neq 2q$, $\beta_1=2n\pi-\arccos\frac{-1+\sqrt{5}}{2}$. For every $n\in\mathbb{N}$, consider the zeros of the equation $2q\cos\beta(1-\cos\beta)+\beta\sin\beta=0$ in the interval $(2(n-1),\,2n\pi)$. The following are true:
\begin{enumerate}
\item[$i)$]If $q\geq\beta_1\frac{\sqrt{2\sqrt{5}-2}}{4\sqrt{5}-8}$, there are exactly three zeros $\tilde{\beta}_{n,\,1}<\tilde{\beta}_{n,\,2}\leq \tilde{\beta}_{n,\,3}$ and $\tilde{\beta}_{n,\,1}\in \left((2n-\frac{3}{2})\pi,\,(2n-1)\pi\right)$,\,$\tilde{\beta}_{n,\,2}\in \left((2n-\frac{1}{2})\pi,\,2n\pi\right)$ and $\tilde{\beta}_{n,\,3}\in \left((2n-\frac{1}{2})\pi,\,2n\pi\right)$.
\item[$ii)$] If $0<q<\beta_1\frac{\sqrt{2\sqrt{5}-2}}{4\sqrt{5}-8}$, there is a unique zero $\tilde{\beta}_{n,\,1}\in \left((2n-\frac{3}{2})\pi,\,(2n-1)\pi\right)$.
\end{enumerate}
\end{lemma}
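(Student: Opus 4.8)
\section*{Proof proposal}

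The plan is to recast the zero-counting as a level-set problem. Since
$g(\beta):=2q\cos\beta(1-\cos\beta)+\beta\sin\beta$ factors as
$g(\beta)=\cos\beta\,(1-\cos\beta)\bigl(2q-\Psi(\beta)\bigr)$ with
$\Psi(\beta):=\dfrac{-\beta\sin\beta}{\cos\beta(1-\cos\beta)}$, on the open interval $I_n$ the zeros of $g$ away from the points where $\cos\beta(1-\cos\beta)=0$ are exactly the solutions of $2q=\Psi(\beta)$. I would first partition $I_n$ by the points $(2n-\tfrac32)\pi$, $(2n-1)\pi$, $(2n-\tfrac12)\pi$ into four quarter-intervals, check directly that $g\neq0$ at these partition points, and determine the sign of $\Psi$ on each quarter: using $\sin\beta=-\sin(2n\pi-\beta)$ and the corresponding reductions for $\cos\beta$, one finds $\Psi<0$ on the first and third quarters, so that (as $q>0$) these contribute no solutions, while $\Psi>0$ on the second and fourth quarters.

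Next I would treat the two quarters where solutions can occur. On the second quarter $((2n-\tfrac32)\pi,(2n-1)\pi)$ I expect $\Psi$ to run monotonically from $+\infty$ at the left endpoint down to $0^+$ at the right endpoint; showing $\Psi'<0$ there, by the same kind of derivative estimate used in the proof of Lemma~\ref{lemma-2-5}, gives a unique solution $\tilde\beta_{n,1}$ for every $q>0$, which accounts for the zero present in both cases. On the fourth quarter $((2n-\tfrac12)\pi,2n\pi)$, the substitution $\theta=2n\pi-\beta\in(0,\pi/2)$ turns the equation into $2q=\phi(\theta)$ with $\phi(\theta)=\dfrac{(2n\pi-\theta)\sin\theta}{\cos\theta(1-\cos\theta)}$, and a short computation shows $\phi(\theta)\to+\infty$ as $\theta\to0^+$ and as $\theta\to(\pi/2)^-$. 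Hence $\phi$ attains an interior minimum, and the number of solutions of $2q=\phi(\theta)$ is $0$, $1$, or $2$ according as $2q$ lies below, at, or above $\min\phi$.

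To locate the minimum and extract the threshold I would set $\phi'(\theta)=0$. The critical-point equation, after clearing denominators and using $\sin^2\theta=(1-\cos\theta)(1+\cos\theta)$, reduces to $\cos^2\theta+\cos\theta-1=0$, whose root in $(0,\pi/2)$ is $\cos\theta^{\ast}=\frac{-1+\sqrt5}{2}=:c$; this is exactly the point $\beta_1=2n\pi-\arccos c$. Writing $s:=\sin\theta^{\ast}$, the defining relation $c^2=1-c$ gives $s^2=1-c^2=c$ and $1-c=c^2$, and together with the identity $c^3=2c-1$ these collapse $\phi(\theta^{\ast})=\dfrac{\beta_1 s}{c(1-c)}=\dfrac{\beta_1}{c^{5/2}}$ into $2\beta_1\dfrac{\sqrt{2\sqrt5-2}}{4\sqrt5-8}$. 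Comparing $2q$ with this value then separates the two cases: $q\ge\beta_1\frac{\sqrt{2\sqrt5-2}}{4\sqrt5-8}$ yields two solutions on the fourth quarter (merging at the threshold, which is why $\tilde\beta_{n,2}\le\tilde\beta_{n,3}$) and hence three zeros in total, while $0<q<\beta_1\frac{\sqrt{2\sqrt5-2}}{4\sqrt5-8}$ yields none there and only the single zero $\tilde\beta_{n,1}$ overall. Finally I would read off the claimed locations of $\tilde\beta_{n,2},\tilde\beta_{n,3}$ from their being the two $\phi$-preimages on either side of the interior minimizer, both inside $((2n-\tfrac12)\pi,2n\pi)$.

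The main obstacle is the global shape analysis of $\phi$ (equivalently $\Psi$): the reduced equation $\cos^2\theta+\cos\theta-1=0$ identifies the critical point only after the term $-\tfrac{1}{2n\pi-\theta}$, coming from differentiating the factor $2n\pi-\theta$, is set aside, so the honest task is to prove that $\phi$ has a unique interior critical point (a genuine minimum) on $(0,\pi/2)$ and that this minimizer is $\beta_1$, or, at large $n$, to control the $O(1/n)$ discrepancy between the true minimizer and the golden-ratio point $\theta^{\ast}$. I would attack unimodality by showing that the relevant auxiliary quotient is strictly monotone, mirroring the sign argument for $\frac{\d}{\d\beta}\bigl(\frac{\xi}{\beta}+\frac{\sin\beta}{1-\cos\beta}\bigr)$ in Lemma~\ref{lemma-2-5}; establishing that single-crossing property, rather than the endpoint asymptotics or the threshold algebra, is where the real work lies.
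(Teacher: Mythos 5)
Your recasting of the problem as the level set $2q=\Psi(\beta)$ with $\Psi(\beta)=\frac{-\beta\sin\beta}{\cos\beta(1-\cos\beta)}$ is the paper's own computation in disguise (the paper intersects $f(\beta)=\frac{\sin\beta}{\cos\beta(1-\cos\beta)}$ with $g(\beta)=-2q/\beta$, and $\Psi=-\beta f$), and your quarter-by-quarter sign analysis and second-quarter argument match the paper's; indeed on $((2n-\frac{3}{2})\pi,(2n-1)\pi)$ your expected monotonicity is provable, since $\Psi'=-\frac{\beta(1-\cos\beta-\cos^2\beta)+\cos\beta\sin\beta}{\cos^2\beta(1-\cos\beta)}$ and there the bracket in the numerator is at least $\beta-\frac{1}{2}>0$. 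The real issue is the fourth quarter, and the obstacle you flag at the end is not a closable technicality: the minimizer of $\phi$ is provably \emph{not} the golden-ratio point. Since $f'(\beta_1)=0$ and $f(\beta_1)<0$, one has $\Psi'(\beta_1)=-f(\beta_1)-\beta_1 f'(\beta_1)=-f(\beta_1)>0$, so $\Psi$ is strictly increasing at $\beta_1$; its minimizer on $((2n-\frac{1}{2})\pi,2n\pi)$ lies strictly to the left of $\beta_1$ (where $1-\cos\beta-\cos^2\beta=\cos\beta\,|\sin\beta|/\beta>0$), and $\min\phi<\phi(\theta^{\ast})=2\beta_1\frac{\sqrt{2\sqrt{5}-2}}{4\sqrt{5}-8}$ strictly. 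Your own counting principle then yields at least two solutions on the fourth quarter for every $2q\in(\min\phi,\;\phi(\theta^{\ast}))$, a nonempty range on which part ii) of the lemma asserts there are none. So your plan, carried out honestly, cannot establish the stated threshold; it instead shows the threshold is displaced by an $O(1/n)$ amount. (Numerically for $n=1$: $\min\Psi\approx17.83$ near $\beta\approx5.32$, versus $\Psi(\beta_1)\approx17.91$; for $q\approx8.93$ the equation has three zeros in $I_1$, not one.)

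For comparison, the paper anchors the dichotomy at the maximum $\beta_1$ of $f$ alone. Its sufficiency direction (part i)) is sound, but part ii) rests on the displayed inequality (\ref{fg-beta1-3}), whose derivation has a sign error: putting $\frac{\cos\beta_0\sin\beta_0}{1-\cos\beta_0-\cos^2\beta_0}+\beta_0$ over a common denominator gives numerator $\beta_0(1-\cos\beta_0-\cos^2\beta_0)+\cos\beta_0\sin\beta_0$, not $\beta_0(\cos\beta_0-\sin^2\beta_0)+\cos\beta_0\sin\beta_0$, and with the correct sign $\frac{\d}{\d y}(f^{-1}-g^{-1})$ is negative at crossings away from $\beta_1$ and positive only near $\beta_1$ --- exactly the up-crossing/down-crossing pair your level-set picture predicts. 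In short, the single-crossing property you correctly identify as ``where the real work lies'' is precisely what fails, both in your outline and in the paper's proof; what survives is the unique zero $\tilde\beta_{n,1}$ for all $q>0$, the existence of exactly two further zeros when $q\ge\beta_1\frac{\sqrt{2\sqrt{5}-2}}{4\sqrt{5}-8}$, and the absence of further zeros only under the strictly weaker condition $2q<\min\phi$, which has no closed form in terms of $\arccos\frac{-1+\sqrt{5}}{2}$.
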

\begin{proof}
 i) Note that $\beta\neq 0$ and $\cos\beta\neq 0$ and $\cos\beta\neq 1$ for every $\beta\in I_n=(2(n-1)\pi,\,2n\pi)$. Consider functions $f: I_n\setminus\{(2n-\frac{3}{2})\pi,\,(2n-\frac{1}{2})\pi\} \rightarrow\mathbb{R}$ and $g: (0,\,+\infty)\rightarrow\mathbb{R}$ defined by
\begin{align*}
 y=f(\beta)=\frac{\sin\beta}{\cos\beta(1-\cos\beta)},\,
 y=g(\beta)=-\frac{2q}{\beta}.
\end{align*}Note that $f$ and $g$ have different signs in $(2(n-1)\pi,\,(2n-\frac{3}{2})\pi)\cup((2n-1)\pi,\,2n-\frac{1}{2})\pi)$. Hence the zeros of $f-g$, if any, must be in the intervals $(2n-\frac{3}{2})\pi,\,(2n-1)\pi)$ and $((2n-\frac{1}{2})\pi,\,2n\pi)$. See Figure~\ref{lemma25-fig}.
\begin{figure}
\begin{center}
\psscalebox{1.0 1.0} 
{
\begin{pspicture}(0,-4.753479)(11.07,4.753479)
\psline[linecolor=black, linewidth=0.04, arrowsize=0.05291667cm 2.0,arrowlength=1.4,arrowinset=0.0]{->}(0.8,-4.673479)(0.8,4.846521)
\psline[linecolor=black, linewidth=0.04, arrowsize=0.05291667cm 2.0,arrowlength=1.4,arrowinset=0.0]{->}(0.0,-0.313479)(10.4,-0.313479)
\psline[linecolor=black, linewidth=0.04, linestyle=dashed, dash=0.17638889cm 0.10583334cm](9.6,-4.233479)(9.6,4.446521)
\psline[linecolor=black, linewidth=0.04, linestyle=dashed, dash=0.17638889cm 0.10583334cm](7.2788224,-4.273438)(7.2411776,4.32648)
\psline[linecolor=black, linewidth=0.04, linestyle=dashed, dash=0.17638889cm 0.10583334cm](2.96,-4.373479)(2.96,4.306521)
\psbezier[linecolor=black, linewidth=0.04](0.9,3.966521)(0.95727026,3.0646448)(1.3226267,0.126671)(1.8484331,0.10652099609375)(2.3742397,0.08637099)(2.7445014,3.0625038)(2.8,4.0062337)
\psbezier[linecolor=black, linewidth=0.04](9.53326,-3.935074)(9.461714,-3.208931)(8.877472,-0.6970694)(8.441683,-0.7012324050321468)(8.005894,-0.7053954)(7.4932985,-3.0721219)(7.4130306,-3.9692216)
\psbezier[linecolor=black, linewidth=0.04](7.08,3.986521)(7.111503,0.72830147)(6.166998,-0.32068786)(5.1177144,-0.3227607718620493)(4.0684304,-0.3248337)(3.1592846,-2.3616765)(3.1,-3.953479)
\psbezier[linecolor=black, linewidth=0.07](1.02,-4.573479)(0.77768487,-0.680843)(9.38,-1.1795042)(10.3,-1.07347900390625)
\rput[bl](2.28,-4.753479){$(2n-\frac{3}{2})\pi$}
\rput[bl](6.56,-4.733479){$(2n-\frac{1}{2})\pi$}
\rput[bl](8.34,-0.173479){$\beta_1$}
\pscircle[linecolor=black, linewidth=0.04, fillstyle=solid,fillcolor=black, dimen=outer](8.783044,-1.1004355){0.08304354}
\rput{-1.3019527}(0.02784795,0.18397798){\pscircle[linecolor=black, linewidth=0.04, fillstyle=solid,fillcolor=black, dimen=outer](8.11,-1.133479){0.09}}
\pscircle[linecolor=black, linewidth=0.04, fillstyle=solid,fillcolor=black, dimen=outer](8.46,-0.693479){0.08}
\rput[bl](7.78,0.506521){$\tilde{\beta}_{n,\,2}$}
\rput[bl](8.48,-2.753479){$\tilde{\beta}_{n,3}$}
\psline[linecolor=black, linewidth=0.04, linestyle=dashed, dash=0.17638889cm 0.10583334cm](8.08,-1.073479)(8.08,0.38652098)
\psline[linecolor=black, linewidth=0.04, linestyle=dashed, dash=0.17638889cm 0.10583334cm](8.78,-2.013479)(8.78,-0.313479)
\psline[linecolor=black, linewidth=0.04, linestyle=dashed, dash=0.17638889cm 0.10583334cm](8.4774275,-0.29348108)(8.491268,-0.84217215)
\psline[linecolor=black, linewidth=0.04, linestyle=dashed, dash=0.17638889cm 0.10583334cm](3.6020103,-1.7735183)(3.6379898,-0.31343964)
\pscircle[linecolor=black, linewidth=0.04, fillstyle=solid,fillcolor=black, dimen=outer](3.62,-1.773479){0.08}
\rput[bl](3.46,-0.23347901){$\tilde{\beta}_{n,1}$}
\rput[bl](4.28,2.366521){$y=\frac{\sin \beta}{\cos \beta(1-\cos \beta)}$}
\rput[bl](9.84,-1.653479){$y=-\frac{2q}{\beta}$}
\rput[bl](10.08,-0.873479){$\beta$}
\rput[bl](0.4,4.466521){$y$}
\rput[bl](9.36,-4.593479){$2n\pi$}
\end{pspicture}
}
\end{center}\caption{The graphs of $y=f(\beta)=\frac{\sin\beta}{\cos\beta(1-\cos\beta)}$ and $y=g(\beta)=-\frac{2q}{\beta}$.}
\label{lemma25-fig}
\end{figure}
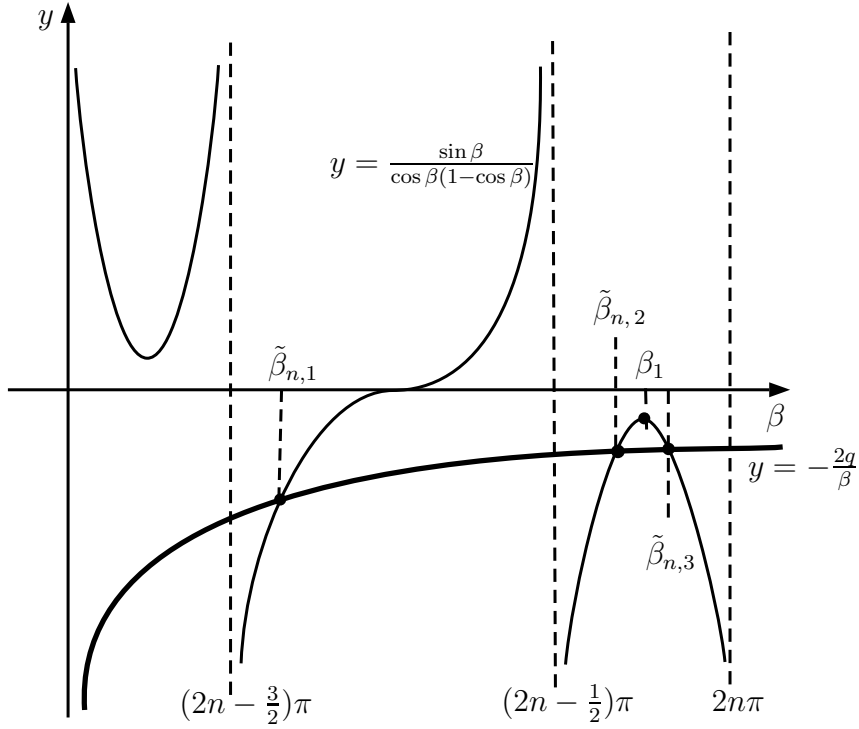

In the interval  $(2n-\frac{3}{2})\pi,\,(2n-1)\pi)$, we have
\begin{align*}
 \lim_{\beta\rightarrow ((2n-\frac{3}{2})\pi)^+}f(\beta)-g(\beta)=& -\infty,\, \lim_{\beta\rightarrow ((2n-1)\pi)^+}f(\beta)-g(\beta)=(2n-1)\pi.
\end{align*}By continuity of $f-g$ in the interval  $((2n-\frac{3}{2})\pi,\,(2n-1)\pi)$ and by the intermediate value theorem, $f-g$ has at least one zero. To show the uniqueness of the zero in the interval $((2n-\frac{3}{2})\pi,\,(2n-1)\pi)$, we consider the inverses of $f$ and $g$ restricted to  $((2n-\frac{3}{2})\pi,\,(2n-1)\pi)$, where $f$ and $g$ are increasing. We have
\begin{align*}
 \frac{\d}{\d \beta}(f^{-1}(y)-g^{-1}(y))= & \frac{(1-\cos\beta)\cos^2\beta}{1-\cos\beta-\cos^2\beta}-\frac{\beta^2}{2q}.
 \end{align*}
If there are multiple zeros, then by continuity the derivatives of $f^{-1}-g^{-1}$ evaluated at the zeros cannot be all negative or all positive.
Let $\beta_0\in((2n-\frac{3}{2})\pi,\,(2n-1)\pi)$ be a zero of $f-g$. Then we have
\begin{align}\label{2q-sine}
 2q\cos\beta_0(1-\cos\beta_0)=-\beta_0\sin\beta_0,
\end{align}and
\begin{align*}
 \frac{\d}{\d \beta}(f^{-1}(y)-g^{-1}(y))\vline_{\,\beta=\beta_0}
 = &\frac{(1-\cos\beta_0)\cos^2\beta_0}{1-\cos\beta_0-\cos^2\beta_0}-\frac{\beta_0^2}{2q}\\
 = &\frac{\cos\beta_0\left(\frac{-\beta_0\sin\beta_0}{2q}\right)}{1-\cos\beta_0-\cos^2\beta_0}-\frac{\beta_0^2}{2q}\\
 = & -\frac{\beta_0}{2q}\left(\frac{\cos\beta_0\sin\beta_0}{1-\cos\beta_0-\cos^2\beta_0}+\beta_0\right)\\
 = & -\frac{\beta_0}{2q}\left(\frac{\beta_0(1-\cos^2\beta_0)-\cos\beta_0(\beta_0-\sin\beta_0)}{1-\cos\beta_0-\cos^2\beta_0}\right)\\
 < &\,\, 0.
\end{align*}This is a contradiction. Therefore,  $f-g$  has a unique zero $\tilde{\beta}_{n,\,1}$ in $((2n-\frac{3}{2})\pi,\,(2n-1)\pi)$. 

Next we consider $f,\,g$ in the interval $\left((2n-\frac{1}{2})\pi,\,2n\pi\right)$ with the assumption $q\geq \beta_1\frac{\sqrt{2\sqrt{5}-2}}{4\sqrt{5}-8}$. To find the local maximum of $f$, we let $f'(\beta)=0$ and obtain
\[
 \frac{1-\cos\beta-\cos^2\beta}{(1-\cos\beta)\cos^2\beta}=0,
\]which leads to $\cos\beta=\frac{-1+\sqrt{5}}{2}$ and $\beta=\beta_1$ where $\beta_1=2n\pi-\arccos \frac{-1+\sqrt{5}}{2}$. Then $f$ is increasing in $(2(n-\frac{1}{2})\pi,\,\beta_1)$ and increasing in $(\beta_1,\,2n\pi)$. Moreover,  $q\geq \beta_1\frac{\sqrt{2\sqrt{5}-2}}{4\sqrt{5}-8}$ is equivalent to 
 \begin{align}\label{fg-beta1}
 f(\beta_1)\geq g(\beta_1).
 \end{align}

 Note that $\lim_{\beta\rightarrow ((2n-\frac{1}{2})\pi)^+}f(\beta)=-\infty< g((2n-\frac{1}{2})\pi)$. Then by the intermediate value theorem, $f-g$ has at least one zero in the interval $((2n-\frac{1}{2})\pi,\,\beta_1)$.
 
 Note that $1-\cos\beta-\cos^2\beta>0$ for $\beta\in ((2n-\frac{1}{2})\pi,\,\beta_1)$ with $0<\cos\beta<\frac{-1+\sqrt{5}}{2}$ and that $f$ and $g$ are increasing in $((2n-\frac{1}{2})\pi,\,\beta_1)$. Let $\beta_0\in ((2n-\frac{1}{2})\pi,\,\beta_1)$ be such that $f(\beta_0)=g(\beta_0)$. We have
\begin{align}
 \frac{\d}{\d \beta}(f^{-1}(y)-g^{-1}(y))\vline_{\,\beta=\beta_0}
 = &\frac{(1-\cos\beta_0)\cos^2\beta_0}{1-\cos\beta_0-\cos^2\beta_0}-\frac{\beta_0^2}{2q}\notag\\
 = &\frac{\cos\beta_0\left(\frac{-\beta_0\sin\beta_0}{2q}\right)}{1-\cos\beta_0-\cos^2\beta_0}-\frac{\beta_0^2}{2q}\notag\\
 = & -\frac{\beta_0}{2q}\left(\frac{\cos\beta_0\sin\beta_0}{1-\cos\beta_0-\cos^2\beta_0}+\beta_0\right)\notag\\
 = & -\frac{\beta_0}{2q}\left(\frac{\beta_0(\cos\beta_0-\sin^2\beta_0)+\cos\beta_0\sin\beta_0)}{1-\cos\beta_0-\cos^2\beta_0}\right)\notag\\
 = & -\frac{\beta_0}{2q}\left(\frac{\beta_0(\cos^2\beta_0+\cos\beta_0-1)+\cos\beta_0\sin\beta_0)}{1-\cos\beta_0-\cos^2\beta_0}\right)\notag\\
 > &\,\, 0,\label{fg-beta1-3}
\end{align} which combined with continuity of $f-g$, implies that  $f-g$  has a unique zero $\tilde{\beta}_{n,\,2}$ in $((2n-\frac{1}{2})\pi,\,\beta_1)$.

Next we consider $f,\,g$ in the interval $\left(\beta_1,\,\,2n\pi\right)$. Note that $\lim_{\beta\rightarrow (2n\pi)^+}f(\beta)=-\infty< g(2n\pi)$ which combined with (\ref{fg-beta1}) implies that $f-g$ has at least one zero in the interval $(\beta_1,\,2n\pi)$. Moreover, the zero is unique since $f-g$ is decreasing in the interval $(\beta_1,\,2n\pi)$. That is, there exists a unique zero $\tilde{\beta}_{n,\,2}\in (\beta_1,\,2n\pi)$.

ii)   We note from the first part of the proof of i) that the existence and uniqueness of $\tilde{\beta}_{n,\,1}\in ((2n-\frac{3}{2})\pi,\,(2n-1)\pi)$ is independent of the value of $q>0$. 
To complete the proof of i), we show that $f-g$ has no zero in 
$((2n-\frac{1}{2})\pi,\,2n\pi)$ if $0<q< \beta_1\frac{\sqrt{2\sqrt{5}-2}}{4\sqrt{5}-8}$ which is equivalent to 
 \begin{align}\label{fg-beta1-0}
 f(\beta_1)< g(\beta_1).
 \end{align}
 Notice that $\lim_{\beta\rightarrow ((2n-\frac{1}{2})\pi)^+}f(\beta)=-\infty< g((2n-\frac{1}{2})\pi)$. If there is a zero of $f-g$ in $((2n-\frac{1}{2})\pi,\,\beta_1)$, there must be at least two and the derivatives of $f^{-1}-g^{-1}$ there are not all positive or all negative. But this is contradicting (\ref{fg-beta1-3}). Therefore, there is no zero in  $((2n-\frac{1}{2})\pi,\,\beta_1)$.
 
 Next we consider $f-g$ in $(\beta_1,\,2n\pi)$. Since we have (\ref{fg-beta1-0}) and $\lim_{\beta\rightarrow (2n\pi)^+}f(\beta)=-\infty< g(2n\pi)$, if there is a zero of $f-g$ in $(\beta_1,\,2n\pi)$, there must be at least two and the derivatives of $f-g$ there are not all positive or all negative. But this is contradicting the fact that $f-g$ is decreasing in $(\beta_1,\,2n\pi)$. Therefore, $f-g$ has no zero in $((2n-\frac{1}{2})\pi,\,2n\pi)$ if $0<q< \beta_1\frac{\sqrt{2\sqrt{5}-2}}{4\sqrt{5}-8}$ and the unique zero is $\tilde{\beta}_{n,\,1}\in \left((2n-\frac{3}{2})\pi,\,(2n-1)\pi\right)$. \qed
\end{proof}
Now we are in the position to give a parameterization of $\delta>0$. 
\begin{theorem}\label{para-delta-1}
 Let $\xi$ and $q$ be positive constants with $\xi\neq 2p$. For every $n\in\mathbb{N}$, let $\beta_1=2n\pi-\arccos\frac{-1+\sqrt{5}}{2}$ and $\beta_n^*$ be the unique zero of the equation
 $\xi(1-\cos\beta)+\beta\sin\beta=0$. Let $\tilde{\beta}_{n,\,1}$, $\tilde{\beta}_{n,\,2}$ and $\tilde{\beta}_{n,\,3}$ be all possible zeros of $2q\cos\beta(1-\cos\beta)+\beta\sin\beta=0$. Let $\delta: (2(n-1)\pi,\,2n\pi)\rightarrow \mathbb{R}$ be defined by
 \[
  \delta=\frac{\xi\beta^2(1-\cos\beta)+\beta^3\sin\beta}{2q\cos\beta(1-\cos\beta)+\beta\sin\beta}.
 \]
 The following are true. 
 \begin{enumerate}
 \item[$i)$]If $q\geq\beta_1\frac{\sqrt{2\sqrt{5}-2}}{4\sqrt{5}-8}$, and $ \beta_n^*\leq \tilde{\beta}_{n,\,2}$, then $\delta(\beta)>0$ if and only if 
\[
\beta\in (2(n-1)\pi,\,\tilde{\beta}_{n,\,1})\cup (\beta_n^*,\,\tilde{\beta}_{n,\,2})\cup (\tilde{\beta}_{n,\,3},\,2n\pi).
\]
\item[$ii)$]If $q\geq\beta_1\frac{\sqrt{2\sqrt{5}-2}}{4\sqrt{5}-8}$, and $\tilde{\beta}_{n,\,2}\leq \beta_n^*\leq \tilde{\beta}_{n,\,3}$, then $\delta(\beta)>0$ if and only if 
\[
\beta\in (2(n-1)\pi,\,\tilde{\beta}_{n,\,1})\cup (\tilde{\beta}_{n,\,2},\,\beta_n^*)\cup (\tilde{\beta}_{n,\,3},\,2n\pi).
\]
\item[$iii)$]If $q\geq\beta_1\frac{\sqrt{2\sqrt{5}-2}}{4\sqrt{5}-8}$, and $\beta_n^*\geq \tilde{\beta}_{n,\,3}$, then $\delta(\beta)>0$ if and only if 
\[
\beta\in (2(n-1)\pi,\,\tilde{\beta}_{n,\,1})\cup (\tilde{\beta}_{n,\,2},\,\tilde{\beta}_{n,\,3})\cup (\beta_n^*,\,2n\pi).
\]
\item[$iv)$]If $0<q<\beta_1\frac{\sqrt{2\sqrt{5}-2}}{4\sqrt{5}-8}$, then $\delta(\beta)>0$ if and only if 
\[
\beta\in (2(n-1)\pi,\,\tilde{\beta}_{n,\,1})\cup (\beta_n^*,\,2n\pi).
\]
\end{enumerate}
\end{theorem}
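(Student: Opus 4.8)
The plan is to reduce the sign of $\delta(\beta)$ to a comparison of the signs of its numerator and denominator, each of which is already controlled by one of the preceding lemmas. First I would factor the numerator as $\xi\beta^2(1-\cos\beta)+\beta^3\sin\beta=\beta^2\bigl(\xi(1-\cos\beta)+\beta\sin\beta\bigr)$, so that on $I_n$, where $\beta^2>0$, it has the same sign as $N(\beta):=\xi(1-\cos\beta)+\beta\sin\beta$. Dividing by the positive quantity $\beta(1-\cos\beta)$ shows that $N(\beta)$ has the same sign as $\frac{\xi}{\beta}+\frac{\sin\beta}{1-\cos\beta}$, which is precisely the strictly decreasing function appearing in the proof of Lemma~\ref{lemma-2-5}; it runs from $+\infty$ to $-\infty$ and vanishes only at $\beta_n^*$. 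Hence $N(\beta)>0$ on $(2(n-1)\pi,\beta_n^*)$ and $N(\beta)<0$ on $(\beta_n^*,2n\pi)$, with $\beta_n^*\in((2n-1)\pi,2n\pi)$.

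Next I would pin down the sign of the denominator $D(\beta):=2q\cos\beta(1-\cos\beta)+\beta\sin\beta$. By Lemma~\ref{lemma-2-6} its zeros in $I_n$ are exactly $\tilde{\beta}_{n,1}$ in case iv), and $\tilde{\beta}_{n,1}<\tilde{\beta}_{n,2}\le\tilde{\beta}_{n,3}$ in cases i)–iii); note $D$ cannot vanish where $\cos\beta=0$, since there $D=\beta\sin\beta\neq 0$, so these are all its zeros. To fix the sign on each subinterval I would evaluate $D$ at the breakpoints: as $\beta\to(2(n-1)\pi)^+$ one has $D\to 0^+$, while $D((2n-\tfrac32)\pi)=\beta>0$, $D((2n-1)\pi)=-4q<0$, $D((2n-\tfrac12)\pi)=-\beta<0$, and $D\to 0^-$ as $\beta\to(2n\pi)^-$. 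Because $D$ takes opposite signs at the breakpoints bracketing each zero and has no other zeros between them, it changes sign at each $\tilde{\beta}_{n,j}$ (a coincidence $\tilde{\beta}_{n,2}=\tilde{\beta}_{n,3}$ merely collapses the corresponding interval). This yields $D>0$ on $(2(n-1)\pi,\tilde{\beta}_{n,1})$, $D<0$ on $(\tilde{\beta}_{n,1},\tilde{\beta}_{n,2})$, $D>0$ on $(\tilde{\beta}_{n,2},\tilde{\beta}_{n,3})$, $D<0$ on $(\tilde{\beta}_{n,3},2n\pi)$ in the three–zero case, and $D>0$ on $(2(n-1)\pi,\tilde{\beta}_{n,1})$, $D<0$ on $(\tilde{\beta}_{n,1},2n\pi)$ in the single–zero case.

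Since $\delta(\beta)>0$ exactly when $N$ and $D$ share a sign, the conclusion follows by intersecting the two sign patterns. Here I would invoke the location data of Lemmas~\ref{lemma-2-5} and \ref{lemma-2-6}, namely $\tilde{\beta}_{n,1}<(2n-1)\pi<\beta_n^*$, so that $\tilde{\beta}_{n,1}<\beta_n^*$ holds in all cases and the leftmost positivity interval $(2(n-1)\pi,\tilde{\beta}_{n,1})$ appears throughout; the alternatives i), ii), iii) then correspond precisely to the three possible placements of $\beta_n^*$ relative to $\tilde{\beta}_{n,2}$ and $\tilde{\beta}_{n,3}$, while iv) is the single–zero case. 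In each case a direct inspection of the subintervals cut out by $\{\tilde{\beta}_{n,1},\tilde{\beta}_{n,2},\tilde{\beta}_{n,3},\beta_n^*\}$ against the two sign tables produces the stated union. I expect the main obstacle to be the bookkeeping of the denominator's sign: verifying that $D$ changes sign exactly once at each $\tilde{\beta}_{n,j}$ and confirming the signs at the intermediate breakpoints $(2n-\tfrac32)\pi$ and $(2n-\tfrac12)\pi$, which is what rules out spurious sign changes between consecutive zeros and makes the four cases exhaustive.
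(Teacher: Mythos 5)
Your proposal is correct and follows essentially the same route as the paper: reduce $\delta>0$ to the condition that $\xi(1-\cos\beta)+\beta\sin\beta$ and $2q\cos\beta(1-\cos\beta)+\beta\sin\beta$ have the same sign, read off their zero sets from Lemmas~\ref{lemma-2-5} and \ref{lemma-2-6}, and intersect the resulting sign patterns according to the position of $\beta_n^*$ relative to $\tilde{\beta}_{n,2}$ and $\tilde{\beta}_{n,3}$. Your breakpoint evaluations at $(2n-\tfrac32)\pi$, $(2n-1)\pi$, $(2n-\tfrac12)\pi$ and the endpoints supply the sign bookkeeping that the paper leaves implicit (the one point needing the monotone-crossing information from the proof of Lemma~\ref{lemma-2-6} is that $D$ genuinely changes sign at $\tilde{\beta}_{n,2}$ and $\tilde{\beta}_{n,3}$, which you correctly flag).
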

\begin{proof}By Lemma~\ref{lemma-2-6} we know that if $q\geq\beta_1\frac{\sqrt{2\sqrt{5}-2}}{4\sqrt{5}-8}$, then
\[
 2q\cos\beta(1-\cos\beta)+\beta\sin\beta>0, 
\]is equivalent to
\begin{align}\label{delta-num-positive}
 \beta\in (2(n-1)\pi,\,\tilde{\beta}_{n,\,1})\cup (\tilde{\beta}_{n,\,2},\,\tilde{\beta}_{n,\,3}).
\end{align}
By Lemma~\ref{lemma-2-5}, we know that $\xi(1-\cos\beta)+\beta\sin\beta>0$ if and only if
\begin{align}\label{delta-den-positive}
 \beta\in (2(n-1)\pi,\,\beta_n^*)\cup (\beta_n^*,\,2n\pi).
\end{align}
Then by (\ref{delta-num-positive}) and (\ref{delta-den-positive}) and by choosing the intervals where $\xi(1-\cos\beta)+\beta\sin\beta$ and $2q\cos\beta(1-\cos\beta)+\beta\sin\beta$ are both positive or both negative, we obtain the conclusions of i), ii) and iii).

Similarly, by Lemma~\ref{lemma-2-6} we know that if $0<q<\beta_1\frac{\sqrt{2\sqrt{5}-2}}{4\sqrt{5}-8}$, then
\[
 2q\cos\beta(1-\cos\beta)+\beta\sin\beta>0, 
\]is equivalent to
\begin{align}\label{delta-num-positive-2}
 \beta\in (2(n-1)\pi,\,\tilde{\beta}_{n,\,1})\cup (\tilde{\beta}_{n,\,1},\,2n\pi).
\end{align}Then by (\ref{delta-num-positive-2}) and (\ref{delta-den-positive}) and by choosing the intervals where $\xi(1-\cos\beta)+\beta\sin\beta$ and $2q\cos\beta(1-\cos\beta)+\beta\sin\beta$ are both positive or both negative, we obtain the conclusions of iv).\qed
\end{proof}

\section{System with instantaneous spindle speed}\label{sec-instant-spindle}

Even though we knew that delayed spindle velocity control is more practically feasible than an instantaneous one, we are  interested what the trade-off between these two approaches could be.    In this section, we consider   model (\ref{SDDEs-system}) of turning processes with the following instantaneous spindle velocity control 
\begin{align}\label{instant-spindle}
 \Omega(t)= \frac{1}{R}(\dot{x}(t)+ cx(t)),
\end{align}
where $c\in\mathbb{R}$ is a parameter, which is the control strategy  considered in \cite{HKT-1}. We reconsider it here for convenience of comparison. Then equation (\ref{milling-eqn-3}) governing the state-dependent delay becomes
\begin{align}
\int_{t-\tau(t)}^t\frac{c}{2\pi R}\cdot x(s) \mathrm{d} s=1. \label{milling-eqn-8a}
\end{align}
System (\ref{milling-eqn-1-1}--\ref{milling-eqn-9})   with the spindle speed control strategy (\ref{milling-eqn-8a})  can be rewritten as
\begin{align}\label{SDDEs-system-control}
\left\{\begin{aligned}
 \frac{\mathrm{d}}{\mathrm{d} t}\begin{bmatrix}x\\ y\\  u \\ v\end{bmatrix}&=
\begin{bmatrix}
u\\
v\\
-\frac{c_x}{m}u-\frac{k_x}{m}x+\frac{K_x\omega}{m}(\nu\tau+y(t)-y(t-\tau(t)))^q\\
-\frac{c_y}{m}v-\frac{k_y}{m}y-\frac{K_y\omega}{m}(\nu\tau+y(t)-y(t-\tau(t)))^q
\end{bmatrix},\\
1&=\int_{t-\tau(t)}^t\frac{c}{2\pi R}\cdot x(s) \d s, 
\end{aligned}\right.
\end{align}where $u(t)=\dot{x}(t),\,v(t)=\dot{y}(t)$ for $t>0$. Assuming that $x(t)>0$ for all $t>0$ we put $\eta  =\int_{0}^t\frac{c}{2\pi R}\cdot x(s)\d s$ and consider the  change of variables  
$
 r(\eta)  =x(t),  
 \rho(\eta) = y(t),  
 j(\eta)  =u(t),  
 l(\eta)  = v(t),  
 k(\eta)  = \tau(t). 
$  
Then by (\ref{milling-eqn-8a})   we have
$
\eta-1= \int_{0}^{t-\tau(t)}\frac{c}{2\pi R}\cdot x(s)\d s,$                                                                                                               
  $r(\eta-1)  =x(t-\tau(t))$ and $
 \rho(\eta-1) = y(t-\tau(t)).$  
The second equation of (\ref{SDDEs-system})  for $\tau$ can be rewritten as
\begin{align*}
 \tau(t)  =t-(t-\tau(t)) 
          = \int_{\eta-1}^\eta \frac{\mathrm{d}t}{\mathrm{d}\bar{\eta}}\mathrm{d}\bar{\eta} 
          = \int_{\eta-1}^\eta \frac{2\pi R}{c}\frac{1}{r(\bar{\eta})}\mathrm{d}\bar{\eta} 
          =  \int_{-1}^0 \frac{2\pi R}{c}\frac{1}{r_{\eta}(s)}\mathrm{d} s.
\end{align*}
It follows that
$
 k(\eta) = \int_{-1}^0 \frac{2\pi R}{c}\frac{1}{r_{\eta}(s)}\mathrm{d} s.  
$   
   With the same process of the last section, we rewrite system (\ref{SDDEs-system})  as
\begin{align}\label{SDDEs-system-eta-2}\left\{\begin{aligned}
 \frac{\mathrm{d}}{\mathrm{d}\eta}\begin{bmatrix}r\\ \rho\\  j \\ l\end{bmatrix}&=
\begin{bmatrix}
j\\
l\\
-\frac{c_x}{m}j-\frac{k_x}{m}r+\frac{K_x\omega}{m}(\nu\,k+\rho-\rho(\eta-1))^q\\
-\frac{c_y}{m}l-\frac{k_y}{m}\rho-\frac{K_y\omega}{m}(\nu\,k+\rho-\rho(\eta-1))^q
\end{bmatrix}\frac{2\pi R}{c}\frac{1}{r(\eta)},\\
 k(\eta) & = \int_{-1}^0 \frac{2\pi R}{c}\frac{1}{r_{\eta}(s)}\d s.
\end{aligned}\right.
\end{align}
The unique stationary point of system (\ref{SDDEs-system-eta-2})   is the same as that of (\ref{SDDEs-system-eta}) if the parameter $c\in\mathbb{R}$ for the spindle velocities assumes the same value. Namely, we have the stationary state of system (\ref{SDDEs-system-eta-2}): 
 \begin{align}\label{equilibrium-control}
 (\bar{r},\,\bar{\rho},\,\bar{k},\,\bar{j},\,\bar{l})=\left(\frac{K_x\omega\nu^q}{k_x}{k^*}^q, \,-\frac{K_y\omega\nu^q}{k_y}{k^*}^q,\,{k^*},\,0,\,0\right),
 \end{align} where ${k^*}=\left(\frac{2\pi R}{ c}\cdot\frac{k_x}{K_x\omega\nu^q}\right)^{\frac{1}{q+1}}$.  Setting $\textbf{x} = (x_1,\,x_2,\,x_3,\,x_4) = (r, \rho, j, l) - (\bar{r}, \bar{\rho}, \bar{j}, \bar{l})$,
we obtain the linearized system of system (\ref{SDDEs-system-eta-2})  near its stationary state and obtain that 
\begin{align}\label{linearized-2}
\frac{\d \textbf{x}}{\d \eta} = k^*(M \textbf{x} + N \textbf{x}(\eta-1))+\frac{c {k^*}^3\nu}{2\pi R} \int_{-1}^{0} Q \textbf{x}_\eta (s) \d s.
\end{align}

By the same token leading to (\ref{virtual-spindle-speed}) we  require $k^*=\frac{2\pi}{\Omega_0}$ which is equivalent to 
 \begin{align}\label{c-value-2}
  c=\frac{R\Omega_{0}^{q+1} k_{x}}{(2 \pi \nu)^q K_{x} \omega},
 \end{align}which is the same as  (\ref{c-value-1}),
 then systems (\ref{SDDEs-system-eta-2}) and (\ref{SDDEs-system-eta}) both have the same stationary states as that of the corresponding system with constant spindle velocity $\Omega_0$. In the following we always assume $c$ satisfies (\ref{c-value-2}) such that the parameter substitution at (\ref{parameter-simplification}) is still valid.  Then   system~(\ref{linearized-2}) can be rewritten as:
\begin{align}\label{4-eqn-2}
\left\{
\begin{aligned}
\dot{x}_1= &\, k^* x_3,\\
 \dot{x}_2= &\, k^* x_4,\\
 \dot{x}_3 = &\, k^* \left(-\frac{k_x}{m}x_1 -\frac{c_x}{m}x_3+ \frac{k_{x}}{m} \frac{K_{1}p^{q-1}}{k_{r}}  (x_2- x_2(\eta-1))\right)\\
& - \frac{q k^* k_x}{m} \int_{-1}^{0} x_{1\eta}(s) \d s,\\
 \dot{x}_4  = &\, k^* \left(-\frac{k_y}{m}x_2 - \frac{c_y}{m}x_4 -\frac{k_{x}K_{1} p^{q-1} }{m} (x_2- x_2(\eta-1))\right)\\
& +  \frac{q k_xk^* k_r}{m} \int_{-1}^{0}  x_{1\eta}(s) \d s.
\end{aligned}\right.
\end{align}
Writing (\ref{4-eqn-2}) into second order scalar equations of  $(x_1,x_2)$, we have
\begin{align}\label{second-order-instant-model}
\left\{
\begin{aligned}
&\ddot{x}_{1} + \frac{k^* c_{x}}{m} \dot{x}_{1} + \frac{{k^*}^2 k_{x}}{m} x_{1} +  \frac{q {k^*}^2 k_x}{m} \int_{-1}^{0} x_{1\eta}(s) \d  s  = \frac{k_{x} K_{1} {k^*}^2p^{q-1}}{m k_{r}}  (x_{2}(\eta) - x_{2}(\eta -1)), \\
&  \ddot{x}_{2} + \frac{k^* c_{y}}{m} \dot{x}_{2} + \frac{{k^*}^2 k_{y}}{m} x_{2} + \frac{k_{x} K_{1} {k^*}^2p^{q-1}}{m}  (x_{2}(\eta) - x_{2}(\eta -1))  \\
 & \quad  =   \frac{q k_xk_r{k^*}^2 }{m} \int_{-1}^{0} x_{1\eta}(s) \d s.
\end{aligned}\right.
\end{align}
Assuming $c_x=c_y$ and $k_x=k_y$ and
bringing the ansatz $(x_1, x_2) =(c_1, c_2) e^{\lambda \eta}$ with $c_1\neq 0$ and $c_2 \neq 0$ into (\ref{second-order-instant-model}), we obtain the characteristic equation of system (\ref{SDDEs-system-eta-2}):
\begin{align}
\mathcal{P}(\lambda) \left(\mathcal{P}(\lambda) + \frac{k_x {k^*}^2}{m} \left(K_1 p^{q-1}+\frac{q}{\lambda}\right) (1- e^{-\lambda})\right)=0.
\end{align} Let $\delta$  be defined at (\ref{xi-delta-def}) and let
\begin{align}\label{h-2-gamma-def}
h_2=K_1 p^{q-1}.
\end{align} The characteristic equation of system (\ref{SDDEs-system-eta-2}) can be rewritten as:
\begin{align*}
\mathcal{P}(\lambda) \left(P(\lambda) +\delta \left(h_2+\frac{q}{\lambda}\right) (1- e^{-\lambda})\right)=0.
\end{align*}
Since the roots of the quadratic polynomial $\mathcal{P}(\lambda)$ always have negative real parts, we consider the roots of 
\begin{align}\label{charac-2}
 \mathcal{P}(\lambda) + \delta\left(h_2+\frac{q}{\lambda}\right) (1- e^{-\lambda})=0.
\end{align}

We first notice that $\lambda=0$ is not a removable singularity of $\mathcal{P}(\lambda) + \delta\left(h_2+\frac{q}{\lambda}\right) (1- e^{-\lambda})$ such that the limit there is zero since we have
\[
 \frac{\d}{\d\lambda}\lambda \left(\mathcal{P}(\lambda) + \delta\left(h_2+\frac{q}{\lambda}\right) (1- e^{-\lambda})\right)\,\vline_{\lambda=0}=\delta(q+1)\neq 0,
\]if $\delta\neq 0$.
Therefore, if $\lambda=i\beta$, $\beta\in\mathbb{R}$ is an eigenvalue then $\beta\neq 0$ and we have,
\begin{align}\label{real-imaginary-2}
\left\{
\begin{aligned}
 -\beta^2+\delta+\delta h_2(1-\cos\beta)+\frac{\delta q}{\beta}\sin\beta  & =  0,\\
 \xi\beta+\delta h_2\sin\beta-\frac{\delta q}{\beta}(1-\cos\beta)  &  =  0.
 \end{aligned}
 \right.
\end{align}

We have
\begin{lemma}\label{lemma-3-3}Suppose that $\xi$, $q$,  $h_2$ and $\delta$ are positive with $\xi\neq 2q$. If $\lambda=i\beta$, $\beta\in\mathbb{R}$  is a zero of $\mathcal{P}(\lambda) + \delta\left(h_2+\frac{q}{\lambda}\right) (1- e^{-\lambda})$, then $\beta\neq 0$ and the following statements are true:
\begin{enumerate}\item [ i$\,)$]  
\[
\max\left\{0,\,\delta-\frac{q\xi}{h_2}\right\}<\beta^2\leq \frac{\delta h_2+2\delta h_2^2-\xi q+\sqrt{(\delta h_2+2\delta h_2^2-\xi q)^2+8\delta h_2 q^2}}{2 h_2},
\]  and $\beta\neq 2n\pi$ for every $n\in\mathbb{Z}$.
\item[ ii$\,)$]We have
\begin{align}\label{lemma-3-3-1}\left\{
\begin{aligned}
h_2 = &\frac{q\beta(1-\cos\beta)-\xi\beta-q\xi\sin\beta}{\xi\beta(1-\cos\beta)+\beta^2\sin\beta},\\[0.25em]
\delta = & \frac{\xi\beta^2(1-\cos\beta)+\beta^3\sin\beta}{2q(1-\cos\beta)+\beta\sin\beta}.
\end{aligned}\right.
 \end{align} 
\item[iii$)$]   $\beta\in (2(n-1)\pi,\,2n\pi)$ or $\beta\in (-2n\pi,\,-2(n-1)\pi)$ for some $n\in\mathbb{N}$. 
\end{enumerate}
\end{lemma}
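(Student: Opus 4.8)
The plan is to follow the same route as the proof of Lemma~\ref{lemma-2-3}, working throughout from the real and imaginary parts (\ref{real-imaginary-2}); the computations here are in fact cleaner because the trigonometric coefficients no longer carry the factors $1\pm 2\cos\beta$. Recall that $\beta\neq 0$ has already been secured by the non-removability of the singularity at $\lambda=0$. I would first dispose of $\beta\neq 2n\pi$: if $\beta=2n\pi$ with $n\neq 0$, then $\sin\beta=0$ and $1-\cos\beta=0$, so the second equation of (\ref{real-imaginary-2}) collapses to $\xi\beta=0$, forcing $\beta=0$, a contradiction. This simultaneously yields the strict inequality $1-\cos\beta>0$ (used below) and settles iii), since any nonzero $\beta$ that avoids all even multiples of $\pi$ must lie in exactly one interval $(2(n-1)\pi,2n\pi)$ or its reflection.

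For i) I would eliminate $\sin\beta$ between the two equations of (\ref{real-imaginary-2}): multiplying the first by $h_2$, the second by $q/\beta$, and subtracting cancels the $\sin\beta$ terms and, after dividing by $h_2$, gives the key identity
\[
\beta^2=\delta-\frac{q\xi}{h_2}+(1-\cos\beta)\left(\delta h_2+\frac{\delta q^2}{\beta^2 h_2}\right).
\]
Since $1-\cos\beta>0$ and all parameters are positive, the parenthetical term is strictly positive, so $\beta^2>\delta-\tfrac{q\xi}{h_2}$; combined with $\beta^2>0$ this gives the lower bound $\max\{0,\delta-q\xi/h_2\}$. For the upper bound I would use $1-\cos\beta\leq 2$, multiply through by $h_2\beta^2>0$, and rearrange into the quadratic inequality $h_2(\beta^2)^2-(\delta h_2+2\delta h_2^2-\xi q)\beta^2-2\delta q^2\leq 0$ in $\beta^2$; its constant term is negative, so there is a single positive root, and $\beta^2$ cannot exceed it, which is precisely the stated upper bound.

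For ii) I would solve the system (\ref{real-imaginary-2}) for $\delta$ and $h_2$. Solving the first equation for $\delta h_2$ and substituting into the second, then clearing the factor $1-\cos\beta$ and using the identity $\sin^2\beta+(1-\cos\beta)^2=2(1-\cos\beta)$, collapses everything to
\[
\delta\bigl(2q(1-\cos\beta)+\beta\sin\beta\bigr)=\xi\beta^2(1-\cos\beta)+\beta^3\sin\beta.
\]
The hypothesis $\xi\neq 2q$ is exactly what guarantees the denominator does not vanish: if $2q(1-\cos\beta)+\beta\sin\beta=0$ then the right-hand side, namely $\beta^2\bigl(\xi(1-\cos\beta)+\beta\sin\beta\bigr)$, must also vanish, and subtracting the two relations forces $(2q-\xi)(1-\cos\beta)=0$, hence $\xi=2q$ since $1-\cos\beta\neq 0$. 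With the denominator nonzero I recover the stated formula for $\delta$, and back-substituting into the expression for $\delta h_2$ and dividing by $\delta$ yields the formula for $h_2$.

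The routine part is the $\sin\beta$-elimination and the quadratic estimate in i); the step I expect to demand the most care is the algebraic reduction in ii), both the simplification leading to the $\delta$-formula (where the identity $\sin^2\beta+(1-\cos\beta)^2=2(1-\cos\beta)$ is the crucial cancellation) and the bookkeeping in the back-substitution for $h_2$, together with the verification that $\xi\neq 2q$ forbids a zero denominator. Apart from the slightly different trigonometric factors, the argument runs in complete parallel to Lemma~\ref{lemma-2-3}.
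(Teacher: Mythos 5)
Your proposal is correct and follows essentially the same route as the paper: part i) is the same bound obtained from $0<1-\cos\beta\le 2$ (the paper phrases it as $-1\le\cos\beta<1$ after solving the linear system for $(\cos\beta,\sin\beta)$, which is your $\sin\beta$-elimination in disguise), and part ii) is the identical substitution–reduction, with your subtraction argument for why $2q(1-\cos\beta)+\beta\sin\beta\neq 0$ being an equivalent variant of the paper's back-substitution showing $\xi=2q$.
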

\begin{proof} i) Notice that by the second equation of (\ref{real-imaginary-2}), $1-\cos\beta\neq 0$, otherwise, $\sin\beta=0$ and hence $\xi\beta=0$ which is impossible. Therefore, we have $\beta\neq 2n\pi$ for every $n\in\mathbb{Z}$.

Solving (\ref{real-imaginary-2}) for $(\cos\beta,\,\sin\beta)$ we obtain that
\begin{align}\label{cos-sin-beta}\left\{
\begin{aligned}
\cos\beta & =\frac{h_2^2+\frac{q^2}{\beta^2}- h_2(\frac{\beta^2}{\delta}-1)-\frac{q\xi}{\delta}}{ h_2^2+\frac{q^2}{\beta^2}},\\
\sin\beta & =\frac{h_2^2+\frac{q}{\beta^2}-h_2\beta\frac{\xi}{\delta}}{h_2^2+\frac{q^2}{\beta^2}}.
\end{aligned}\right.
 \end{align} 
 Since 
 $-1\leq \cos\beta<1$, by the first equation of (\ref{cos-sin-beta}) we have,
 \[
  -1\leq \frac{h_2^2+\frac{q^2}{\beta^2}- h_2(\frac{\beta^2}{\delta}-1)-\frac{q\xi}{\delta}}{ h_2^2+\frac{q^2}{\beta^2}}<1,
 \]
which combined with $\beta\neq 0$ lead to
\begin{align*}
\max\left\{0,\,\delta-\frac{q\xi}{h_2}\right\}<\beta^2\leq \frac{\delta h_2+2\delta h_2^2-\xi q+\sqrt{(\delta h_2+2\delta h_2^2-\xi q)^2+8\delta h_2 q^2}}{2 h_2}.
\end{align*}

ii) By the first equation of (\ref{real-imaginary-2}), we have
\begin{align}\label{lemma3-3-1-h2}
 \delta h_2=\frac{\beta^2-\delta-\frac{\delta q}{\beta}\sin\beta}{1-\cos\beta},
\end{align} with which the second  equation of (\ref{real-imaginary-2}) becomes
\begin{align}\label{real-imaginary-3}
 \xi\beta+\sin\beta\left(\frac{\beta^2-\delta-\frac{\delta q}{\beta}\sin\beta}{1-\cos\beta}\right)-\frac{\delta q}{\beta}(1-\cos\beta)   =  0.
\end{align}
From (\ref{real-imaginary-3}) we obtain
\begin{align}\label{real-imaginary-4}
 \delta q=\frac{1}{2}\left(\xi\beta^2+\frac{\beta(\beta^2-\delta)\sin\beta}{1-\cos\beta}\right).
\end{align}Solving for $\delta$ from (\ref{real-imaginary-4}) we have
\begin{align}\label{real-imaginary-5}
 \left(q+\frac{\beta\sin\beta}{2(1-\cos\beta)}\right)\delta= \frac{1}{2}\xi\beta^2+\frac{\beta^3\sin\beta}{2(1-\cos\beta)}.
\end{align}
Notice that if $q+\frac{\beta\sin\beta}{2(1-\cos\beta)}=0$, (\ref{real-imaginary-4}) becomes $0=\frac{1}{2}\xi\beta^2-q\beta^2$ and hence $\xi=2q$, which is a contradiction to the assumption. Therefore, $q+\frac{\beta\sin\beta}{2(1-\cos\beta)}\neq 0$ and (\ref{real-imaginary-4}) implies that
\begin{align}\label{real-imaginary-6}
 \delta= \frac{\xi\beta^2(1-\cos\beta)+\beta^3\sin\beta}{2q(1-\cos\beta)+\beta\sin\beta}.
\end{align}
 Next we solve for $h_2$. Note that $\delta>0$ and hence $\xi(1-\cos\beta)+\beta\sin\beta\neq 0$. We bring $\delta$ at (\ref{real-imaginary-6}) into (\ref{lemma3-3-1-h2}) to obtain
\begin{align}\label{real-imaginary-7}
\begin{aligned}
 h_2=& \frac{\beta^2}{\delta(1-\cos\beta)}-\frac{1+\frac{q}{\beta}\sin\beta}{1-\cos\beta}\\
 = &\frac{2q(1-\cos\beta)+\beta\sin\beta}{(\xi(1-\cos\beta)+\beta\sin\beta)(1-\cos\beta)}-\frac{1+\frac{q}{\beta}\sin\beta}{1-\cos\beta}\\[0.25em]
 = & \frac{q\beta(1-\cos\beta)-\xi\beta-q\xi\sin\beta}{\xi\beta(1-\cos\beta)+\beta^2\sin\beta}.
  \end{aligned}
\end{align}
\qed
\end{proof}

\begin{lemma}\label{lemma-3-4}Assume  $\xi$,   $h_2$ and $\delta$ are positive.    If $\beta\in\mathbb{R}$ is such that  ii) and iii) of Lemma~\ref{lemma-3-3} are satisfied, then $\lambda=i\beta$ is a purely imaginary  zero of $\mathcal{P}(\lambda)+ \delta \left(h_2+\frac{q}{\lambda}\right)(1-e^{-\lambda})=0$.
\end{lemma}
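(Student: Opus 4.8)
The plan is to verify that both scalar equations of (\ref{real-imaginary-2}) hold, since separating the real and imaginary parts of $\mathcal{P}(i\beta) + \delta\left(h_2 + \frac{q}{i\beta}\right)(1 - e^{-i\beta})$ reproduces exactly that pair; once both parts vanish and $\beta \neq 0$, the point $i\beta$ is a purely imaginary zero. Condition iii) of Lemma~\ref{lemma-3-3} places $\beta$ in an interval of the form $(2(n-1)\pi, 2n\pi)$ (or its reflection), so $\beta \neq 0$ and $1 - \cos\beta \neq 0$ are available throughout. The argument then runs backward along the reduction chain of Lemma~\ref{lemma-3-3}, in the same spirit as the proof of Lemma~\ref{lemma-2-4}.

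First I would collect the nonvanishing facts that license the division steps. Rewriting the hypothesis (\ref{real-imaginary-6}) in factored form as $\delta = \frac{\beta^2\left(\xi(1-\cos\beta) + \beta\sin\beta\right)}{2q(1-\cos\beta) + \beta\sin\beta}$ and using $\delta > 0$ forces both $\xi(1-\cos\beta) + \beta\sin\beta \neq 0$ and $2q(1-\cos\beta) + \beta\sin\beta \neq 0$; these are precisely the denominators encountered when one inverts (\ref{real-imaginary-7}) and (\ref{real-imaginary-6}).

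Next I would recover the first equation of (\ref{real-imaginary-2}). Starting from the assumed $h_2$ at (\ref{real-imaginary-7}), substituting the assumed $\delta$ from (\ref{real-imaginary-6}), and clearing the common factor $\xi(1-\cos\beta) + \beta\sin\beta$, one retraces the computation of (\ref{real-imaginary-7}) in reverse and arrives at $\delta h_2 = \frac{\beta^2 - \delta - \frac{\delta q}{\beta}\sin\beta}{1-\cos\beta}$, which is (\ref{lemma3-3-1-h2}); multiplying through by $1 - \cos\beta$ and rearranging yields the first equation of (\ref{real-imaginary-2}). For the second equation, I would read (\ref{real-imaginary-6}) backward: multiplying by $2q(1-\cos\beta) + \beta\sin\beta$ recovers (\ref{real-imaginary-5}), which rearranges to (\ref{real-imaginary-4}) and then to (\ref{real-imaginary-3}). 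Substituting the already-established identity (\ref{lemma3-3-1-h2}) for the bracketed quotient $\frac{\beta^2 - \delta - \frac{\delta q}{\beta}\sin\beta}{1-\cos\beta} = \delta h_2$ inside (\ref{real-imaginary-3}) then collapses it to $\xi\beta + \delta h_2\sin\beta - \frac{\delta q}{\beta}(1-\cos\beta) = 0$, the second equation of (\ref{real-imaginary-2}).

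With both equations of (\ref{real-imaginary-2}) established and $\beta \neq 0$, the expression $\mathcal{P}(i\beta) + \delta\left(h_2 + \frac{q}{i\beta}\right)(1 - e^{-i\beta})$ vanishes, so $i\beta$ is a purely imaginary zero. The only subtlety, and the main point to check, is the reversibility of each division; this is handled entirely by iii) (for $1 - \cos\beta$) together with $\delta > 0$ (for the two trigonometric denominators), so no estimate beyond this bookkeeping is needed.
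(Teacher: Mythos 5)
Your proposal is correct and follows essentially the same route as the paper: it verifies the pair (\ref{real-imaginary-2}) by running the reduction of Lemma~\ref{lemma-3-3} backward from (\ref{real-imaginary-7}) and (\ref{real-imaginary-6}) through (\ref{lemma3-3-1-h2}), (\ref{real-imaginary-4}) and (\ref{real-imaginary-3}). Your explicit bookkeeping of why each division is reversible (iii) giving $1-\cos\beta\neq 0$, and $\delta>0$ forcing the two trigonometric denominators to be nonzero) is a welcome addition that the paper leaves implicit, but it does not change the argument.
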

 \begin{proof} We show that  (\ref{real-imaginary-2}) is true. Working backward at the derivation of $h_2$ at (\ref{real-imaginary-7}), we have
 \begin{align}\label{h2-backward}
  h_2=\frac{\beta^2}{\delta (1-\cos\beta)}-\frac{1+\frac{q}{\beta}\sin\beta}{1-\cos\beta},
 \end{align}which leads to the first equation of  (\ref{real-imaginary-2}).

 Similarly working backward at the derivation of $\delta$ at (\ref{real-imaginary-6}), we have (\ref{real-imaginary-4}) and 
 (\ref{real-imaginary-3}) which combined with (\ref{h2-backward}) we have the second equation of  (\ref{real-imaginary-2}). \hfill{\hspace*{1em}}\qed
\end{proof}
In the following we are interested to obtain a parameterization of all possible positive values of $h_2$ and $\delta$ for which $\mathcal{P}(\lambda)+ \delta \left(h_2+\frac{q}{\lambda}\right)(1-e^{-\lambda})=0$ has purely imaginary  zeros. Namely, we find out all possible values of $\beta$ such that   $h_2$ and $\delta$ parameterized at (\ref{lemma-3-3-1}) are both positive.

 We notice  from (\ref{lemma-3-3-1}) that $(h_2,\,\delta)$ is an even function of $\beta$ on its domain. In the following we assume that $\beta>0$.
 \begin{lemma}\label{lemma-3-3-2}
  Let $\xi$ and $q$ be positive with $\xi\neq 2q$. For every $n\in\mathbb{N}$,  the equation $2q(1-\cos\beta)+\beta\sin\beta=0$ has a unique zero  $\tilde{\beta}_n$ in $(2(n-1)\pi,\,2n\pi)$. Moreover, let $\beta_n^*\in (2(n-1)\pi,\,2n\pi)$ be the unique zero of $\xi(1-\cos\beta)+\beta\sin\beta=0$ obtained at Lemma~\ref{lemma-2-5}. The following are true:
  \begin{enumerate}
    \item[i)] if $\xi<2q$, then  $(2n-1)\pi<\beta_n^*<\tilde{\beta}_n<2n\pi$;
   \item[ii)] if $\xi>2q$, then  $(2n-1)\pi<\tilde{\beta}_n<\beta_n^*<2n\pi$.
  \end{enumerate}
 \end{lemma}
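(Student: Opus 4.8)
The plan is to exploit that the defining equation $2q(1-\cos\beta)+\beta\sin\beta=0$ for $\tilde{\beta}_n$ is formally identical to the equation $\xi(1-\cos\beta)+\beta\sin\beta=0$ treated in Lemma~\ref{lemma-2-5}, with the constant $\xi$ replaced by $2q$. First I would therefore establish existence and uniqueness of $\tilde{\beta}_n$ in $((2n-1)\pi,\,2n\pi)$ by repeating the argument of Lemma~\ref{lemma-2-5}: since $\beta(1-\cos\beta)>0$ on the open interval, the equation is equivalent to $\frac{2q}{\beta}+\frac{\sin\beta}{1-\cos\beta}=0$, whose left side is strictly decreasing on $(2(n-1)\pi,\,2n\pi)$ because its derivative $-\frac{2q}{\beta^2}-\frac{1}{1-\cos\beta}<0$ parallels (\ref{zero-01}), and it decreases from $+\infty$ at the left endpoint to $-\infty$ at the right endpoint. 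By the intermediate value theorem there is exactly one zero $\tilde{\beta}_n$, and evaluating $\frac{\sin\tilde{\beta}_n}{1-\cos\tilde{\beta}_n}=-\frac{2q}{\tilde{\beta}_n}<0$ forces $\tilde{\beta}_n\in((2n-1)\pi,\,2n\pi)$, exactly as for $\beta_n^*$.

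For the ordering I would introduce the one-parameter family $F_a(\beta)=\frac{a}{\beta}+\frac{\sin\beta}{1-\cos\beta}$ on $(2(n-1)\pi,\,2n\pi)$, so that $\beta_n^*$ is the unique zero of $F_\xi$ and $\tilde{\beta}_n$ the unique zero of $F_{2q}$, each $F_a$ being strictly decreasing by the derivative computation above. The essential observation is that $F_a(\beta)$ is strictly increasing in the parameter $a$ for fixed $\beta>0$, since $\partial_a F_a=1/\beta>0$. Thus comparing the two members of the family at a common point and then invoking monotonicity in $\beta$ pins down the relative order of their zeros.

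Concretely, if $\xi<2q$ then $F_{2q}(\beta_n^*)=F_\xi(\beta_n^*)+\frac{2q-\xi}{\beta_n^*}>0=F_{2q}(\tilde{\beta}_n)$, and since $F_{2q}$ is strictly decreasing this yields $\beta_n^*<\tilde{\beta}_n$; combined with the fact that both zeros already lie in $((2n-1)\pi,\,2n\pi)$ this gives the chain $(2n-1)\pi<\beta_n^*<\tilde{\beta}_n<2n\pi$ of case i). The case $\xi>2q$ is symmetric: $F_\xi(\tilde{\beta}_n)=F_{2q}(\tilde{\beta}_n)+\frac{\xi-2q}{\tilde{\beta}_n}>0=F_\xi(\beta_n^*)$, and monotonicity of $F_\xi$ gives $\tilde{\beta}_n<\beta_n^*$, hence $(2n-1)\pi<\tilde{\beta}_n<\beta_n^*<2n\pi$. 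I do not expect a genuine obstacle here; the only points requiring care are confirming that both zeros land in the same half-interval $((2n-1)\pi,\,2n\pi)$ so that the inequalities concatenate, and noting that the hypothesis $\xi\neq 2q$ is precisely what makes the comparison strict, since for $\xi=2q$ the two functions coincide and $\beta_n^*=\tilde{\beta}_n$.
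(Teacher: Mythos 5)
Your proposal is correct and follows essentially the same route as the paper: existence and uniqueness via the strictly decreasing function $\frac{2q}{\beta}+\frac{\sin\beta}{1-\cos\beta}$, location in $((2n-1)\pi,\,2n\pi)$ from the sign of $\cot\frac{\beta}{2}$ at the zero, and the ordering by comparing the two defining relations through the monotonicity of $\beta\mapsto\beta\cot\frac{\beta}{2}$ (your parameter-monotonicity formulation of $F_a$ is just an algebraic repackaging of the paper's comparison $\beta_n^*\cot\frac{\beta_n^*}{2}=-\xi$ versus $\tilde{\beta}_n\cot\frac{\tilde{\beta}_n}{2}=-2q$). Incidentally, your stated endpoint limits ($+\infty$ on the left, $-\infty$ on the right) are the correct ones; the paper's proof has them transposed, a harmless typo.
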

 \begin{proof}
  Note that $\cos\beta\neq 1$ in $(2(n-1)\pi,\,2n\pi)$. Consider
  \begin{align}\label{zero-1}
   \frac{2q}{\beta}+\frac{\sin\beta}{1-\cos\beta}=0.
  \end{align}Note that
  \begin{align}
   \frac{\d}{\d \beta}\left(\frac{2q}{\beta}+\frac{\sin\beta}{1-\cos\beta}\right)= -\frac{2q}{\beta^2}-\frac{1}{(1-\cos\beta)}<0,
  \end{align}
and that $\lim_{\beta\rightarrow (2n\pi)^-} \frac{2q}{\beta}+\frac{\sin\beta}{1-\cos\beta}=+\infty$, $\lim_{\beta\rightarrow (2(n-1)\pi)^+} \frac{2q}{\beta}+\frac{\sin\beta}{1-\cos\beta}=-\infty$.
It follows that the equation  $2q(1-\cos\beta)+\beta\sin\beta=0$ has a unique zero  $\tilde{\beta}_n$  in $(2(n-1)\pi,\,2n\pi)$.
Notice that by (\ref{zero-1}) we have $\frac{\sin\beta}{1-\cos\beta}<0$. Therefore,  we have
that $\tilde{\beta}_n$ is in the interval $((2n-1)\pi,\,(2n-1)\pi)$. Moreover, if $\xi>2q$, we have $\beta_n^*\cot\frac{\beta_n^*}{2}=-\xi<-2q=\tilde{\beta}_n\cot\frac{\tilde{\beta}_n}{2}$. Since $\beta\rightarrow\beta\cot\frac{\beta}{2}$ is a decreasing function in $(2(n-1)\pi,\,(2n-1)\pi)$ with derivative $\frac{\sin\beta-\beta}{1-\cos\beta}<0$, we obtain that $\beta_n^*>\tilde{\beta}_n$. Similarly, if $\xi<2q$, we have $\beta_n^*<\tilde{\beta}_n$.\qed
 \end{proof}

Now we obtain a parameterization for $\delta>0$.
\begin{theorem}\label{delta-parameterization}
 Let $\xi$ and $q$ be positive numbers with $\xi\neq 2q$, $\beta_n^*$ and $\tilde{\beta}_n$ be the unique zeros of the equations $\xi(1-\cos\beta)+\beta\sin\beta=0$ and $2q(1-\cos\beta)+\beta\sin\beta=0$  in $(2(n-1)\pi,\,2n\pi)$, respectively. Define the mapping $\delta: (2(n-1)\pi,\,2n\pi)\rightarrow\mathbb{R}$ by
 \begin{align}\label{delta-def}
  \delta(\beta)=\frac{\xi\beta^2(1-\cos\beta)+\beta^3\sin\beta}{2q(1-\cos\beta)+\beta\sin\beta}.
 \end{align}
Then the following are true:
 \begin{enumerate}
  \item[i)] If $\xi<2q$,  $\delta(\beta)>0$ if and only if 
  $
   \beta\in (2(n-1)\pi,\,\beta_n^*)\cup (\tilde{\beta}_n,\,2n\pi).
 $
\item[ii)] If $\xi>2q$,  $\delta(\beta)>0$ if and only if
 $
   \beta\in (2(n-1)\pi,\,\tilde{\beta}_n)\cup ({\beta}_n^*,\,2n\pi).
$

 \end{enumerate}

\end{theorem}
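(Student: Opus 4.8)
The plan is to reduce the positivity of $\delta(\beta)$ to a sign comparison between its numerator and denominator, both of which have already been analyzed in the preceding lemmas. The key preliminary observation is that the numerator factors as
\[
\xi\beta^2(1-\cos\beta)+\beta^3\sin\beta=\beta^2\bigl(\xi(1-\cos\beta)+\beta\sin\beta\bigr),
\]
so that on the open interval $(2(n-1)\pi,\,2n\pi)$, where $\beta^2>0$, the sign of the numerator of (\ref{delta-def}) coincides exactly with the sign of $\xi(1-\cos\beta)+\beta\sin\beta$. Thus the numerator is governed by the same zero $\beta_n^*$ that controls $\xi(1-\cos\beta)+\beta\sin\beta$, while the denominator $2q(1-\cos\beta)+\beta\sin\beta$ is governed by $\tilde{\beta}_n$. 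Since $1-\cos\beta>0$ throughout the interval, $\delta(\beta)>0$ holds precisely when these two expressions share a common sign.

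Next I would pin down the individual signs. For the numerator, I invoke Lemma~\ref{lemma-2-5}: there the function $\frac{\xi}{\beta}+\frac{\sin\beta}{1-\cos\beta}$ is shown to be strictly decreasing on $(2(n-1)\pi,\,2n\pi)$ with a single zero at $\beta_n^*$, and since $\xi(1-\cos\beta)+\beta\sin\beta=(1-\cos\beta)\beta\bigl(\frac{\xi}{\beta}+\frac{\sin\beta}{1-\cos\beta}\bigr)$ with $(1-\cos\beta)\beta>0$, this expression is positive on $(2(n-1)\pi,\,\beta_n^*)$ and negative on $(\beta_n^*,\,2n\pi)$. The identical monotonicity argument in Lemma~\ref{lemma-3-3-2} gives that the denominator $2q(1-\cos\beta)+\beta\sin\beta$ is positive on $(2(n-1)\pi,\,\tilde{\beta}_n)$ and negative on $(\tilde{\beta}_n,\,2n\pi)$. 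Crucially, Lemma~\ref{lemma-3-3-2} also records the ordering of the two critical points: $\beta_n^*<\tilde{\beta}_n$ when $\xi<2q$, and $\tilde{\beta}_n<\beta_n^*$ when $\xi>2q$.

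The conclusion then follows by a short sign table on the three subintervals determined by $\beta_n^*$ and $\tilde{\beta}_n$. In case i) ($\xi<2q$, so $\beta_n^*<\tilde{\beta}_n$): on $(2(n-1)\pi,\,\beta_n^*)$ both numerator and denominator are positive; on $(\beta_n^*,\,\tilde{\beta}_n)$ the numerator turns negative while the denominator is still positive; and on $(\tilde{\beta}_n,\,2n\pi)$ both are negative. Hence $\delta(\beta)>0$ exactly on $(2(n-1)\pi,\,\beta_n^*)\cup(\tilde{\beta}_n,\,2n\pi)$, which is the claimed set. Case ii) ($\xi>2q$, so $\tilde{\beta}_n<\beta_n^*$) is symmetric: both factors are positive on $(2(n-1)\pi,\,\tilde{\beta}_n)$, opposite in sign on $(\tilde{\beta}_n,\,\beta_n^*)$, and both negative on $(\beta_n^*,\,2n\pi)$, yielding $\delta(\beta)>0$ on $(2(n-1)\pi,\,\tilde{\beta}_n)\cup(\beta_n^*,\,2n\pi)$.

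I do not anticipate a genuine obstacle here, since the argument is a direct sign-chase built on results already established. The only point demanding care is ensuring the correct direction of the sign changes of $\xi(1-\cos\beta)+\beta\sin\beta$ and $2q(1-\cos\beta)+\beta\sin\beta$ across their respective roots — that is, reading off from the monotonicity in Lemmas~\ref{lemma-2-5} and \ref{lemma-3-3-2} that each expression is positive \emph{before} its root and negative \emph{after} it — together with correctly importing the ordering of $\beta_n^*$ and $\tilde{\beta}_n$ dictated by the sign of $\xi-2q$, since it is exactly this ordering that distinguishes cases i) and ii).
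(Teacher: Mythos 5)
Your proposal is correct and takes essentially the same route as the paper: reduce $\delta(\beta)>0$ to the condition that $\xi(1-\cos\beta)+\beta\sin\beta$ and $2q(1-\cos\beta)+\beta\sin\beta$ have the same sign, use the fact that each changes sign exactly once (from positive to negative) at $\beta_n^*$ and $\tilde{\beta}_n$ respectively, and then apply the ordering of these two zeros from Lemma~\ref{lemma-3-3-2} according to the sign of $\xi-2q$. Your write-up is merely more explicit than the paper's (factoring out $\beta^2$ and tabulating signs on the three subintervals), but the underlying argument is identical.
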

\begin{proof}
 Note that $\delta>0$ is equivalent to  
 \[
 (\xi(1-\cos\beta)+\beta\sin\beta)\cdot(2q(1-\cos\beta)+\beta\sin\beta)>0,
 \]
 and that $\xi(1-\cos\beta)+\beta\sin\beta$ and $2q(1-\cos\beta)+\beta\sin\beta$ changes from positive to negative 
when $\beta$ passes through the zeros $\beta_n^*$ and $\tilde{\beta}_n$ from $-\infty$, respectively. Then by Lemma~\ref{lemma-3-3-2}, the conclusions are true.\qed 
\end{proof}

Next we obtain a parameterization for $h_2>0$. For this purpose, we determine in the following lemma the one-sided limits of $h_2$ at its vertical asymptote  $\beta=\beta_n^*$ where $\beta_n^*$ is the zero  of $\delta$ in $(2(n-1)\pi,\,2n\pi)$.
\begin{lemma}\label{h_2-parameterization}
 Let $\xi$ and $q$ be positive numbers with $\xi\neq 2q$, $\beta_n^*$ be the unique zero of the equations $\xi(1-\cos\beta)+\beta\sin\beta=0$ with $\beta\in(2(n-1)\pi,\,2n\pi)$. Define the mapping $h_2: (2(n-1)\pi,\,2n\pi)\rightarrow\mathbb{R}$ 
 \begin{align}\label{delta-function}
   h_2(\beta)=\frac{q\beta(1-\cos\beta)-\xi\beta-q\xi\sin\beta}{\xi\beta(1-\cos\beta)+\beta^2\sin\beta}.
 \end{align}
Then we have $\lim_{\beta\rightarrow(2n\pi)^{-}}h_2(\beta)=+\infty$ $\lim_{\beta\rightarrow(2(n-1)\pi)^{+}}h_2(\beta)=-\infty$. Moreover, the following are true:
 \begin{enumerate}
  \item[i)] If $\xi<2q$, we have \begin{align*}
  \begin{aligned}
\lim_{\beta\rightarrow(\beta_n^*)^+}h_2(\beta)= & -\infty,\, \lim_{\beta\rightarrow(\beta_n^*)^-}h_2(\beta)=  +\infty,\, \lim_{\beta\rightarrow\tilde{\beta}_n}h_2(\beta) < 0,
  \end{aligned}\end{align*}
 \item[ii)] If $\xi>2q$,  we have \begin{align*}
  \begin{aligned}
\lim_{\beta\rightarrow(\beta_n^*)^+}h_2(\beta)= & +\infty,\, \lim_{\beta\rightarrow(\beta_n^*)^-}h_2(\beta)=  -\infty,\, \lim_{\beta\rightarrow\tilde{\beta}_n}h_2(\beta) >0. 
  \end{aligned}\end{align*}\end{enumerate}
\end{lemma}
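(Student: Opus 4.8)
The plan is to separate the four asserted limits according to whether $h_2$ blows up or stays finite, and to exploit the factorization of the denominator. First I would record the structural fact that the denominator of $h_2$ in (\ref{delta-function}) factors as $\beta\bigl(\xi(1-\cos\beta)+\beta\sin\beta\bigr)$, so on $(2(n-1)\pi,2n\pi)$ it vanishes \emph{exactly} at $\beta_n^*$ (the zero supplied by Lemma~\ref{lemma-2-5}), while it is nonzero at $\tilde\beta_n$ since $\xi\neq 2q$ forces $\beta_n^*\neq\tilde\beta_n$ by Lemma~\ref{lemma-3-3-2}. Hence $h_2$ diverges only at the two endpoints and at $\beta_n^*$, and is continuous at $\tilde\beta_n$; the statement splits accordingly.

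For the endpoint limits I would expand locally. Writing $\beta=2n\pi-\epsilon$ gives $1-\cos\beta\sim\epsilon^2/2$ and $\sin\beta\sim-\epsilon$, so the numerator tends to $-\xi(2n\pi)<0$ while the denominator is dominated by $\beta^2\sin\beta\to 0^-$, whence $h_2\to+\infty$. Writing $\beta=2(n-1)\pi+\epsilon$ flips the sign of $\sin\beta$, so the denominator approaches $0^+$ and $h_2\to-\infty$; for $n=1$ the numerator also vanishes at the left endpoint, but a one–order–higher expansion (numerator $\sim-\xi(1+q)\beta$ against denominator $\sim(\tfrac{\xi}{2}+1)\beta^3$) leaves the same $-\infty$. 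This yields the two boundary limits.

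For the one–sided limits at $\beta_n^*$ I would avoid differentiating the raw quotient and instead use the relation behind (\ref{lemma3-3-1-h2}), namely $h_2=\bigl(\beta^2/\delta-1-\tfrac{q}{\beta}\sin\beta\bigr)/(1-\cos\beta)$. Since $\delta$ in (\ref{delta-def}) carries the factor $\xi(1-\cos\beta)+\beta\sin\beta$ in its numerator, $\delta(\beta_n^*)=0$, and the dominant term $\beta^2/\bigl(\delta\,(1-\cos\beta)\bigr)$ governs the limit. The sign of $\delta$ on each side of $\beta_n^*$ is read off from Theorem~\ref{delta-parameterization} together with the ordering of $\beta_n^*$ and $\tilde\beta_n$ in Lemma~\ref{lemma-3-3-2}: for $\xi<2q$ one has $\beta_n^*<\tilde\beta_n$ with $\delta>0$ just left and $\delta<0$ just right of $\beta_n^*$, forcing $h_2\to+\infty$ from the left and $-\infty$ from the right; the inequality $\xi>2q$ reverses the ordering and hence both signs. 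Here I use $1-\cos\beta_n^*>0$.

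The delicate step, and the main obstacle, is the value at $\tilde\beta_n$. Because $h_2$ is continuous there, the limit is simply $h_2(\tilde\beta_n)$, and the whole difficulty is pinning down its sign from a formula in which it is not transparent. The key is to substitute the defining identity $\tilde\beta_n\sin\tilde\beta_n=-2q(1-\cos\tilde\beta_n)$: the denominator then collapses to $\tilde\beta_n(1-\cos\tilde\beta_n)(\xi-2q)$, and eliminating $q$ via $\sin^2\tilde\beta_n=(1-\cos\tilde\beta_n)(1+\cos\tilde\beta_n)$ reduces the numerator so that $h_2(\tilde\beta_n)$ simplifies to a single constant. Carrying this substitution out carefully is where all the care lies, and the outcome I obtain is the constant $-\tfrac12$, \emph{independent} of the sign of $\xi-2q$. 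I would therefore treat reconciling this value with the case-dependent signs asserted in i) and ii) as the crux: the computation indicates the limit at $\tilde\beta_n$ is negative in both regimes, so the sign claimed under $\xi>2q$ should be re-examined before the parameterization of $\{h_2>0\}$ is built on it.
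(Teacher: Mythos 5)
Your overall strategy matches the paper's for the endpoint and $\beta_n^*$ limits, with two genuine differences. For the one-sided limits at $\beta_n^*$ the paper does not pass through the $\delta$-parameterization as you do; it solves $\xi(1-\cos\beta_n^*)+\beta_n^*\sin\beta_n^*=0$ explicitly for $\cos\beta_n^*=\frac{\xi^2-{\beta_n^*}^2}{\xi^2+{\beta_n^*}^2}$, $\sin\beta_n^*=-\frac{2\beta_n^*\xi}{\xi^2+{\beta_n^*}^2}$, finds the numerator at $\beta_n^*$ equals $(2q-\xi)\beta_n^*$, and combines this with the sign of the denominator $\beta\bigl(\xi(1-\cos\beta)+\beta\sin\beta\bigr)$ on either side; both routes give the same answers, and yours is arguably cleaner since the identity $h_2=\frac{\beta^2}{\delta(1-\cos\beta)}-\frac{1+\frac{q}{\beta}\sin\beta}{1-\cos\beta}$ is already established in the paper at (\ref{real-imaginary-7}). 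Your treatment of the left endpoint for $n=1$ is also more careful than the paper's, whose argument ``numerator $\to-2(n-1)\pi\xi<0$'' degenerates to $0$ when $n=1$ and genuinely needs the higher-order expansion you supply.

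The important point is your computation at $\tilde\beta_n$, which is correct: substituting $\tilde\beta_n\sin\tilde\beta_n=-2q(1-\cos\tilde\beta_n)$ and $1-\cos\tilde\beta_n=\frac{2\tilde\beta_n^2}{4q^2+\tilde\beta_n^2}$ gives numerator $\frac{(2q-\xi)\tilde\beta_n^3}{4q^2+\tilde\beta_n^2}$ and denominator $(\xi-2q)\tilde\beta_n(1-\cos\tilde\beta_n)=\frac{2(\xi-2q)\tilde\beta_n^3}{4q^2+\tilde\beta_n^2}$, so $h_2(\tilde\beta_n)=-\tfrac12$ identically, regardless of the sign of $\xi-2q$. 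The paper's claim in ii) that this limit is positive for $\xi>2q$ is therefore an error, and you have located exactly where its proof breaks: the paper evaluates the sign of the denominator at $\tilde\beta_n$ by invoking (\ref{denom-h2}), which is only valid on $\bigl(\max\{\beta_n^*,\tilde\beta_n\},2n\pi\bigr)$; when $\xi>2q$ Lemma~\ref{lemma-3-3-2} gives $\tilde\beta_n<\beta_n^*$, so $\tilde\beta_n$ lies outside that interval and the denominator there is in fact positive, which together with the (correctly computed) negative numerator yields $h_2(\tilde\beta_n)<0$. Note that the corrected sign is actually what the downstream results require: in Theorem~\ref{h_2-parameterization-3}\,iii)\,b) the point $\tilde\beta_n$ lies in $(\tilde\gamma_n,\beta_n^*)$, where the asserted parameterization forces $h_2<0$, consistent with $-\tfrac12$ and inconsistent with the lemma's stated ii). So your proposal is not merely an acceptable alternative proof; it is a correction of the statement, and the flag you raise at the end should be resolved by amending ii) to read $\lim_{\beta\rightarrow\tilde\beta_n}h_2(\beta)=-\tfrac12<0$ in both regimes.
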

\begin{proof}
 Notice that by Lemma~\ref{lemma-3-3-2}, we have
 \begin{align}\label{denom-h2}
  \xi\beta(1-\cos\beta)+\beta^2\sin\beta<0,
 \end{align}
for $\beta\in (\max\{\beta_n^*,\,\tilde{\beta}_n\},\,2n\pi)$ and that \[\lim_{\beta\rightarrow(2n\pi)^{-}}q\beta(1-\cos\beta)-\xi\beta-q\xi\sin\beta=-2n\pi\xi<0.
\]
It follows that $\lim_{\beta\rightarrow(2n\pi)^{-}}h_2(\beta)=+\infty$. Similarly, we have
 \[
  \xi\beta(1-\cos\beta)+\beta^2\sin\beta>0,
 \]
for $\beta\in (2(n-1)\pi,\,(2n-1)\pi)$ and  \[\lim_{\beta\rightarrow(2(n-1)\pi)^{+}}q\beta(1-\cos\beta)-\xi\beta-q\xi\sin\beta=-2(n-1)\pi\xi<0.
\]
It follows that $\lim_{\beta\rightarrow(2(n-1)\pi)^{+}}h_2(\beta)=-\infty$.
 
 Next we compute the one-sided limits of $h_2$ at $\beta_n^*$. Note by Lemma~\ref{lemma-3-3-2}, $\sin\beta_n^*=-\sqrt{1-\cos^2\beta_n^*}<0$ and
 \[
  \xi(1-\cos\beta_n^*)+\beta_n^*\sin\beta_n^*=0.
 \]It follows that $\xi(1-\cos\beta_n^*)-\beta_n^*\sqrt{1-\cos^2\beta_n^*}=0$ which leads to
 \[
  (\xi^2+{\beta_n^*}^2)\cos^2\beta_n^*-2\xi^2\cos\beta_n^*+ (\xi^2-{\beta_n^*}^2)=0,
 \] and hence $\cos\beta_n^*=\frac{\xi^2-{\beta_n^*}^2}{\xi^2+{\beta_n^*}^2}$. Then we have $\sin\beta_n^*=-\sqrt{1-\cos^2\beta_n^*}=-\frac{2{\beta_n^*}\xi}{\xi^2+{\beta_n^*}^2}$. With the expressions of $\sin\beta_n^*$ and $\cos\beta_n^*$, we have
 \begin{align}\label{numer-h2}
  q\beta_n^*(1-\cos\beta_n^*)-\xi\beta_n^*-q\xi\sin\beta_n^*=\frac{(2q-\xi)(\xi^2+{\beta_n^*}^2)\beta_n^*}{\xi^2+{\beta_n^*}^2},
 \end{align}which is positive if $\xi<2q$ and negative if $\xi>2q$.

  If $\xi<2q$, then by Lemma~\ref{lemma-3-3-2}, we have
 \begin{align}\label{demo-h2}
  \xi\beta(1-\cos\beta)+\beta^2\sin\beta=\begin{cases}
                                          >0, & \mbox{if $\beta\in ((2n-1)\pi,\,\,\beta_n^*)$},\\
                                          <0, & \mbox{if $\beta\in (\beta_n^*,\,2n\pi)$}.
                                         \end{cases}
 \end{align}
By (\ref{numer-h2})  and (\ref{demo-h2}), we obtain that
if $\xi<2q$, then we have $\lim_{\beta\rightarrow(\beta_n^*)^+}h_2(\beta)=-\infty$ and $\lim_{\beta\rightarrow(\beta_n^*)^-}h_2(\beta)=+\infty$.

If $\xi>2q$, then by Lemma~\ref{lemma-3-3-2}, we have
 \begin{align}\label{demo-h2-2}
  \xi\beta(1-\cos\beta)+\beta^2\sin\beta=\begin{cases}
                                          >0, & \mbox{if $\beta\in ((2n-1)\pi,\,\,\beta_n^*)$},\\
                                          <0, & \mbox{if $\beta\in (\beta_n^*,\,2n\pi)$}.
                                         \end{cases}
 \end{align}
By (\ref{numer-h2})  and (\ref{demo-h2-2}), we obtain that
if $\xi>2q$, then we have $\lim_{\beta\rightarrow(\beta_n^*)^+}h_2(\beta)=+\infty$ and $\lim_{\beta\rightarrow(\beta_n^*)^-}h_2(\beta)=-\infty$.

The last step is to compute the limit of $h_2$ at $\tilde{\beta}_n$. Note by Lemma~\ref{lemma-3-3-2}, $\sin\tilde{\beta}_n=-\sqrt{1-\cos^2\tilde{\beta}_n}<0$ and
 \[
  2q(1-\cos\tilde{\beta}_n)+\tilde{\beta}_n\sin\tilde{\beta}_n=0.
 \]It follows that $2q(1-\cos\tilde{\beta}_n)-\tilde{\beta}_n\sqrt{1-\cos^2\tilde{\beta}_n}=0$ which leads to
 \[
  (4q^2+{\tilde{\beta}_n}^2)\cos^2\tilde{\beta}_n-8q^2\cos\tilde{\beta}_n+ (4q^2-{\tilde{\beta}_n}^2)=0,
 \] and hence $\cos\tilde{\beta}_n=\frac{4q^2-{\tilde{\beta}_n}^2}{4q^2+{\tilde{\beta}_n}^2}$. Then we have $\sin\tilde{\beta}_n=-\sqrt{1-\cos^2\tilde{\beta}_n}=-\frac{4{\tilde{\beta}_n}q}{4q^2+{\tilde{\beta}_n}^2}$. With the expressions of $\sin\tilde{\beta}_n$ and $\cos\tilde{\beta}_n$, we have
 \begin{align}\label{numer-h2}
  q\tilde{\beta}_n(1-\cos\tilde{\beta}_n)-\xi\tilde{\beta}_n-q\xi\sin\tilde{\beta}_n=\frac{(2q-\xi)\tilde{\beta}_n^3}{4q^2+{\tilde{\beta}_n}^2},
 \end{align}which is positive if $\xi<2q$ and negative if $\xi>2q$. Then by (\ref{denom-h2}), we have $\lim_{\beta\rightarrow \tilde{\beta}_n}h_2<0$
 if $\xi<2q$ and  $\lim_{\beta\rightarrow \tilde{\beta}_n}h_2>0$ if $\xi>2q$. 
\qed
\end{proof}
Now we show that there are exactly two zeros of $h_2$ in $(2(n-1)\pi,\,2n\pi)$, $n\in\mathbb{N}$ under the assumption that $\xi<2q$.
\begin{theorem}\label{h_2-parameterization-2}
 Let $\xi$ and $q$ be positive numbers with $\xi< 2q$, $\beta_n^*$ and $\tilde{\beta}_n$ be the unique zeros of the equations $\xi(1-\cos\beta)+\beta\sin\beta=0$ and $2q(1-\cos\beta)+\beta\sin\beta=0$  in $(2(n-1)\pi,\,2n\pi)$, respectively. Let the map $h_2: (2(n-1)\pi,\,2n\pi)\rightarrow\mathbb{R}$ be defined at (\ref{delta-function}). Then $h_2$ has exactly two zeros $\gamma_n^*\in (2(n-1)\pi,\,\,\beta_n^*)$ and $\tilde{\gamma}_n\in (\tilde{\beta}_n,\,2n\pi)$. 
 Moreover, $h_2>0$ if and only if  $\beta\in (\gamma_n^*,\,\beta_n^*)\cup(\tilde{\gamma}_n,\,2n\pi)$.
\end{theorem}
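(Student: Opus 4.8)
The plan is to reduce the statement about the zeros of $h_2$ to a sign analysis of its numerator. Since $\beta>0$ on the interval, the denominator of $h_2$ in (\ref{delta-function}), namely $\xi\beta(1-\cos\beta)+\beta^2\sin\beta=\beta\bigl(\xi(1-\cos\beta)+\beta\sin\beta\bigr)$, vanishes in $(2(n-1)\pi,2n\pi)$ only at $\beta_n^*$ by Lemma~\ref{lemma-2-5}, and the computation in the proof of Lemma~\ref{h_2-parameterization} shows its numerator there equals $(2q-\xi)\beta_n^*\neq 0$. Hence on $(2(n-1)\pi,2n\pi)$ the zeros of $h_2$ coincide with those of
\[
N(\beta)=q\beta(1-\cos\beta)-\xi\beta-q\xi\sin\beta,
\]
so it suffices to count the zeros of $N$ and track its sign. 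I would record at the outset, from Lemma~\ref{lemma-3-3-2} with $\xi<2q$, that both $\beta_n^*$ and $\tilde\beta_n$ lie in the ``second half'' $((2n-1)\pi,2n\pi)$, with $(2n-1)\pi<\beta_n^*<\tilde\beta_n<2n\pi$.

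For existence I would evaluate $N$ at a few test points. Directly, $N((2n-1)\pi)=(2n-1)\pi(2q-\xi)>0$ and $N(2n\pi)=-2n\pi\xi<0$, while $N(\beta)<0$ for $\beta$ slightly larger than $2(n-1)\pi$ (for $n\ge 2$ since $N(2(n-1)\pi)=-2(n-1)\pi\xi<0$, and for $n=1$ from the expansion $N(\beta)=-\xi(1+q)\beta+O(\beta^3)$). The two identities derived in the proof of Lemma~\ref{h_2-parameterization} give $N(\beta_n^*)=(2q-\xi)\beta_n^*>0$ and $N(\tilde\beta_n)=\frac{(2q-\xi)\tilde\beta_n^3}{4q^2+\tilde\beta_n^2}>0$. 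The intermediate value theorem then yields a zero $\gamma_n^*\in(2(n-1)\pi,(2n-1)\pi)\subset(2(n-1)\pi,\beta_n^*)$ and a zero $\tilde\gamma_n\in(\tilde\beta_n,2n\pi)$.

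Once exactly these two zeros are established, the sign of $h_2$ follows mechanically: by Lemma~\ref{lemma-3-3-2} (as used in Lemma~\ref{h_2-parameterization}) the denominator $D(\beta)=\beta\bigl(\xi(1-\cos\beta)+\beta\sin\beta\bigr)$ is positive on $(2(n-1)\pi,\beta_n^*)$ and negative on $(\beta_n^*,2n\pi)$, while $N<0$ on $(2(n-1)\pi,\gamma_n^*)$, $N>0$ on $(\gamma_n^*,\tilde\gamma_n)$ and $N<0$ on $(\tilde\gamma_n,2n\pi)$. Multiplying the sign of $N$ by that of $1/D$ on each subinterval gives $h_2>0$ precisely on $(\gamma_n^*,\beta_n^*)\cup(\tilde\gamma_n,2n\pi)$, which is the claimed conclusion.

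The crux, and the step I expect to be the main obstacle, is proving that $N$ has \emph{no} zeros besides $\gamma_n^*$ and $\tilde\gamma_n$: in particular that $N>0$ throughout $((2n-1)\pi,\tilde\gamma_n)$, so that no spurious zero hides in $((2n-1)\pi,\beta_n^*)\cup(\beta_n^*,\tilde\beta_n)$, and that the second-half crossing is unique. My first device would be the ``derivative at a zero'' argument of Lemma~\ref{lemma-2-6}: using the relation $q\beta_0(1-\cos\beta_0)=\xi(\beta_0+q\sin\beta_0)$ that holds at any zero $\beta_0$ of $N$ (where $\beta_0+q\sin\beta_0>0$), one simplifies to
\[
\operatorname{sign}N'(\beta_0)=\operatorname{sign}\bigl(\beta_0^2\sin\beta_0+q(1-\cos\beta_0)(\beta_0+\sin\beta_0)\bigr).
\]
On the first half $\sin\beta_0>0$, so $N'(\beta_0)>0$ and $\gamma_n^*$ is the unique zero there. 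On the second half $\sin\beta_0<0$ and this sign is no longer automatic, so a finer argument is needed; here I would instead recast $N(\beta)=0$ as the level-set equation $1-\cos\beta-\xi\frac{\sin\beta}{\beta}=\frac{\xi}{q}$ and show that the left-hand side is unimodal on $((2n-1)\pi,2n\pi)$, rising from the value $2$ at $(2n-1)\pi$ and then decreasing monotonically to $0$ at $2n\pi$. Unimodality forces a single crossing of the constant level $\frac{\xi}{q}\in(0,2)$, and $N(\tilde\beta_n)>0$ places that crossing in $(\tilde\beta_n,2n\pi)$. Establishing this unimodality, equivalently that the associated derivative changes sign exactly once on the second half, is the genuinely delicate computation; the rest of the bookkeeping is routine.
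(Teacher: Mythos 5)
Your proposal is correct, and it reaches the theorem by a genuinely different route from the paper's. You work directly with the numerator $N(\beta)=q\beta(1-\cos\beta)-\xi\beta-q\xi\sin\beta$ and split the period: on $(2(n-1)\pi,(2n-1)\pi)$ a derivative-at-a-zero argument (the device of Lemma~\ref{lemma-2-6}; your identity $\operatorname{sign}N'(\beta_0)=\operatorname{sign}\bigl(\beta_0^2\sin\beta_0+q(1-\cos\beta_0)(\beta_0+\sin\beta_0)\bigr)$ checks out, using $\beta_0+q\sin\beta_0>0$, which is forced at any zero by $\xi>0$), and on $((2n-1)\pi,2n\pi)$ a level-set argument for $\phi(\beta)=1-\cos\beta-\xi\sin\beta/\beta=\xi/q$. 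The paper instead solves $h_2=0$ for $(\cos\beta,\sin\beta)$, squares, and lands on the two equations $\cot\frac{\beta}{2}=-\frac{q}{\beta}\mp q\sqrt{\frac{1}{\beta^2}+\frac{2}{q\xi}-\frac{1}{q^2}}$ of (\ref{h2-roots}); for $\xi<2q$ the radicand is positive everywhere, each right-hand side is increasing, and $\cot\frac{\beta}{2}$ decreases from $+\infty$ to $-\infty$ over the full period, so each equation has exactly one root and $h_2$ has at most two zeros in one stroke, with no half-period case split. The step you defer does close: $\phi'(\beta)=0$ is equivalent to $\tan\beta=\xi\beta/(\beta^2+\xi)$, whose right-hand side is positive, so on $((2n-1)\pi,(2n-\frac12)\pi)$, where $\tan\beta$ sweeps $(0,+\infty)$, there is exactly one critical point, and on $((2n-\frac12)\pi,2n\pi)$, where $\tan\beta<0$, there is none; hence $\phi$ rises from the value $2$ and then decreases to $0$, the level $\xi/q\in(0,2)$ is crossed exactly once, and the crossing lies past $\tilde\beta_n$ because $N(\tilde\beta_n)>0$. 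What your route buys is cleaner sign bookkeeping (note that the paper's own existence step misquotes Lemma~\ref{h_2-parameterization}: for $\xi<2q$ one has $h_2(\tilde\beta_n)<0$, which is exactly what the intermediate value theorem needs against $h_2\to+\infty$ at $(2n\pi)^-$); what the paper's route buys is that the apparatus (\ref{cos-h2-1})--(\ref{h2-roots}) is reused verbatim in Theorem~\ref{h_2-parameterization-3} for $\xi>2q$, where the radicand changes sign and your unimodality argument would have to be rebuilt.
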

\begin{proof} We first show the existence of the zeros of $h_2$. By Lemma~\ref{h_2-parameterization},   if $\xi<2q$, we have  \begin{align*}
\lim_{\beta\rightarrow(\beta_n^*)^-}h_2(\beta)=+\infty.\end{align*}Moreover we have
 \begin{align*}
\lim_{\beta\rightarrow(2(n-1)\pi)^+}h_2(\beta)=-\infty,\, 
\end{align*}By the intermediate value theorem and the continuity of $h_2$ in the interval $(2(n-1)\pi,\,\,\beta_n^*)$, $h_2$ has at least one zero $\gamma_n^*\in (2(n-1)\pi,\,\,\beta_n^*)$. 

Similarly, by  Lemma~\ref{h_2-parameterization}, we have $\lim_{\beta\rightarrow(2n\pi)^+}h_2(\beta)=+\infty$, and   $\lim_{\beta\rightarrow\tilde{\beta}_n}h_2(\beta)>0$ if $\xi<2q$.
By the intermediate value theorem and the continuity of $h_2$ in the interval $(\tilde{\beta}_n,\,2n\pi)$, $h_2$ has at least one zero $\tilde{\gamma}_n\in (\tilde{\beta}_n,\,2n\pi)$.

Next we show that $h_2$ has exactly two zeros in $(2(n-1)\pi,\,2n\pi)$. Let $h_2(\beta)=0$. We have
\[
 q\beta(1-\cos\beta)-\xi\beta-q\xi\sin\beta=0,
\]and hence $\sin\beta=\frac{\beta}{\xi}(1-\cos\beta)-\frac{\beta}{q}$. Then we have
\[
 \left(\frac{\beta}{\xi}(1-\cos\beta)-\frac{\beta}{q}\right)^2+\cos^2\beta=1,
\]which leads to
\begin{align}
\cos\beta=\frac{\frac{1}{\xi^2}-\frac{1}{q\xi}\pm\frac{1}{\beta}\sqrt{\frac{1}{\beta^2}+\frac{2}{q\xi}-\frac{1}{q^2}}}{\frac{1}{\xi^2}+\frac{1}{\beta^2}},\label{cos-h2-1}\intertext{and}
\sin\beta=\frac{\frac{1}{\beta\xi}+\frac{\beta}{q\xi^2}\mp\frac{1}{\xi}\sqrt{\frac{1}{\beta^2}+\frac{2}{q\xi}-\frac{1}{q^2}}}{\frac{1}{\xi^2}+\frac{1}{\beta^2}}-\frac{\beta}{q},\label{cos-h2-2}
\end{align} where  $\pm$ and $\mp$ indicate the correspondent plus and minus signs for two sets of the expressions of $(\cos\beta,\,\sin\beta)$. Then we have
\begin{align}\label{h2-roots}
 \cot\frac{\beta}{2}=\frac{\sin\beta}{1-\cos\beta}=-\frac{q}{\beta}\mp q\sqrt{\frac{1}{\beta^2}+\frac{2}{q\xi}-\frac{1}{q^2}}.
 \end{align}
Notice that the function $y=\cot\frac{\beta}{2}$ is decreasing in $(2(n-1)\pi,\,2n\pi)$ from $+\infty$ to $-\infty$ and that if $\xi<2q$,
\[
 \frac{\d}{\d\beta}\left(-\frac{q}{\beta}\mp q\sqrt{\frac{1}{\beta^2}+\frac{2}{q\xi}-\frac{1}{q^2}}\right)=\frac{q}{\beta^2}\pm\frac{q^2}{\beta^3}\frac{1}{\sqrt{\frac{1}{\beta^2}+\frac{2}{q\xi}-\frac{1}{q^2}}}>0,
\]which implies that  the functions $y=-\frac{q}{\beta}\mp q\sqrt{\frac{1}{\beta^2}+\frac{2}{q\xi}-\frac{1}{q^2}}$ increases in $(0,\,+\infty)$ from negative to $0$. Therefore, each of the equations at (\ref{h2-roots}) has a unique solution in $(2(n-1)\pi,\,2n\pi)$.

Finally the assertion that $h_2>0$ holds if and only if  $\beta\in (2(n-1)\pi,\,\,\gamma_n^*)\cup(\tilde{\gamma}_n,\,2n\pi)$ follows from the limits of $h_2$ obtained at Lemma~\ref{h_2-parameterization} and the uniqueness of the zeros of $h_2$ in the intervals $ (2(n-1)\pi,\,\,\beta_n^*)$ and $(\beta_n^*,\,2n\pi)$.  \qed
\end{proof}
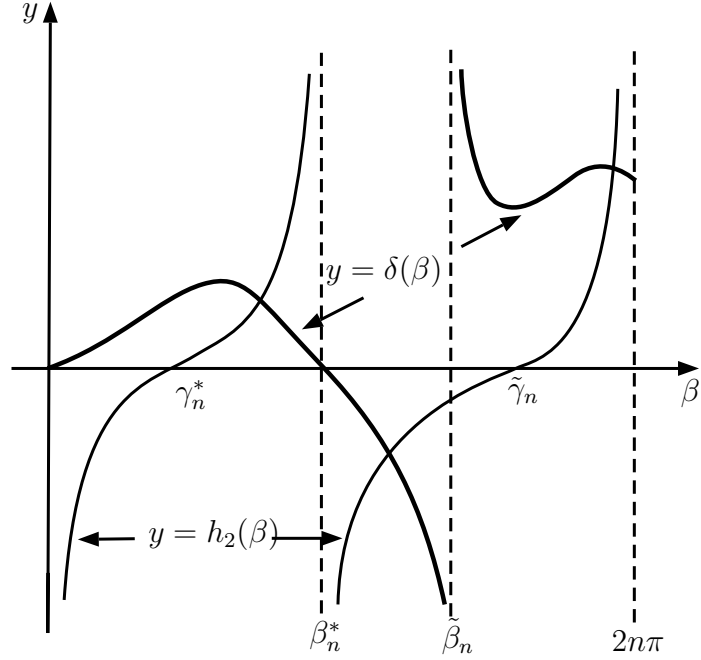
\begin{figure}
\begin{center}
\psscalebox{1.0 1.0} 
{
\begin{pspicture}(0,-4.305)(9.140031,4.305)
\psline[linecolor=black, linewidth=0.04, arrowsize=0.05291667cm 2.0,arrowlength=1.4,arrowinset=0.0]{<-}(0.517327,4.374964)(0.482673,-4.0249643)(0.48597336,-3.224971)
\psline[linecolor=black, linewidth=0.04, arrowsize=0.05291667cm 2.0,arrowlength=1.4,arrowinset=0.0]{->}(0.0,-0.505)(9.14,-0.505)
\psline[linecolor=black, linewidth=0.04, linestyle=dashed, dash=0.17638889cm 0.10583334cm](8.28,3.715)(8.28,-3.885)
\psline[linecolor=black, linewidth=0.04, linestyle=dashed, dash=0.17638889cm 0.10583334cm](4.12,3.695)(4.12,-3.765)
\psline[linecolor=black, linewidth=0.04, linestyle=dashed, dash=0.17638889cm 0.10583334cm](5.84,3.735)(5.84,-3.805)
\psbezier[linecolor=black, linewidth=0.04](3.96,3.415)(3.7304747,0.14498419)(3.0456886,0.03734841)(2.4271681,-0.33995037904359376)(1.8086478,-0.71724916)(0.88563037,-0.7652532)(0.7,-3.585)
\psbezier[linecolor=black, linewidth=0.04](8.06,3.215)(7.973325,-0.38533452)(6.980326,-0.36354187)(6.690818,-0.5106068289384712)(6.4013095,-0.6576718)(4.3951726,-1.2111785)(4.34,-3.645)
\psbezier[linecolor=black, linewidth=0.06](0.48,-0.505)(1.405049,-0.18011697)(1.9800149,0.4495356)(2.54,0.615)(3.0999851,0.7804644)(3.245049,0.45988303)(4.14,-0.485)(5.034951,-1.429883)(5.479985,-2.3395357)(5.76,-3.645)
\psbezier[linecolor=black, linewidth=0.06](5.98,3.475)(5.98211,3.0045302)(6.1648407,1.8500073)(6.458247,1.6927777777777875)(6.751653,1.5355482)(7.043943,1.7293786)(7.478008,2.054394)(7.912073,2.3794093)(8.299462,1.983225)(8.3,1.995)
\rput[bl](3.96,-4.265){$\beta_n^*$}
\rput[bl](5.72,-4.305){$\tilde{\beta}_n$}
\rput[bl](7.98,-4.265){$2n\pi$}
\rput[bl](2.18,-1.045){$\gamma_n^*$}
\rput[bl](6.6,-0.985){$\tilde{\gamma}_n$}
\rput[bl](8.9,-1.025){$\beta$}
\rput[bl](0.14,4.095){$y$}
\rput[bl](1.82,-2.925){$y=h_2(\beta)$}
\psline[linecolor=black, linewidth=0.04, arrowsize=0.05291667cm 2.0,arrowlength=1.4,arrowinset=0.0]{<-}(4.4,-2.765)(3.46,-2.765)
\psline[linecolor=black, linewidth=0.04, arrowsize=0.05291667cm 2.0,arrowlength=1.4,arrowinset=0.0]{<-}(0.88137656,-2.779742)(1.6386235,-2.770258)
\rput[bl](4.18,0.575){$y=\delta(\beta)$}
\psline[linecolor=black, linewidth=0.04, arrowsize=0.05291667cm 2.0,arrowlength=1.4,arrowinset=0.0]{->}(4.68,0.435)(3.86,0.015)
\psline[linecolor=black, linewidth=0.04, arrowsize=0.05291667cm 2.0,arrowlength=1.4,arrowinset=0.0]{->}(5.98,1.115)(6.78,1.535)
\end{pspicture}
}
\end{center}\caption{ Graphs of $y=\delta(\beta)=\frac{\xi\beta^2(1-\cos\beta)+\beta^3\sin\beta}{2q(1-\cos\beta)+\beta\sin\beta}$ and $y=h_2(\beta)=-\frac{q\beta(1-\cos\beta)-\xi\beta-q\xi\sin\beta}{\xi\beta(1-\cos\beta)+\beta^2\sin\beta}$ with $\xi<2q$.}
\label{delta-h2-positivity}
\end{figure}

 An immediate consequence of Theorems~\ref{delta-parameterization} and \ref{h_2-parameterization-2} is the following parameterization of $(\delta,\,h_2)$ where $\delta>0$ and $h_2>0$. See Figure~\ref{delta-h2-positivity}
for a demonstration.
 \begin{theorem}\label{delta-h2-parameterization}
  Let $\xi$ and $q$ be positive numbers with $\xi< 2q$, $\beta_n^*$ and $\tilde{\beta}_n$ be the unique zeros of the equations $\xi(1-\cos\beta)+\beta\sin\beta=0$ and $2q(1-\cos\beta)+\beta\sin\beta=0$  in $(2(n-1)\pi,\,2n\pi)$, respectively. Let the map $(\delta, h_2): (2(n-1)\pi,\,2n\pi)\rightarrow\mathbb{R}^2$ be defined at (\ref{delta-def}) and (\ref{delta-function}), respectively. Then $(\delta,\,h_2)(\beta)$ is positive if and only if 
  \[
  \beta\in (\gamma_n^*,\,\beta_n^*)\cup(\tilde{\gamma}_n,\,2n\pi),
  \] where  $\gamma_n^*\in (2(n-1)\pi,\,\,\beta_n^*)$ and $\tilde{\gamma}_n\in (\tilde{\beta}_n,\,2n\pi)$ are two zeros of $h_2$ in $I_n$.
 \end{theorem}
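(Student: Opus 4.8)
The plan is to read the claim off as a direct set-theoretic consequence of the two preceding theorems, since the positivity of the pair $(\delta, h_2)$ at a point $\beta$ means exactly the simultaneous positivity of each coordinate. Restricting to the interval $I_n = (2(n-1)\pi, 2n\pi)$, I would first record, from Theorem~\ref{delta-parameterization}~i) (applicable because the standing hypothesis here is $\xi < 2q$), that $\delta(\beta) > 0$ precisely on $(2(n-1)\pi, \beta_n^*) \cup (\tilde{\beta}_n, 2n\pi)$. Next I would record, from Theorem~\ref{h_2-parameterization-2}, that $h_2(\beta) > 0$ precisely on $(\gamma_n^*, \beta_n^*) \cup (\tilde{\gamma}_n, 2n\pi)$, together with the crucial interval memberships $\gamma_n^* \in (2(n-1)\pi, \beta_n^*)$ and $\tilde{\gamma}_n \in (\tilde{\beta}_n, 2n\pi)$ supplied by that same theorem.

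The key observation is then the nesting of the two positivity sets. Because $\gamma_n^* > 2(n-1)\pi$ we have $(\gamma_n^*, \beta_n^*) \subset (2(n-1)\pi, \beta_n^*)$, and because $\tilde{\gamma}_n > \tilde{\beta}_n$ we have $(\tilde{\gamma}_n, 2n\pi) \subset (\tilde{\beta}_n, 2n\pi)$. Hence the set on which $h_2 > 0$ is entirely contained in the set on which $\delta > 0$, so intersecting the two regions collapses the simultaneous positivity region to the $h_2 > 0$ region alone, namely $(\gamma_n^*, \beta_n^*) \cup (\tilde{\gamma}_n, 2n\pi)$, which is the asserted conclusion. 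For completeness I would confirm that no boundary point slips in: the endpoints $\beta_n^*$ and $2n\pi$ are excluded because there $\delta$ vanishes or is singular while $h_2 \to +\infty$, and $\gamma_n^*$, $\tilde{\gamma}_n$ are excluded as genuine zeros of $h_2$.

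There is no genuine analytic obstacle at this stage; the theorem is a corollary and the only point requiring care is the verification of the interval nesting, which is immediate from the locations of $\gamma_n^*$, $\tilde{\gamma}_n$, $\beta_n^*$ and $\tilde{\beta}_n$ already established in Theorems~\ref{delta-parameterization} and \ref{h_2-parameterization-2}. Figure~\ref{delta-h2-positivity} supplies the visual confirmation that the positive arcs of $\delta$ and $h_2$ overlap exactly on the two claimed subintervals.
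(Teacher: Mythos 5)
Your proposal is correct and is essentially the argument the paper intends: the paper states this theorem as an immediate consequence of Theorems~\ref{delta-parameterization} and \ref{h_2-parameterization-2} without writing out the intersection, and you have simply made that explicit, using the nesting $(\gamma_n^*,\beta_n^*)\subset(2(n-1)\pi,\beta_n^*)$ and $(\tilde{\gamma}_n,2n\pi)\subset(\tilde{\beta}_n,2n\pi)$ (together with $\beta_n^*<\tilde{\beta}_n$ from Lemma~\ref{lemma-3-3-2}, which keeps the cross intersections empty). No gaps.
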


Now we consider the zeros of $h_2$ under the assumption that $\xi>2q$.
\begin{theorem}\label{h_2-parameterization-3}
 Let $\xi$ and $q$ be positive numbers with $\xi>2q$, and $n_0\in\mathbb{N}$ be such that
 \[
  2(n_0-1)\pi<q\sqrt{\frac{\xi}{\xi-2q}}\leq 2n_0\pi.
 \] Let 
$\beta_n^*$ and $\tilde{\beta}_n$ be the unique zeros of the equations $\xi(1-\cos\beta)+\beta\sin\beta=0$ and $2q(1-\cos\beta)+\beta\sin\beta=0$  in $(2(n-1)\pi,\,2n\pi)$, respectively. Let the map $h_2: (2(n-1)\pi,\,2n\pi)\rightarrow\mathbb{R}$ be defined at (\ref{delta-function}).  The following are true:
\begin{enumerate}
 \item[i)] For every $n>n_0$, $n\in\mathbb{N}$, 
 $
  h_2(\beta)>0
 $ if and only if $\beta\in (\beta_n^*,\,2n\pi)$.
 \item[ii)] For every $1\leq n\leq n_0$, $n\in\mathbb{N}$, with
  \[
  2(n-1)\pi<q\sqrt{\frac{\xi}{\xi-2q}}\leq (2n-1)\pi.
 \]
 $
  h_2(\beta)>0
 $ if and only if $\beta\in (\beta_n^*,\,2n\pi)$.
 \item[iii)]  For every $1\leq n\leq n_0$, $n\in\mathbb{N}$, with
  \[
  (2n-1)\pi<q\sqrt{\frac{\xi}{\xi-2q}},
 \] let $\bar{\beta}_n$ be the unique solution of $\beta\cot\beta=-q$ in $(2(n-1)\pi,\,2n\pi)$. 
 \begin{enumerate}
  \item[a)] If $\bar{\beta}_n>q\sqrt{\frac{\xi}{\xi-2q}}$, then
 $
  h_2(\beta)>0
 $ if and only if $\beta\in (\beta_n^*,\,2n\pi)$.
  \item[b)] If $\bar{\beta}_n\leq q\sqrt{\frac{\xi}{\xi-2q}}$, then $h_2$ has exactly two zeros $\gamma_n^*$ and $\tilde{\gamma}_n$ in $(2(n-1)\pi,\,\tilde{\beta}_n)$. Moreover,
 $
  h_2(\beta)>0
 $ if and only if $\beta\in (\gamma_n^*,\,\tilde{\gamma}_n)\cup(\beta_n^*,\,2n\pi)$.
 \end{enumerate}
 
\end{enumerate}

\end{theorem}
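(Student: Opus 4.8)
The plan is to locate the zeros of $h_2$ on $I_n=(2(n-1)\pi,\,2n\pi)$ by the same elimination used in Theorem~\ref{h_2-parameterization-2}, and then to recover the sign of $h_2$ from those zeros together with the one-sided limits recorded in Lemma~\ref{h_2-parameterization}. A zero of $h_2$ is a zero of its numerator $q\beta(1-\cos\beta)-\xi\beta-q\xi\sin\beta$, and eliminating $\sin\beta,\cos\beta$ through $\sin^2\beta+\cos^2\beta=1$ reduces this to the two equations (\ref{h2-roots}),
\[
\cot\frac{\beta}{2}=-\frac{q}{\beta}\mp q\sqrt{\frac{1}{\beta^2}+\frac{2q-\xi}{q^2\xi}}.
\]
Writing $\phi_{\pm}(\beta)=-\frac{q}{\beta}\pm q\sqrt{\frac{1}{\beta^2}+\frac{2q-\xi}{q^2\xi}}$ for the two right-hand sides, the hypothesis $\xi>2q$ makes the radicand nonnegative exactly on $\beta\le T:=q\sqrt{\xi/(\xi-2q)}$, so every zero of $h_2$ obeys $\beta\le T$. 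Moreover $\phi_{\pm}<0$ on their whole domain while $\cot\frac{\beta}{2}\ge 0$ on $(2(n-1)\pi,\,(2n-1)\pi]$; hence every zero in $I_n$ lies in the window $((2n-1)\pi,\,\min\{T,\,2n\pi\})$. This single localization organizes all the cases.

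First I dispose of i) and ii). The defining relation $2(n_0-1)\pi<T\le 2n_0\pi$ forces $(2n-1)\pi\ge T$ whenever $n>n_0$, and in ii) the hypothesis is exactly $T\le(2n-1)\pi$; in both situations the window is empty and $h_2$ has no zero in $I_n$. By Lemma~\ref{lemma-3-3-2} the only interior pole of $h_2$ in $I_n$ is $\beta_n^*$, so $h_2$ has constant sign on each of $(2(n-1)\pi,\,\beta_n^*)$ and $(\beta_n^*,\,2n\pi)$; the limits of Lemma~\ref{h_2-parameterization}, which are $-\infty$ at both ends of the left piece and $+\infty$ at both ends of the right piece, then give $h_2<0$ on the left and $h_2>0$ on the right, i.e. $h_2(\beta)>0$ iff $\beta\in(\beta_n^*,\,2n\pi)$.

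For iii) I would use the monotonicity of the two branches on the window. Differentiation shows that for $\xi>2q$ the function $\phi_-$ is increasing and $\phi_+$ is decreasing, and the two coalesce at the value $-q/T$ when $\beta=T$. Let $\bar\beta_n$ be the unique root in $((2n-1)\pi,\,2n\pi)$ of $\beta\cot\frac{\beta}{2}=-q$, i.e. the abscissa where $\cot\frac{\beta}{2}$ meets the common center $-q/\beta$ of the two branches. Because $\beta\cot\frac{\beta}{2}$ is strictly decreasing on $((2n-1)\pi,\,2n\pi)$ and the abscissae $\bar\beta_n,\tilde\beta_n,\beta_n^*$ solve $\beta\cot\frac{\beta}{2}=-q,\,-2q,\,-\xi$ respectively, we have $\bar\beta_n<\tilde\beta_n<\beta_n^*$. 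If $\bar\beta_n>T$ (sub-case a), then throughout the window $\cot\frac{\beta}{2}>-q/\beta>\phi_-$, and a short monotonicity estimate shows $\cot\frac{\beta}{2}$ also stays above the decreasing branch $\phi_+$; hence there is no zero and again $h_2>0$ iff $\beta\in(\beta_n^*,\,2n\pi)$. If $\bar\beta_n\le T$ (sub-case b), then $\cot\frac{\beta}{2}$ passes from above $\phi_+$ at $(2n-1)\pi$ (where $\cot\frac{\beta}{2}=0>\phi_+$) to below it at $\bar\beta_n$, yielding a first zero $\gamma_n^*\in((2n-1)\pi,\,\bar\beta_n)$, and it then crosses the increasing branch $\phi_-$ exactly once on $(\bar\beta_n,\,\min\{T,2n\pi\})$, yielding $\tilde\gamma_n$. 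The elementary inequality $\beta^2\left(\frac{1}{\beta^2}+\frac{2q-\xi}{q^2\xi}\right)<1$ gives $\tilde\gamma_n\cot\frac{\tilde\gamma_n}{2}>-2q$, hence $\tilde\gamma_n<\tilde\beta_n$, so both zeros lie in $(2(n-1)\pi,\,\tilde\beta_n)$. The limits of Lemma~\ref{h_2-parameterization} then give $h_2>0$ on $(\gamma_n^*,\,\tilde\gamma_n)\cup(\beta_n^*,\,2n\pi)$.

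The main obstacle will be the exact counting of crossings: because the passage to (\ref{h2-roots}) involved squaring, I must confirm that each geometric intersection of $\cot\frac{\beta}{2}$ with $\phi_{\pm}$ is a genuine zero of the original numerator and that the unused branch contributes no spurious root, and I must prove strict monotonicity of $\cot\frac{\beta}{2}-\phi_{\pm}$ on the window to guarantee uniqueness of $\gamma_n^*$ and $\tilde\gamma_n$ and the no-crossing claim in sub-case a). I expect to settle these points exactly as in Lemma~\ref{lemma-2-6} and Theorem~\ref{h_2-parameterization-2}, by differentiating the relevant inverse functions and using $\sin\beta<0$ on $((2n-1)\pi,\,2n\pi)$ to select the correct branch.
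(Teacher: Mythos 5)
Your proposal is correct and follows essentially the same route as the paper's proof: localization of all candidate zeros to $\bigl((2n-1)\pi,\,\min\{q\sqrt{\xi/(\xi-2q)},\,2n\pi\}\bigr)$ via the discriminant of (\ref{h2-roots}) and the sign of $\cot\frac{\beta}{2}$, the pivot $\bar{\beta}_n$, monotone analysis of the two branches (the paper's $G$ and $H$ are exactly $\beta\phi_+$ and $\beta\phi_-$), and the inverse-function derivative-at-a-zero computation (the paper's (\ref{h2-roots-5})) to rule out extra crossings of the decreasing branch. The only cosmetic difference is that in sub-case a) you exclude the $\phi_-$ branch directly from $\cot\frac{\beta}{2}>-q/\beta\geq\phi_-$, where the paper instead compares $F^{-1}$ and $H^{-1}$ at the values $-q$ and $-2q$.
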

\begin{proof} i) By (\ref{cos-h2-1}) at the proof of Theorem~\ref{h_2-parameterization-2}, we know that if 
\begin{align}\label{discriminant}
 \frac{1}{\beta_0^2}+\frac{2}{q\xi}-\frac{1}{q^2}<0,
\end{align}
that is, $\beta_0>q\sqrt{\frac{\xi}{\xi-2q}}$, then $\beta_0$ is not a zero of $h_2$. Therefore, for every $n>n_0$, where $n_0\in\mathbb{N}$ is such that $2n_0\pi\geq q\sqrt{\frac{\xi}{\xi-2q}}$, 
 $
  h_2(\beta)>0
 $ if and only if $\beta\in (\beta_n^*,\,2n\pi)$.
 
 ii) For every $1\leq n\leq n_0$, $n\in\mathbb{N}$, with
  \[
  2(n-1)\pi<q\sqrt{\frac{\xi}{\xi-2q}}\leq (2n-1)\pi,
 \]we know from (\ref{discriminant}) that all possible zeros of $h_2$ in $(2(n-1)\pi,\,2n\pi)$ are in the interval $\left(2(n-1)\pi,\,q\sqrt{\frac{\xi}{\xi-2q}}\,\right]$. But by (\ref{h2-roots}), we know that $y=\cot\frac{\beta}{2}$ has no intersection in this interval  with  the functions $y=-\frac{q}{\beta}\mp q\sqrt{\frac{1}{\beta^2}+\frac{2}{q\xi}-\frac{1}{q^2}}$ which increase in $(0,\,+\infty)$ from negative to $0$.
 Therefore, in this case $h_2$ has no zero in $(2(n-1)\pi,\,2n\pi)$ and $
  h_2(\beta)>0
 $ if and only if $\beta\in (\beta_n^*,\,2n\pi)$.
 
 iii) a) We know from the derivation for (\ref{h2-roots}) that $\beta\in (2(n-1)\pi,\,2n\pi)$ is a zero of $h_2$ if and only if
  (\ref{h2-roots}) is satisfied. Multiply both sides of (\ref{h2-roots}) by $\beta$ and define the functions, $F: (2(n-1)\pi,\,2n\pi)\rightarrow\mathbb{R}$,\, $G: \left((2n-1)\pi,\,q\sqrt{\frac{\xi}{\xi-2q}}\right)\rightarrow(-q,\,0)$ and  and $H: \left((2n-1)\pi,\,q\sqrt{\frac{\xi}{\xi-2q}}\right)\rightarrow(-2q,\,-q)$ by
  \begin{align}
   y= & F(\beta)=\beta\cot\frac{\beta}{2},\label{fgh-1}\\
   y= & G(\beta)= -q+ \sqrt{q^2-\left(\frac{\xi-2q}{\xi}\right)\beta^2},\label{fgh-2}\\
     y= & H(\beta)= -q- \sqrt{q^2-\left(\frac{\xi-2q}{\xi}\right)\beta^2}.\label{fgh-3}
  \end{align}
Then the zeros of $h_2$ are the zeros of $F-G$ and $F-H$ in the interval $(2(n-1)\pi,\,2n\pi)$.  See Figure~\ref{lemma352-fig} for the graphs of $F$, $G$ and $H$.
\begin{figure}[t]
\begin{center}
\psscalebox{1.0 1.0} 
{
\begin{pspicture}(0,-4.793479)(10.366958,4.793479)
\psline[linecolor=black, linewidth=0.04, arrowsize=0.05291667cm 2.0,arrowlength=1.4,arrowinset=0.0]{->}(0.86,-4.633479)(0.86,4.886521)
\psline[linecolor=black, linewidth=0.04, arrowsize=0.05291667cm 2.0,arrowlength=1.4,arrowinset=0.0]{->}(0.06,-0.273479)(10.46,-0.273479)
\psline[linecolor=black, linewidth=0.04, linestyle=dashed, dash=0.17638889cm 0.10583334cm](9.66,-4.193479)(9.66,4.486521)
\psline[linecolor=black, linewidth=0.04, linestyle=dashed, dash=0.17638889cm 0.10583334cm](5.32,-4.333479)(5.32,4.346521)
\psbezier[linecolor=black, linewidth=0.04](0.89527947,-4.2892413)(6.8703475,-4.6138926)(9.399724,-3.280002)(9.19623,-2.1748401552332144)(8.992737,-1.0696782)(6.486736,-0.40966958)(0.8807502,-0.31126592)
\psbezier[linecolor=black, linewidth=0.04](1.7603544,4.0220437)(1.7311239,0.060011756)(4.9849715,-0.27203125)(5.4139123,-0.2549059100196973)(5.8428535,-0.23778059)(8.692181,-0.8966086)(8.78168,-4.2331715)
\rput[bl](4.6,-4.793479){$(2n-1)\pi$}
\rput[bl](8.56,-0.053479005){$\tilde{\gamma}_n$}
\pscircle[linecolor=black, linewidth=0.04, fillstyle=solid,fillcolor=black, dimen=outer](8.6430435,-3.1404355){0.08304354}
\rput{-1.3019527}(0.020757874,0.15997073){\pscircle[linecolor=black, linewidth=0.04, fillstyle=solid,fillcolor=black, dimen=outer](7.05,-0.833479){0.09}}
\pscircle[linecolor=black, linewidth=0.04, fillstyle=solid,fillcolor=black, dimen=outer](9.18,-2.153479){0.08}
\psline[linecolor=black, linewidth=0.04, linestyle=dashed, dash=0.17638889cm 0.10583334cm](7.04,-0.893479)(7.04,-0.293479)
\psline[linecolor=black, linewidth=0.04, linestyle=dashed, dash=0.17638889cm 0.10583334cm](9.18,-2.233479)(9.18,0.726521)
\psline[linecolor=black, linewidth=0.04, linestyle=dashed, dash=0.17638889cm 0.10583334cm](8.67599,-0.3732503)(8.672706,-3.1624029)
\rput[bl](1.9,3.246521){$y=\beta\cot\frac{\beta}{2}$}
\rput[bl](10.14,-0.833479){$\beta$}
\rput[bl](0.46,4.506521){$y$}
\rput[bl](9.36,-4.653479){$2n\pi$}
\rput[bl](8.28,0.806521){$q\sqrt{\frac{\xi}{\xi-2q}}$}
\rput[bl](6.82,-0.053479005){$\gamma_n^*$}
\rput[bl](1.32,-4.273479){$\frac{(y+q)^2}{q^2}+\frac{\beta^2}{\frac{q^2}{1-\frac{2q}{\xi}}}=1$}
\psline[linecolor=black, linewidth=0.04, linestyle=dashed, dash=0.17638889cm 0.10583334cm](9.08716,-2.133779)(0.8928401,-2.173179)
\rput[bl](0.2,-2.273479){$-q$}
\rput[bl](0.0,-4.373479){$-2q$}
\psline[linecolor=black, linewidth=0.04, linestyle=dashed, dash=0.17638889cm 0.10583334cm](8.230249,-0.2932845)(8.258446,-2.1223688)
\pscircle[linecolor=black, linewidth=0.04, fillstyle=solid,fillcolor=black, dimen=outer](8.24,-2.113479){0.08}
\rput[bl](8.02,-0.113479){$\bar{\beta}_n$}
\end{pspicture}
}
\caption{The graphs of $y=F(\beta)=\beta\cot\frac{\beta}{2}$, and those of $y=G(\beta)$, $y=H(\beta)$ which are the upper and lower part of the curve $\frac{(y+q)^2}{q^2}+\frac{\beta^2}{\left(\frac{q^2}{1-\frac{2q}{\xi}}\right)}=1$, $\beta>0$, respectively.}
\label{lemma352-fig}
\end{center}
\end{figure}
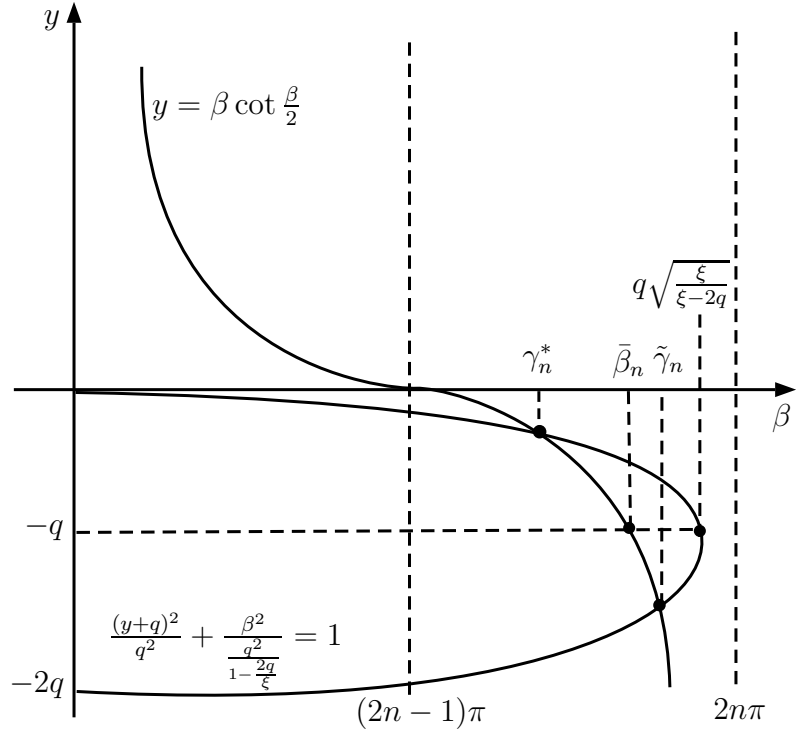

Notice that in the domains of $F,\,G$ and $H$, we have
\begin{align}
 \frac{\d}{\d\beta}F(\beta)= & \frac{\sin\beta-\beta}{1-\cos\beta}<0,\label{inv-fgh-1}\\
  \frac{\d}{\d\beta}G(\beta)= & -\frac{\left(\frac{\xi-2q}{\xi}\right)\beta}{\sqrt{q^2-\left(\frac{\xi-2q}{\xi}\right)\beta^2}}<0, \label{inv-fgh-2}\\
   \frac{\d}{\d\beta}H(\beta)= & \frac{\left(\frac{\xi-2q}{\xi}\right)\beta}{\sqrt{q^2-\left(\frac{\xi-2q}{\xi}\right)\beta^2}}>0. \label{inv-fgh-3}
\end{align}Then their inverses exist.  Notice that we have
\begin{align*}
 F^{-1}(0)= & (2n-1)\pi> 0=g^{-1}(0),\\
  F^{-1}(-q)= & \bar{\beta}_n>q\sqrt{\frac{\xi}{\xi-2q}}=G^{-1}(-q).
\end{align*}
By continuity of $F^{-1}$ and $G^{-1}$, if  $F^{-1}-G^{-1}$ has a zero, then there must be at least two and the derivative $\frac{\d}{\d\beta}(F^{-1}-G^{-1}) $ evaluated at the zeros cannot be all negative or all positive. To obtain a contradiction, we compute the derivative $\frac{\d}{\d\beta}(F^{-1}-G^{-1}) $ evaluated at the zeros and have
\begin{align}
 \frac{\d}{\d\beta}(F^{-1}-G^{-1})(y)& = \frac{1}{F'(F^{-1}(y))}-\frac{1}{G'(G^{-1}(y))}\notag\\
 & =\frac{1-\cos\beta}{\sin\beta-\beta}+\frac{\sqrt{q^2-\left(\frac{\xi-2q}{\xi}\right)\beta^2}}{\left(\frac{\xi-2q}{\xi}\right)\beta}.\label{h2-roots-3}
\end{align}
Note that for every zero $y_0$ of $F^{-1}-G^{-1}$, there exists $\beta_0$ such that $F^{-1}(y_0)=G^{-1}(y_0)=\beta_0.$ That is, $F(\beta)=G(\beta)$ which leads to
\begin{align}\label{h2-roots-4}
 \beta_0\cot\frac{\beta_0}{2}=-q+\sqrt{q^2-\left(\frac{\xi-2q}{\xi}\right)\beta_0^2}.
\end{align} By (\ref{h2-roots-3}) and (\ref{h2-roots-4}), we have
\begin{align}
 \frac{\d}{\d\beta}(F^{-1}-G^{-1})(y_0)& = \frac{1}{F'(F^{-1}(y_0))}-\frac{1}{G'(G^{-1}(y_0))}\notag\\
 & =\frac{1-\cos\beta_0}{\sin\beta_0-\beta_0}+\frac{\frac{\beta_0\sin\beta_0}{1-\cos\beta_0}+q}{\left(\frac{\xi-2q}{\xi}\right)\beta_0}\notag\\
 & =\frac{\beta_0(1-\cos\beta_0)\left(\frac{\xi-2q}{\xi}\right)+\sqrt{q^2-\left(\frac{\xi-2q}{\xi}\right)\beta_0^2}}{\left(\frac{\xi-2q}{\xi}\right)(\sin\beta_0-\beta_0)\beta_0}<0.\label{h2-roots-5}
\end{align} Namely, the derivatives of $F^{-1}-G^{-1}$ at the zeros are all negative. This is impossible and $F^{-1}-G^{-1}$ has no zero.

Next we turn to $F^{-1}-H^{-1}$. By (\ref{inv-fgh-1}) and (\ref{inv-fgh-3}) we know that $F^{-1}-H^{-1}$ is decreasing.
Note that 
\begin{align*}
  F^{-1}(-q)= & \bar{\beta}_n>q\sqrt{\frac{\xi}{\xi-2q}}=H^{-1}(-q),\\
 F^{-1}(-2q)= & \tilde{\beta}_n>0=H^{-1}(-2q),
\end{align*}where $\tilde{\beta}_n$ is the unique zero of $2q(1-\cos\beta)+\beta\sin\beta=0$ in $(2(n-1)\pi,\,2n\pi)$. By the continuity of $F^{-1}-H^{-1}$, $F^{-1}-H^{-1}$ has no zero in 
$(2(n-1)\pi,\,2n\pi)$ and $h_2>0$ if and only if $\beta\in (\beta_n^*,\,2n\pi)$. This completes the proof of 
iii) a).

iii) b) Let $F,\,G$ and $H$ be the same as are defined at (\ref{fgh-1}), (\ref{fgh-2}) and (\ref{fgh-3}). Since $\bar{\beta}_n\leq q\sqrt{\frac{\xi}{\xi-2q}}$, we have
\begin{align}
F^{-1}(0)= &\, 2(n-1)\pi>0=G^{-1}(0),\label{inv-end-points-1}\\
  F^{-1}(-q)= &\, \bar{\beta}_n\leq q\sqrt{\frac{\xi}{\xi-2q}}=G^{-1}(-q).\label{inv-end-points-2}
\end{align}By continuity of $F^{-1}-G^{-1}$ and by the intermediate value theorem, there exists at least one zero $\gamma_n^*\in (2(n-1)\pi,\,2n\pi)$. Next we show that $\gamma_n^*<\tilde{\beta}_n$.

We have
\begin{align*}
\tilde{\beta}_n\cot\frac{\tilde{\beta}_n}{2}=& -2q,\\
 \gamma_n^*\cot\frac{\gamma_n^*}{2}=&-q+\sqrt{q^2-\left(\frac{\xi-2q}{\xi}\right)(\gamma_n^*)^2}.
\end{align*}Then, we have
\[
 \tilde{\beta}_n\cot\frac{\tilde{\beta}_n}{2}=-2q<-q+\sqrt{q^2-\left(\frac{\xi-2q}{\xi}\right)(\gamma_n^*)^2}=\gamma_n^*\cot\frac{\gamma_n^*}{2}.
\]It follows that  that $\gamma_n^*<\tilde{\beta}_n<\beta_n^*$ since the map $f:\beta\rightarrow\beta\cot\frac{\beta}{2}$ is decreasing in $ (2(n-1)\pi,\,2n\pi)$.

Next we show the uniqueness of the zero of $F^{-1}-G^{-1}$. Suppose $F^{-1}-G^{-1}$ has more than two zeros. Then there must be at least three, counting multiplicity since $F^{-1}-G^{-1}$ is continuous in  $(-q,\,0)$ and (\ref{inv-end-points-1}) and (\ref{inv-end-points-2}) are satisfied. But by (\ref{h2-roots-5})
 the derivatives of $F^{-1}-G^{-1}$ at the zeros are all negative, which is impossible. Therefore, the zero of $F^{-1}-G^{-1}$ is unique.
 
 Next we turn to $F^{-1}-H^{-1}$. 
 Since $\bar{\beta}_n\leq q\sqrt{\frac{\xi}{\xi-2q}}$, we have
\begin{align} 
F^{-1}(-2q)> &\, 0=H^{-1}(-2q),\label{inv-end-points-3}\\
  F^{-1}(-q)= &\, \bar{\beta}_n\leq q\sqrt{\frac{\xi}{\xi-2q}}=H^{-1}(-q).\label{inv-end-points-4}
\end{align}By continuity of $F^{-1}-H^{-1}$ and by the intermediate value theorem, there exists at least one zero $\tilde{\gamma}_n\in (2(n-1)\pi,\,2n\pi)$. Next we show that $\beta_n^*>\tilde{\gamma}_n>\gamma_n^*$.

We have
\begin{align*}
 \tilde{\gamma}_n\cot\frac{\tilde{\gamma}_n}{2}=&-q-\sqrt{q^2-\left(\frac{\xi-2q}{\xi}\right)(\tilde{\gamma}_n)^2},\\
 \gamma_n^*\cot\frac{\gamma_n^*}{2}=&-q+\sqrt{q^2-\left(\frac{\xi-2q}{\xi}\right)(\gamma_n^*)^2}.
\end{align*}Then we have
\[
\tilde{\gamma}_n\cot\frac{\tilde{\gamma}_n}{2}<\gamma_n^*\cot\frac{\gamma_n^*}{2}.
\]It follows that  that $\gamma_n^*<\tilde{\gamma}_n$ since the map $F:\beta\rightarrow\beta\cot\frac{\beta}{2}$ is decreasing in $ (2(n-1)\pi,\,2n\pi)$.

Moreover,   we have
\[
 \tilde{\beta}_n\cot\frac{\tilde{\beta}_n}{2}=-2q<-q-\sqrt{q^2-\left(\frac{\xi-2q}{\xi}\right)(\tilde{\gamma}_n)^2}=\tilde{\gamma}_n\cot\frac{\tilde{\gamma}_n}{2}.
\]It follows that  that $\tilde{\gamma}_n<\tilde{\beta}_n$ since the map $f:\beta\rightarrow\beta\cot\frac{\beta}{2}$ is decreasing in $ (2(n-1)\pi,\,2n\pi)$. Therefore, we have $\beta_n^*>\tilde{\beta}_n>\tilde{\gamma}_n>\gamma_n^*$.

The uniqueness of the zero of $F^{-1}-H^{-1}$  follows from (\ref{inv-fgh-1}) and (\ref{inv-fgh-3}) that \[\frac{\d}{\d y}\left(F^{-1}(y)-H^{-1}(y)\right)<0,\] and $F^{-1}-H^{-1}$ is decreasing.

We have shown that $\gamma_n^*$ and $\tilde{\gamma}_n$ are the only zeros of $h_2$ and are in $(2(n-1)\pi,\,\tilde{\beta}_n)$. Then by its continuity and the limits obtained at Lemma~\ref{h_2-parameterization}, we 
have $h_2>0$ if and only if $\beta\in (\gamma_n^*,\,\tilde{\gamma}_n)\cup (\beta_n^*,\,2n\pi)$. This completes the proof of 
iii) b).\qed

\end{proof}
 By Theorems~\ref{delta-parameterization} and \ref{delta-h2-parameterization}, we can immediately obtain the following parameterization of all positive $(\delta,\,h_2)$ under the assumption $\xi>2p$, which is similar to Theorem~\ref{delta-h2-parameterization}.
 \begin{theorem}\label{para-delta-h2}
 Let $\xi$ and $q$ be positive numbers with $\xi>2q$, and $n_0\in\mathbb{N}$ be such that
 \[
  2(n_0-1)\pi<q\sqrt{\frac{\xi}{\xi-2q}}\leq 2n_0\pi.
 \] Let 
$\beta_n^*$, $n\in\mathbb{N}$  be the unique zero of the equation $\xi(1-\cos\beta)+\beta\sin\beta=0$  in $(2(n-1)\pi,\,2n\pi)$,  and $\bar{\beta}_n$ be the unique solution of $\beta\cot\beta=-q$ in $(2(n-1)\pi,\,2n\pi)$. Let the map $(\delta,\,h_2): (2(n-1)\pi,\,2n\pi)\rightarrow\mathbb{R}^2$ be defined at (\ref{delta-def}) and (\ref{delta-function}).  The following are true:
\begin{enumerate}
 \item[$i)$] For every $n>n_0$, $n\in\mathbb{N}$, 
 $
  (\delta,\,h_2)(\beta)>0
 $ if and only if $\beta\in  (\beta_n^*,\,2n\pi)$.
 \item[$ii)$] For every $1\leq n\leq n_0$, $n\in\mathbb{N}$, with  $\bar{\beta}_n>q\sqrt{\frac{\xi}{\xi-2q}}$, then
 $
  (\delta,\,h_2)(\beta)>0
 $ if and only if $\beta\in  (\beta_n^*,\,2n\pi)$.
  \item[$iii)$] For every $1\leq n\leq n_0$, $n\in\mathbb{N}$, with  $\bar{\beta}_n\leq q\sqrt{\frac{\xi}{\xi-2q}}$, $
  (\delta,\,h_2)(\beta)>0
 $ if and only if $\beta\in  (\gamma_n^*,\,\tilde{\gamma}_n)\cup(\beta_n^*,\,2n\pi)$, where $\gamma_n^*,\,\tilde{\gamma}_n$ are zeros of $h_2$ in $(2(n-1)\pi,\,2n\pi)$.
\end{enumerate}
  
 \end{theorem}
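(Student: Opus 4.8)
The plan is to observe that $(\delta,\,h_2)(\beta)$ is componentwise positive exactly on the intersection of the set where $\delta(\beta)>0$ and the set where $h_2(\beta)>0$, both of which have already been parameterized. Thus the entire argument reduces to intersecting two collections of intervals and matching up the case hypotheses, with no new analysis required. First I would record the $\delta$-side: since $\xi>2q$, Theorem~\ref{delta-parameterization}~ii) gives that $\delta(\beta)>0$ if and only if $\beta\in(2(n-1)\pi,\,\tilde{\beta}_n)\cup(\beta_n^*,\,2n\pi)$, and Lemma~\ref{lemma-3-3-2}~ii) furnishes the ordering $(2n-1)\pi<\tilde{\beta}_n<\beta_n^*<2n\pi$, so the two subintervals are disjoint and well separated inside $I_n$.

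Next I would align the case hypotheses with those of Theorem~\ref{h_2-parameterization-3}. For $n>n_0$ every $\beta\in I_n$ satisfies $\beta>2(n-1)\pi\geq q\sqrt{\frac{\xi}{\xi-2q}}$, which is case~i) there and yields $h_2>0$ iff $\beta\in(\beta_n^*,\,2n\pi)$. For $1\leq n\leq n_0$, I would first observe that $\bar{\beta}_n\in((2n-1)\pi,\,2n\pi)$ always holds, because the decreasing map $F(\beta)=\beta\cot\frac{\beta}{2}$ vanishes at $\beta=(2n-1)\pi$ and equals $-q<0$ at $\bar{\beta}_n$. Consequently the hypothesis $\bar{\beta}_n>q\sqrt{\frac{\xi}{\xi-2q}}$ of part~ii) simultaneously encompasses case~ii) of Theorem~\ref{h_2-parameterization-3} (where $q\sqrt{\frac{\xi}{\xi-2q}}\leq(2n-1)\pi<\bar{\beta}_n$) and case~iii)a), in each of which $h_2>0$ iff $\beta\in(\beta_n^*,\,2n\pi)$; the complementary hypothesis $\bar{\beta}_n\leq q\sqrt{\frac{\xi}{\xi-2q}}$ of part~iii) is precisely case~iii)b), where $h_2>0$ iff $\beta\in(\gamma_n^*,\,\tilde{\gamma}_n)\cup(\beta_n^*,\,2n\pi)$ with $\gamma_n^*,\,\tilde{\gamma}_n\in(2(n-1)\pi,\,\tilde{\beta}_n)$.

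Finally I would take the intersections. In parts~i) and~ii) the $h_2$-positivity interval $(\beta_n^*,\,2n\pi)$ is disjoint from $(2(n-1)\pi,\,\tilde{\beta}_n)$ and coincides with the right component of the $\delta$-positivity set, so the intersection collapses to $(\beta_n^*,\,2n\pi)$. In part~iii), the ordering $\gamma_n^*<\tilde{\gamma}_n<\tilde{\beta}_n<\beta_n^*$ places $(\gamma_n^*,\,\tilde{\gamma}_n)$ inside $(2(n-1)\pi,\,\tilde{\beta}_n)$, while $(\beta_n^*,\,2n\pi)$ again matches the right $\delta$-component, so the intersection is $(\gamma_n^*,\,\tilde{\gamma}_n)\cup(\beta_n^*,\,2n\pi)$. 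Since every step is a set intersection of previously established intervals, there is no genuine obstacle; the only point demanding care is confirming that the dichotomy $\bar{\beta}_n\gtrless q\sqrt{\frac{\xi}{\xi-2q}}$ correctly partitions cases~ii)/iii)a) against case~iii)b) of Theorem~\ref{h_2-parameterization-3}, which is exactly the location of $\bar{\beta}_n$ established above.
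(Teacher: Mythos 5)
Your proposal is correct and follows exactly the route the paper intends: the paper states Theorem~\ref{para-delta-h2} as an immediate consequence of the $\delta$-parameterization (Theorem~\ref{delta-parameterization}~ii)) and the $h_2$-parameterization (Theorem~\ref{h_2-parameterization-3}), and your argument simply carries out the interval intersections, including the correct observation that $\bar{\beta}_n>(2n-1)\pi$ makes the dichotomy $\bar{\beta}_n\gtrless q\sqrt{\xi/(\xi-2q)}$ absorb cases~ii) and iii)a) of Theorem~\ref{h_2-parameterization-3} into part~ii) of the present statement. No gaps.
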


\section{Numerical simulations}\label{Numerical-simulations}
In this section, we numerically demonstrate the results of Theorem~\ref{para-delta-1},   Theorem~\ref{delta-h2-parameterization} and Theorem~\ref{para-delta-h2}.

Figure~\ref{fig-01} shows that case of  item (i) of Theorem~\ref{para-delta-1}  with $q>\beta_1\frac{\sqrt{2\sqrt{5}-2}}{4\sqrt{5}-8}$ and Figure~\ref{fig-02} the details of the stability region near the origin $(0,\,0)$.  Note that if $(\delta,\,h_1)=(0,\,0)$, the characteristic equation $\mathcal{P}(\lambda)+\delta(h_1+\frac{q}{\lambda e^{\lambda}})(1-e^{-\lambda})=0$ has only two roots with real parts negative. That is, the equilibrium of system~(\ref{SDDEs-system-eta}) is stable. It is known that the equilibrium changes stability as the parameter $(\delta,\,h_1)$ varies  only if it passes the boundary where  purely imaginary roots of  the characteristic equation occur. Figure~\ref{fig-02} shows that with delayed instead of instantaneous spindle speed control, it is still possible to stabilize the equilibrium, as a neighborhood without intersecting either branches of the parameterized curves of $(\delta,\,h_1)$ near the origin exists.

Figure~\ref{fig-4}  shows the case of item (iv) of Theorem~\ref{para-delta-1} for  $\mathcal{P}(\lambda)+\delta(h_2+\frac{q}{\lambda})(1-e^{-\lambda})=0$ with $q<\beta_1\frac{\sqrt{2\sqrt{5}-2}}{4\sqrt{5}-8}$.  Figure~\ref{fig-5} shows the stability region with $\xi<2q$ described at Theorem~\ref{delta-h2-parameterization}. Since the branch parameterized with $\beta\in (\gamma_n^*,\,\beta_n^*)$ has a vertical asymptote and has a zero on the horizontal line $h_2=0$, the connected   stability region for positive $(\delta,\,h_2)$ is enclosed by the first branch with $n=1$, $\delta=0$ and  $h_2=0$ and the other branches with $n\geq 2$ will not contribute an stability region.

Figure~\ref{fig-6} shows the stability region with $\xi>2q$ described at Theorem~\ref{para-delta-h2}. With $\xi=1.62$ and $q=0.8$, we have
\[
2\pi\leq q\sqrt{\frac{\xi}{\xi-2q}}\leq 4\pi,
\]which leads to $n_0=2$. According to  Theorem~\ref{para-delta-h2}, for $n\geq 3$,   $(\delta,\,h_2)$ is positive if and only if $\beta\in (\beta_n^*,\,2n\pi)$. For $n=n_0=2$, we have $\bar{\beta}_n=9.591212>q\sqrt{\frac{\xi}{\xi-2q}}=7.2>2\pi$. Then  $(\delta,\,h_2)$ is positive if and only if $\beta\in (\beta_n^*,\,2n\pi)$ with $\beta_n^*=9.75394647$.  For $n=1<n_0$, we have $\bar{\beta}_n=3.581158<q\sqrt{\frac{\xi}{\xi-2q}}=7.2$. Then  $(\delta,\,h_2)$ is positive if and only if $\beta\in  (\gamma_n^*,\,\tilde{\gamma}_n)\cup (\beta_n^*,\,2n\pi)$, where $\beta_n^*=3.92455245$, $\gamma_n^*=3.19356076$, and $\tilde{\gamma}_n=3.86974862$. Note that the extra interval $  (\gamma_n^*,\,\tilde{\gamma}_n)$ in addition to $ (\beta_n^*,\,2n\pi)$ corresponds to the small lobe near $(0,\,0)$ in  Figure~\ref{fig-6}.

It turns out that in this case with $\xi>2q$, the stability region is the connected region between  each of the stability lobes corresponding to $n\in\mathbb{N}$. This means that when the damping coefficient is large the stability region is unbounded in both of the directions of $\delta$ and $h_2$, in contrast to the scenarios shown in Figures~\ref{fig-01}--\ref{fig-5} which are unbounded in  the directions of $h_1$ or $h_2$ only.

\providelength{\AxesLineWidth}       \setlength{\AxesLineWidth}{0.5pt}%
\providelength{\plotwidth}           \setlength{\plotwidth}{10cm}
\providelength{\LineWidth}           \setlength{\LineWidth}{0.7pt}%
\providelength{\MarkerSize}          \setlength{\MarkerSize}{4pt}%
\newrgbcolor{GridColor}{0.8 0.8 0.8}%

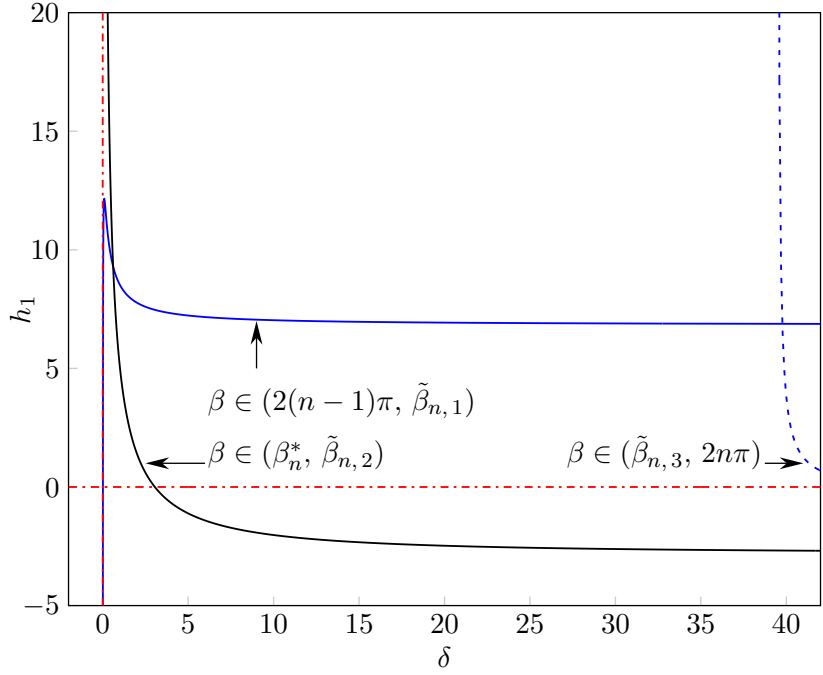
\begin{figure}[H]
\begin{center}
\psset{xunit=0.022727\plotwidth,yunit=0.031548\plotwidth}%
\begin{pspicture}(-6.460829,-7.777778)(42.202765,20.584795)%


\psline[linewidth=\AxesLineWidth,linecolor=GridColor](0.000000,-5.000000)(0.000000,-4.619632)
\psline[linewidth=\AxesLineWidth,linecolor=GridColor](5.000000,-5.000000)(5.000000,-4.619632)
\psline[linewidth=\AxesLineWidth,linecolor=GridColor](10.000000,-5.000000)(10.000000,-4.619632)
\psline[linewidth=\AxesLineWidth,linecolor=GridColor](15.000000,-5.000000)(15.000000,-4.619632)
\psline[linewidth=\AxesLineWidth,linecolor=GridColor](20.000000,-5.000000)(20.000000,-4.619632)
\psline[linewidth=\AxesLineWidth,linecolor=GridColor](25.000000,-5.000000)(25.000000,-4.619632)
\psline[linewidth=\AxesLineWidth,linecolor=GridColor](30.000000,-5.000000)(30.000000,-4.619632)
\psline[linewidth=\AxesLineWidth,linecolor=GridColor](35.000000,-5.000000)(35.000000,-4.619632)
\psline[linewidth=\AxesLineWidth,linecolor=GridColor](40.000000,-5.000000)(40.000000,-4.619632)
\psline[linewidth=\AxesLineWidth,linecolor=GridColor](-2.000000,-5.000000)(-1.472000,-5.000000)
\psline[linewidth=\AxesLineWidth,linecolor=GridColor](-2.000000,0.000000)(-1.472000,0.000000)
\psline[linewidth=\AxesLineWidth,linecolor=GridColor](-2.000000,5.000000)(-1.472000,5.000000)
\psline[linewidth=\AxesLineWidth,linecolor=GridColor](-2.000000,10.000000)(-1.472000,10.000000)
\psline[linewidth=\AxesLineWidth,linecolor=GridColor](-2.000000,15.000000)(-1.472000,15.000000)
\psline[linewidth=\AxesLineWidth,linecolor=GridColor](-2.000000,20.000000)(-1.472000,20.000000)

{ \footnotesize 
\rput[t](0.000000,-5.380368){$0$}
\rput[t](5.000000,-5.380368){$5$}
\rput[t](10.000000,-5.380368){$10$}
\rput[t](15.000000,-5.380368){$15$}
\rput[t](20.000000,-5.380368){$20$}
\rput[t](25.000000,-5.380368){$25$}
\rput[t](30.000000,-5.380368){$30$}
\rput[t](35.000000,-5.380368){$35$}
\rput[t](40.000000,-5.380368){$40$}
\rput[r](-2.528000,-5.000000){$-5$}
\rput[r](-2.528000,0.000000){$0$}
\rput[r](-2.528000,5.000000){$5$}
\rput[r](-2.528000,10.000000){$10$}
\rput[r](-2.528000,15.000000){$15$}
\rput[r](-2.528000,20.000000){$20$}
} 

\psframe[linewidth=\AxesLineWidth,dimen=middle](-2.000000,-5.000000)(42.000000,20.000000)

{ \small 
\rput[b](20.000000,-8.277778){
\begin{tabular}{c}
$\delta$\\
\end{tabular}
}

\rput[t]{90}(-6.460829,7.500000){
\begin{tabular}{c}
$h_1$\\
\end{tabular}
}
} 

\newrgbcolor{color162.0448}{0  0  1}
\psline[plotstyle=line,linejoin=1,linestyle=solid,linewidth=\LineWidth,linecolor=color162.0448]
(0.009026,-5.000000)(0.009085,-4.858943)
\psline[plotstyle=line,linejoin=1,linestyle=solid,linewidth=\LineWidth,linecolor=color162.0448]
(32.747417,6.889832)(42.000000,6.876533)
\psline[plotstyle=line,linejoin=1,linestyle=solid,linewidth=\LineWidth,linecolor=color162.0448]
(0.009085,-4.858943)(0.011051,-1.035312)(0.013130,1.735165)(0.015423,3.895502)(0.017808,5.526941)
(0.020406,6.844819)(0.023076,7.866122)(0.025961,8.710353)(0.028893,9.374698)(0.032041,9.931788)
(0.035213,10.373383)(0.038600,10.746437)(0.041983,11.042249)(0.045334,11.277041)(0.048881,11.476454)
(0.052363,11.633225)(0.055747,11.755973)(0.059000,11.851544)(0.062405,11.932161)(0.065640,11.993397)
(0.068671,12.039329)(0.071819,12.077077)(0.074721,12.104127)(0.077344,12.123016)(0.079657,12.135803)
(0.082030,12.145564)(0.084057,12.151461)(0.085710,12.154756)(0.087393,12.156817)(0.088675,12.157569)
(0.089975,12.157653)(0.091292,12.157081)(0.093076,12.155313)(0.094892,12.152421)(0.097210,12.147259)
(0.100062,12.138856)(0.103490,12.126098)(0.107548,12.107757)(0.112299,12.082512)(0.118390,12.045317)
(0.125417,11.997172)(0.134810,11.926495)(0.147070,11.827251)(0.171136,11.622480)(0.198773,11.387406)
(0.217894,11.230493)(0.235571,11.091253)(0.252365,10.964633)(0.269248,10.843047)(0.286039,10.727759)
(0.302528,10.619865)(0.318480,10.520314)(0.333637,10.429924)(0.349808,10.337784)(0.364871,10.255753)
(0.380866,10.172431)(0.397882,10.087839)(0.413355,10.014337)(0.429719,9.939930)(0.447054,9.864630)
(0.462305,9.801206)(0.478346,9.737177)(0.495240,9.672550)(0.513057,9.607330)(0.528026,9.554732)
(0.543678,9.501761)(0.560059,9.448422)(0.577223,9.394716)(0.595225,9.340647)(0.614128,9.286217)
(0.628937,9.245160)(0.644324,9.203903)(0.660322,9.162447)(0.676968,9.120793)(0.694302,9.078942)
(0.712368,9.036896)(0.731212,8.994655)(0.750886,8.952221)(0.771445,8.909593)(0.792950,8.866775)
(0.815469,8.823766)(0.839072,8.780567)(0.855451,8.751663)(0.872373,8.722675)(0.889866,8.693605)
(0.907959,8.664451)(0.926685,8.635214)(0.946076,8.605895)(0.966168,8.576494)(0.987000,8.547011)
(1.008614,8.517446)(1.031054,8.487800)(1.054369,8.458073)(1.078610,8.428265)(1.103834,8.398376)
(1.130102,8.368406)(1.157479,8.338357)(1.171606,8.323302)(1.186038,8.308228)(1.200784,8.293133)
(1.215856,8.278018)(1.231264,8.262884)(1.247019,8.247730)(1.263133,8.232556)(1.279618,8.217362)
(1.296489,8.202149)(1.313757,8.186915)(1.331438,8.171662)(1.349546,8.156390)(1.368097,8.141098)
(1.387108,8.125786)(1.406596,8.110454)(1.426578,8.095103)(1.447074,8.079733)(1.468105,8.064343)
(1.489690,8.048934)(1.511853,8.033505)(1.534617,8.018056)(1.558006,8.002589)(1.582047,7.987102)
(1.606768,7.971595)(1.632197,7.956069)(1.658365,7.940524)(1.685305,7.924960)(1.713053,7.909377)
(1.741643,7.893774)(1.771117,7.878152)(1.801514,7.862511)(1.832879,7.846851)(1.865260,7.831171)
(1.898706,7.815473)(1.933270,7.799755)(1.969010,7.784019)(2.005987,7.768263)(2.044266,7.752489)
(2.083917,7.736695)(2.125014,7.720883)(2.167639,7.705051)(2.211879,7.689201)(2.257827,7.673332)
(2.305583,7.657444)(2.355256,7.641537)(2.406965,7.625612)(2.460837,7.609667)(2.517011,7.593704)
(2.575637,7.577722)(2.636879,7.561722)(2.700917,7.545702)(2.767947,7.529664)(2.838182,7.513608)
(2.911859,7.497532)(2.989236,7.481438)(3.070601,7.465326)(3.156269,7.449195)(3.246590,7.433045)
(3.341954,7.416877)(3.442797,7.400691)(3.549603,7.384486)(3.662917,7.368262)(3.783355,7.352020)
(3.911608,7.335760)(4.048462,7.319481)(4.194814,7.303184)(4.351688,7.286868)(4.520259,7.270534)
(4.701888,7.254182)(4.898152,7.237811)(5.110894,7.221422)(5.342281,7.205015)(5.594875,7.188590)
(5.871733,7.172146)(6.176527,7.155684)(6.513709,7.139204)(6.888732,7.122706)(7.308344,7.106190)
(7.780999,7.089655)(8.317434,7.073103)(8.931494,7.056532)(9.641340,7.039943)(10.471280,7.023336)
(11.454607,7.006711)(12.638174,6.990068)(14.090078,6.973407)(15.913241,6.956728)(18.270936,6.940031)
(21.438622,6.923316)(25.920282,6.906583)(32.747417,6.889832)

\newrgbcolor{color163.0443}{0  0  0}
\psline[plotstyle=line,linejoin=1,linestyle=solid,linewidth=\LineWidth,linecolor=color163.0443]
(0.291376,20.000000)(0.292102,19.950816)
\psline[plotstyle=line,linejoin=1,linestyle=solid,linewidth=\LineWidth,linecolor=color163.0443]
(41.695061,-2.687391)(42.000000,-2.688650)
\psline[plotstyle=line,linejoin=1,linestyle=solid,linewidth=\LineWidth,linecolor=color163.0443]
(0.292102,19.950816)(0.302649,19.264294)(0.313385,18.610662)(0.324314,17.987523)(0.335440,17.392706)
(0.346767,16.824247)(0.358300,16.280359)(0.370043,15.759417)(0.382002,15.259937)(0.394181,14.780562)
(0.406584,14.320049)(0.419218,13.877257)(0.432087,13.451137)(0.445196,13.040722)(0.458551,12.645121)
(0.472158,12.263510)(0.486022,11.895126)(0.500149,11.539262)(0.514546,11.195261)(0.529218,10.862515)
(0.544172,10.540455)(0.559415,10.228552)(0.574953,9.926313)(0.590793,9.633277)(0.606942,9.349013)
(0.623408,9.073117)(0.640198,8.805211)(0.657321,8.544939)(0.671263,8.341995)(0.685426,8.143561)
(0.699816,7.949485)(0.714436,7.759620)(0.729291,7.573827)(0.744385,7.391972)(0.759723,7.213927)
(0.775310,7.039573)(0.791150,6.868792)(0.807250,6.701474)(0.823613,6.537512)(0.840245,6.376806)
(0.857151,6.219257)(0.874338,6.064773)(0.891809,5.913265)(0.909572,5.764646)(0.927633,5.618835)
(0.945996,5.475754)(0.964669,5.335326)(0.983657,5.197479)(1.002968,5.062143)(1.022607,4.929251)
(1.042582,4.798738)(1.062900,4.670543)(1.083567,4.544606)(1.104592,4.420870)(1.125981,4.299278)
(1.147742,4.179778)(1.169883,4.062319)(1.192412,3.946850)(1.215339,3.833324)(1.238670,3.721696)
(1.262414,3.611919)(1.280500,3.530777)(1.298827,3.450635)(1.317400,3.371476)(1.336222,3.293284)
(1.355298,3.216042)(1.374632,3.139734)(1.394228,3.064345)(1.414091,2.989860)(1.434224,2.916264)
(1.454633,2.843543)(1.475322,2.771683)(1.496295,2.700671)(1.517559,2.630492)(1.539116,2.561134)
(1.560973,2.492584)(1.583135,2.424830)(1.605606,2.357860)(1.628393,2.291662)(1.651499,2.226224)
(1.674932,2.161535)(1.698697,2.097584)(1.722799,2.034361)(1.747244,1.971854)(1.772039,1.910054)
(1.797189,1.848951)(1.822701,1.788534)(1.848581,1.728794)(1.874836,1.669721)(1.901473,1.611307)
(1.928497,1.553543)(1.955917,1.496419)(1.983740,1.439926)(2.011972,1.384058)(2.040621,1.328804)
(2.069695,1.274158)(2.099202,1.220111)(2.129150,1.166656)(2.159546,1.113784)(2.190399,1.061489)
(2.221718,1.009763)(2.253511,0.958599)(2.285787,0.907990)(2.318556,0.857929)(2.351826,0.808410)
(2.385608,0.759426)(2.419911,0.710970)(2.454744,0.663036)(2.490119,0.615618)(2.526046,0.568710)
(2.562536,0.522305)(2.599599,0.476399)(2.637246,0.430985)(2.675490,0.386057)(2.714343,0.341611)
(2.753815,0.297640)(2.793920,0.254139)(2.834670,0.211104)(2.876079,0.168529)(2.918159,0.126408)
(2.960925,0.084738)(3.004390,0.043512)(3.048569,0.002727)(3.093476,-0.037622)(3.139127,-0.077540)
(3.185537,-0.117032)(3.232722,-0.156102)(3.280698,-0.194754)(3.329482,-0.232992)(3.379092,-0.270822)
(3.429544,-0.308246)(3.480858,-0.345269)(3.533051,-0.381895)(3.586142,-0.418128)(3.640152,-0.453972)
(3.695100,-0.489430)(3.751008,-0.524507)(3.807895,-0.559206)(3.865785,-0.593530)(3.924699,-0.627484)
(3.984661,-0.661070)(4.045695,-0.694293)(4.107824,-0.727155)(4.171073,-0.759660)(4.235470,-0.791811)
(4.301040,-0.823612)(4.367810,-0.855065)(4.435808,-0.886174)(4.505064,-0.916942)(4.575608,-0.947371)
(4.647469,-0.977466)(4.720680,-1.007228)(4.795273,-1.036662)(4.871282,-1.065768)(4.948740,-1.094552)
(5.027685,-1.123014)(5.108151,-1.151158)(5.190178,-1.178987)(5.273805,-1.206503)(5.359070,-1.233710)
(5.446017,-1.260608)(5.534688,-1.287202)(5.625127,-1.313493)(5.717379,-1.339485)(5.811493,-1.365178)
(5.907516,-1.390577)(6.005499,-1.415683)(6.105494,-1.440498)(6.207555,-1.465026)(6.311737,-1.489267)
(6.418098,-1.513225)(6.526697,-1.536902)(6.637596,-1.560300)(6.750857,-1.583420)(6.866549,-1.606266)
(6.984737,-1.628839)(7.105494,-1.651141)(7.228891,-1.673175)(7.355006,-1.694942)(7.483917,-1.716444)
(7.615704,-1.737684)(7.750454,-1.758663)(7.888253,-1.779384)(8.029193,-1.799847)(8.173368,-1.820056)
(8.320876,-1.840011)(8.471819,-1.859715)(8.626304,-1.879170)(8.784440,-1.898377)(8.946342,-1.917338)
(9.112130,-1.936055)(9.281926,-1.954529)(9.455862,-1.972763)(9.634070,-1.990758)(9.816691,-2.008515)
(10.003871,-2.026037)(10.195763,-2.043324)(10.392525,-2.060379)(10.594323,-2.077203)(10.801329,-2.093798)
(11.013726,-2.110165)(11.231700,-2.126306)(11.455451,-2.142222)(11.685184,-2.157915)(11.921114,-2.173386)
(12.163470,-2.188637)(12.412486,-2.203670)(12.668411,-2.218485)(12.931506,-2.233084)(13.202043,-2.247469)
(13.480309,-2.261641)(13.766604,-2.275601)(14.061244,-2.289350)(14.364563,-2.302891)(14.676908,-2.316225)
(14.998649,-2.329352)(15.330173,-2.342274)(15.671888,-2.354993)(16.024225,-2.367509)(16.387638,-2.379824)
(16.762609,-2.391940)(17.149645,-2.403857)(17.549281,-2.415577)(17.962088,-2.427101)(18.388666,-2.438430)
(18.829655,-2.449565)(19.285730,-2.460508)(19.757612,-2.471260)(20.246066,-2.481822)(20.751904,-2.492196)
(21.275992,-2.502381)(21.819255,-2.512380)(22.382677,-2.522194)(22.967311,-2.531823)(23.574282,-2.541270)
(24.204795,-2.550534)(24.860141,-2.559618)(25.541706,-2.568522)(26.250977,-2.577247)(26.989556,-2.585794)
(27.759167,-2.594165)(28.561671,-2.602360)(29.399075,-2.610381)(30.273554,-2.618229)(31.187462,-2.625904)
(32.143355,-2.633408)(33.144009,-2.640741)(34.192451,-2.647906)(35.291978,-2.654902)(36.446198,-2.661730)
(37.659060,-2.668392)(38.934898,-2.674889)(40.278480,-2.681222)(41.695061,-2.687391)

\newrgbcolor{color164.0443}{0  0  1}
\psline[plotstyle=line,linejoin=1,linestyle=dashed,dash=2pt 3pt,linewidth=\LineWidth,linecolor=color164.0443]
(42.000000,0.688969)(41.926082,0.715313)
\psline[plotstyle=line,linejoin=1,linestyle=dashed,dash=2pt 3pt,linewidth=\LineWidth,linecolor=color164.0443]
(39.593822,17.164009)(39.590241,20.000000)
\psline[plotstyle=line,linejoin=1,linestyle=dashed,dash=2pt 3pt,linewidth=\LineWidth,linecolor=color164.0443]
(41.926082,0.715313)(41.817182,0.757370)(41.709005,0.802816)(41.601546,0.852131)(41.494798,0.905888)
(41.388754,0.964781)(41.283409,1.029654)(41.178757,1.101550)(41.074791,1.181767)(40.971505,1.271943)
(40.868895,1.374179)(40.766953,1.491209)(40.665675,1.626661)(40.565055,1.785458)(40.465086,1.974443)
(40.365764,2.203428)(40.267083,2.486991)(40.169038,2.847771)(40.071623,3.322942)(39.974833,3.978096)
(39.878663,4.940820)(39.783108,6.496609)(39.688163,9.442597)(39.593822,17.164009)

\newrgbcolor{color165.0443}{1  0  0}
\psline[plotstyle=line,linejoin=1,linestyle=dashed,dash=3pt 2pt 1pt 2pt,linewidth=\LineWidth,linecolor=color165.0443]
(-2.000000,0.000000)(5.000000,0.000000)
\psline[plotstyle=line,linejoin=1,linestyle=dashed,dash=3pt 2pt 1pt 2pt,linewidth=\LineWidth,linecolor=color165.0443]
(35.000000,0.000000)(42.000000,0.000000)
\psline[plotstyle=line,linejoin=1,linestyle=dashed,dash=3pt 2pt 1pt 2pt,linewidth=\LineWidth,linecolor=color165.0443]
(5.000000,0.000000)(15.000000,0.000000)(25.000000,0.000000)(35.000000,0.000000)

\newrgbcolor{color166.0443}{1  0  0}
\psline[plotstyle=line,linejoin=1,linestyle=dashed,dash=3pt 2pt 1pt 2pt,linewidth=\LineWidth,linecolor=color166.0443]
(0.000000,-5.000000)(0.000000,20.000000)

{ \small 
\newrgbcolor{color234.0145}{0  0  0}
\psline[linestyle=solid,linewidth=0.5pt,linecolor=color234.0145,arrowsize=1.5pt 3,arrowlength=2,arrowinset=0.3]{->}(9,5)(9,7)
\uput{0pt}[33.769825](6, 3){%
\psframebox*[framesep=1pt]{\begin{tabular}{@{}c@{}}
$\beta\in (2(n-1)\pi,\,\tilde{\beta}_{n,\, 1})$\\[-0.3ex]
\end{tabular}}}
}

{ \small 
\newrgbcolor{color234.0145}{0  0  0}
\psline[linestyle=solid,linewidth=0.5pt,linecolor=color234.0145,arrowsize=1.5pt 3,arrowlength=2,arrowinset=0.3]{->}(6,1)(2.5,1)
\uput{0pt}[33.769825](6,0.8){%
\psframebox*[framesep=1pt]{\begin{tabular}{@{}c@{}}
$\beta\in (\beta_n^*,\,\tilde{\beta}_{n,\,2})$\\[-0.3ex]
\end{tabular}}}
}

{ \small 
\newrgbcolor{color234.0145}{0  0  0}
\psline[linestyle=solid,linewidth=0.5pt,linecolor=color234.0145,arrowsize=1.5pt 3,arrowlength=2,arrowinset=0.3]{->}(35,1)(41,1)
\uput{0pt}[33.769825](27,0.8){%
\psframebox*[framesep=1pt]{\begin{tabular}{@{}c@{}}
$\beta\in (\tilde{\beta}_{n,\,3},\,2n\pi)$\\[-0.3ex]
\end{tabular}}}
}
\end{pspicture}%
\caption{The curves of  $(\delta,\,h_1)$, $\delta>0$ where  $\xi=0.2$, $q=12$, $n=1$, $\beta_1\frac{\sqrt{2\sqrt{5}-2}}{4\sqrt{5}-8}=8.955929<q$,   $\beta_n^*=3.26398905$, $\tilde{\beta}_{n,\,1}=1.634732310091$, $\tilde{\beta}_{n,\,2}=4.99223679$ and $\tilde{\beta}_{n,\,3}=5.73783731$.   The shaded region near the origin $(0,\,0)$ is the stability region.}
\label{fig-01}
\end{center}
\end{figure}

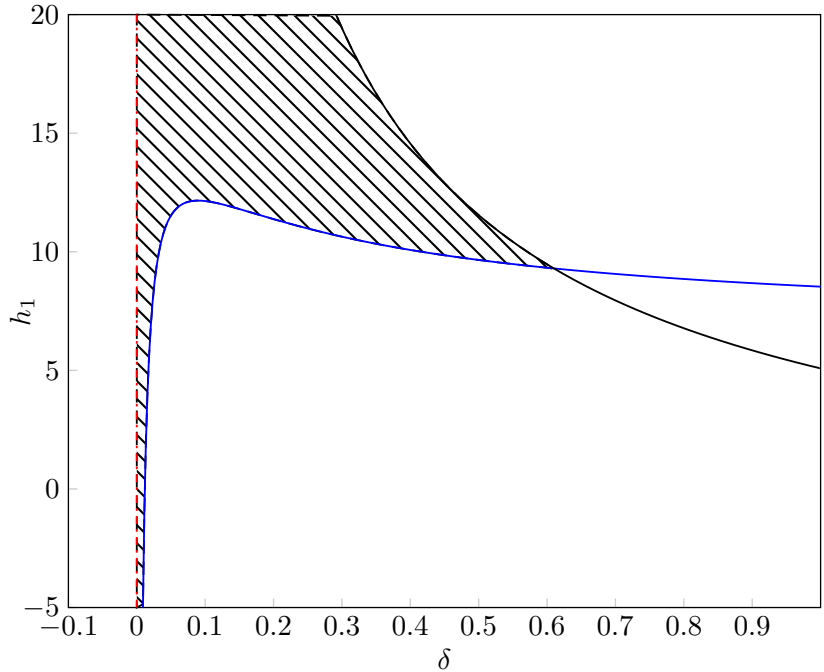
\begin{figure}[H]
\begin{center}
\psset{xunit=0.909091\plotwidth,yunit=0.031548\plotwidth}%
\begin{pspicture}(-0.211521,-7.777778)(1.005069,20.584795)%


\psline[linewidth=\AxesLineWidth,linecolor=GridColor](-0.100000,-5.000000)(-0.100000,-4.619632)
\psline[linewidth=\AxesLineWidth,linecolor=GridColor](0.000000,-5.000000)(0.000000,-4.619632)
\psline[linewidth=\AxesLineWidth,linecolor=GridColor](0.100000,-5.000000)(0.100000,-4.619632)
\psline[linewidth=\AxesLineWidth,linecolor=GridColor](0.200000,-5.000000)(0.200000,-4.619632)
\psline[linewidth=\AxesLineWidth,linecolor=GridColor](0.300000,-5.000000)(0.300000,-4.619632)
\psline[linewidth=\AxesLineWidth,linecolor=GridColor](0.400000,-5.000000)(0.400000,-4.619632)
\psline[linewidth=\AxesLineWidth,linecolor=GridColor](0.500000,-5.000000)(0.500000,-4.619632)
\psline[linewidth=\AxesLineWidth,linecolor=GridColor](0.600000,-5.000000)(0.600000,-4.619632)
\psline[linewidth=\AxesLineWidth,linecolor=GridColor](0.700000,-5.000000)(0.700000,-4.619632)
\psline[linewidth=\AxesLineWidth,linecolor=GridColor](0.800000,-5.000000)(0.800000,-4.619632)
\psline[linewidth=\AxesLineWidth,linecolor=GridColor](0.900000,-5.000000)(0.900000,-4.619632)
\psline[linewidth=\AxesLineWidth,linecolor=GridColor](-0.100000,-5.000000)(-0.086800,-5.000000)
\psline[linewidth=\AxesLineWidth,linecolor=GridColor](-0.100000,0.000000)(-0.086800,0.000000)
\psline[linewidth=\AxesLineWidth,linecolor=GridColor](-0.100000,5.000000)(-0.086800,5.000000)
\psline[linewidth=\AxesLineWidth,linecolor=GridColor](-0.100000,10.000000)(-0.086800,10.000000)
\psline[linewidth=\AxesLineWidth,linecolor=GridColor](-0.100000,15.000000)(-0.086800,15.000000)
\psline[linewidth=\AxesLineWidth,linecolor=GridColor](-0.100000,20.000000)(-0.086800,20.000000)

{ \footnotesize 
\rput[t](-0.100000,-5.380368){$-0.1$}
\rput[t](0.000000,-5.380368){$0$}
\rput[t](0.100000,-5.380368){$0.1$}
\rput[t](0.200000,-5.380368){$0.2$}
\rput[t](0.300000,-5.380368){$0.3$}
\rput[t](0.400000,-5.380368){$0.4$}
\rput[t](0.500000,-5.380368){$0.5$}
\rput[t](0.600000,-5.380368){$0.6$}
\rput[t](0.700000,-5.380368){$0.7$}
\rput[t](0.800000,-5.380368){$0.8$}
\rput[t](0.900000,-5.380368){$0.9$}
\rput[r](-0.113200,-5.000000){$-5$}
\rput[r](-0.113200,0.000000){$0$}
\rput[r](-0.113200,5.000000){$5$}
\rput[r](-0.113200,10.000000){$10$}
\rput[r](-0.113200,15.000000){$15$}
\rput[r](-0.113200,20.000000){$20$}
} 

\psframe[linewidth=\AxesLineWidth,dimen=middle](-0.100000,-5.000000)(1.000000,20.000000)

{ \small 
\rput[b](0.450000,-8.277778){
\begin{tabular}{c}
$\delta$\\
\end{tabular}
}

\rput[t]{90}(-0.211521,7.500000){
\begin{tabular}{c}
$h_1$\\
\end{tabular}
}
} 

\pspolygon[fillstyle=vlines, fillcolor=red,linecolor=black,linestyle=dashed](0,-5)(0.009085,-4.858943)(0.011051,-1.035312)(0.013130,1.735165)(0.015423,3.895502)(0.017808,5.526941)
(0.020406,6.844819)(0.023076,7.866122)(0.025961,8.710353)(0.028893,9.374698)(0.032041,9.931788)
(0.035213,10.373383)(0.038600,10.746437)(0.041983,11.042249)(0.045334,11.277041)(0.048881,11.476454)
(0.052363,11.633225)(0.055747,11.755973)(0.059000,11.851544)(0.062405,11.932161)(0.065640,11.993397)
(0.068671,12.039329)(0.071819,12.077077)(0.074721,12.104127)(0.077344,12.123016)(0.079657,12.135803)
(0.082030,12.145564)(0.084057,12.151461)(0.085710,12.154756)(0.087393,12.156817)(0.088675,12.157569)
(0.089975,12.157653)(0.091292,12.157081)(0.093076,12.155313)(0.094892,12.152421)(0.097210,12.147259)
(0.100062,12.138856)(0.103490,12.126098)(0.107548,12.107757)(0.112299,12.082512)(0.118390,12.045317)
(0.125417,11.997172)(0.134810,11.926495)(0.147070,11.827251)(0.171136,11.622480)(0.198773,11.387406)
(0.217894,11.230493)(0.235571,11.091253)(0.252365,10.964633)(0.269248,10.843047)(0.286039,10.727759)
(0.302528,10.619865)(0.318480,10.520314)(0.333637,10.429924)(0.349808,10.337784)(0.364871,10.255753)
(0.380866,10.172431)(0.397882,10.087839)(0.413355,10.014337)(0.429719,9.939930)(0.447054,9.864630)
(0.462305,9.801206)(0.478346,9.737177)(0.495240,9.672550)(0.513057,9.607330)(0.528026,9.554732)
(0.543678,9.501761)(0.560059,9.448422)(0.577223,9.394716)(0.595225,9.340647)(0.61,9.286217)(0.606942,9.349013)(0.590793,9.633277)(0.574953,9.926313)(0.559415,10.228552)(0.544172,10.540455)(0.529218,10.862515)(0.514546,11.195261)(0.500149,11.539262)(0.486022,11.895126)(0.472158,12.263510)(0.458551,12.645121)(0.445196,13.040722)(0.432087,13.451137)
(0.419218,13.877257)(0.406584,14.320049)(0.394181,14.780562)(0.382002,15.259937)(0.370043,15.759417)(0.358300,16.280359)(0.346767,16.824247)(0.335440,17.392706)(0.324314,17.987523)
(0.313385,18.610662)(0.302649,19.264294)(0.292102,19.950816) (0, 20)

\newrgbcolor{color162.0455}{0  0  1}
\psline[plotstyle=line,linejoin=1,linestyle=solid,linewidth=\LineWidth,linecolor=color162.0455]
(0.009026,-5.000000)(0.009085,-4.858943)
\psline[plotstyle=line,linejoin=1,linestyle=solid,linewidth=\LineWidth,linecolor=color162.0455]
(0.997707,8.532239)(1.000000,8.529129)
\psline[plotstyle=line,linejoin=1,linestyle=solid,linewidth=\LineWidth,linecolor=color162.0455]
(0.009085,-4.858943)(0.011051,-1.035312)(0.013130,1.735165)(0.015423,3.895502)(0.017808,5.526941)
(0.020406,6.844819)(0.023076,7.866122)(0.025961,8.710353)(0.028893,9.374698)(0.032041,9.931788)
(0.035213,10.373383)(0.038600,10.746437)(0.041983,11.042249)(0.045334,11.277041)(0.048881,11.476454)
(0.052363,11.633225)(0.055747,11.755973)(0.059000,11.851544)(0.062405,11.932161)(0.065640,11.993397)
(0.068671,12.039329)(0.071819,12.077077)(0.074721,12.104127)(0.077344,12.123016)(0.079657,12.135803)
(0.082030,12.145564)(0.084057,12.151461)(0.085710,12.154756)(0.087393,12.156817)(0.088675,12.157569)
(0.089975,12.157653)(0.091292,12.157081)(0.093076,12.155313)(0.094892,12.152421)(0.097210,12.147259)
(0.100062,12.138856)(0.103490,12.126098)(0.107548,12.107757)(0.112299,12.082512)(0.118390,12.045317)
(0.125417,11.997172)(0.134810,11.926495)(0.147070,11.827251)(0.171136,11.622480)(0.198773,11.387406)
(0.217894,11.230493)(0.235571,11.091253)(0.252365,10.964633)(0.269248,10.843047)(0.286039,10.727759)
(0.302528,10.619865)(0.318480,10.520314)(0.333637,10.429924)(0.349808,10.337784)(0.364871,10.255753)
(0.380866,10.172431)(0.397882,10.087839)(0.413355,10.014337)(0.429719,9.939930)(0.447054,9.864630)
(0.462305,9.801206)(0.478346,9.737177)(0.495240,9.672550)(0.513057,9.607330)(0.528026,9.554732)
(0.543678,9.501761)(0.560059,9.448422)(0.577223,9.394716)(0.595225,9.340647)(0.614128,9.286217)
(0.628937,9.245160)(0.644324,9.203903)(0.660322,9.162447)(0.676968,9.120793)(0.694302,9.078942)
(0.712368,9.036896)(0.731212,8.994655)(0.750886,8.952221)(0.771445,8.909593)(0.792950,8.866775)
(0.815469,8.823766)(0.839072,8.780567)(0.855451,8.751663)(0.872373,8.722675)(0.889866,8.693605)
(0.907959,8.664451)(0.926685,8.635214)(0.946076,8.605895)(0.966168,8.576494)(0.987000,8.547011)
(0.997707,8.532239)

\newrgbcolor{color163.045}{0  0  0}
\psline[plotstyle=line,linejoin=1,linestyle=solid,linewidth=\LineWidth,linecolor=color163.045]
(0.291376,20.000000)(0.292102,19.950816)
\psline[plotstyle=line,linejoin=1,linestyle=solid,linewidth=\LineWidth,linecolor=color163.045]
(0.998110,5.095745)(1.000000,5.082671)
\psline[plotstyle=line,linejoin=1,linestyle=solid,linewidth=\LineWidth,linecolor=color163.045]
(0.292102,19.950816)(0.302649,19.264294)(0.313385,18.610662)(0.324314,17.987523)(0.335440,17.392706)
(0.346767,16.824247)(0.358300,16.280359)(0.370043,15.759417)(0.382002,15.259937)(0.394181,14.780562)
(0.406584,14.320049)(0.419218,13.877257)(0.432087,13.451137)(0.445196,13.040722)(0.458551,12.645121)
(0.472158,12.263510)(0.486022,11.895126)(0.500149,11.539262)(0.514546,11.195261)(0.529218,10.862515)
(0.544172,10.540455)(0.559415,10.228552)(0.574953,9.926313)(0.590793,9.633277)(0.606942,9.349013)
(0.623408,9.073117)(0.640198,8.805211)(0.657321,8.544939)(0.671263,8.341995)(0.685426,8.143561)
(0.699816,7.949485)(0.714436,7.759620)(0.729291,7.573827)(0.744385,7.391972)(0.759723,7.213927)
(0.775310,7.039573)(0.791150,6.868792)(0.807250,6.701474)(0.823613,6.537512)(0.840245,6.376806)
(0.857151,6.219257)(0.874338,6.064773)(0.891809,5.913265)(0.909572,5.764646)(0.927633,5.618835)
(0.945996,5.475754)(0.964669,5.335326)(0.983657,5.197479)(0.998110,5.095745)

\newrgbcolor{color164.045}{0  0  1}

\newrgbcolor{color165.045}{1  0  0}

\newrgbcolor{color166.045}{1  0  0}
\psline[plotstyle=line,linejoin=1,linestyle=dashed,dash=3pt 2pt 1pt 2pt,linewidth=\LineWidth,linecolor=color166.045]
(0.000000,-5.000000)(0.000000,20.000000)
\end{pspicture}%
\caption{ The shaded region shows the details of the stability region  of Figure~\ref{fig-01} near the origin $(0,\,0)$. }
\label{fig-02}
\end{center}
\end{figure}

\begin{figure}[H]
\begin{center}
\psset{xunit=0.023256\plotwidth,yunit=0.037558\plotwidth}%
\scalebox{0.9}{
\begin{pspicture}(-6.359447,-3.333333)(41.198157,20.491228)%


\psline[linewidth=\AxesLineWidth,linecolor=GridColor](0.000000,-1.000000)(0.000000,-0.680491)
\psline[linewidth=\AxesLineWidth,linecolor=GridColor](5.000000,-1.000000)(5.000000,-0.680491)
\psline[linewidth=\AxesLineWidth,linecolor=GridColor](10.000000,-1.000000)(10.000000,-0.680491)
\psline[linewidth=\AxesLineWidth,linecolor=GridColor](15.000000,-1.000000)(15.000000,-0.680491)
\psline[linewidth=\AxesLineWidth,linecolor=GridColor](20.000000,-1.000000)(20.000000,-0.680491)
\psline[linewidth=\AxesLineWidth,linecolor=GridColor](25.000000,-1.000000)(25.000000,-0.680491)
\psline[linewidth=\AxesLineWidth,linecolor=GridColor](30.000000,-1.000000)(30.000000,-0.680491)
\psline[linewidth=\AxesLineWidth,linecolor=GridColor](35.000000,-1.000000)(35.000000,-0.680491)
\psline[linewidth=\AxesLineWidth,linecolor=GridColor](40.000000,-1.000000)(40.000000,-0.680491)
\psline[linewidth=\AxesLineWidth,linecolor=GridColor](-2.000000,0.000000)(-1.484000,0.000000)
\psline[linewidth=\AxesLineWidth,linecolor=GridColor](-2.000000,2.000000)(-1.484000,2.000000)
\psline[linewidth=\AxesLineWidth,linecolor=GridColor](-2.000000,4.000000)(-1.484000,4.000000)
\psline[linewidth=\AxesLineWidth,linecolor=GridColor](-2.000000,6.000000)(-1.484000,6.000000)
\psline[linewidth=\AxesLineWidth,linecolor=GridColor](-2.000000,8.000000)(-1.484000,8.000000)
\psline[linewidth=\AxesLineWidth,linecolor=GridColor](-2.000000,10.000000)(-1.484000,10.000000)
\psline[linewidth=\AxesLineWidth,linecolor=GridColor](-2.000000,12.000000)(-1.484000,12.000000)
\psline[linewidth=\AxesLineWidth,linecolor=GridColor](-2.000000,14.000000)(-1.484000,14.000000)
\psline[linewidth=\AxesLineWidth,linecolor=GridColor](-2.000000,16.000000)(-1.484000,16.000000)
\psline[linewidth=\AxesLineWidth,linecolor=GridColor](-2.000000,18.000000)(-1.484000,18.000000)
\psline[linewidth=\AxesLineWidth,linecolor=GridColor](-2.000000,20.000000)(-1.484000,20.000000)

{ \footnotesize 
\rput[t](0.000000,-1.319509){$0$}
\rput[t](5.000000,-1.319509){$5$}
\rput[t](10.000000,-1.319509){$10$}
\rput[t](15.000000,-1.319509){$15$}
\rput[t](20.000000,-1.319509){$20$}
\rput[t](25.000000,-1.319509){$25$}
\rput[t](30.000000,-1.319509){$30$}
\rput[t](35.000000,-1.319509){$35$}
\rput[t](40.000000,-1.319509){$40$}
\rput[r](-2.516000,0.000000){$0$}
\rput[r](-2.516000,2.000000){$2$}
\rput[r](-2.516000,4.000000){$4$}
\rput[r](-2.516000,6.000000){$6$}
\rput[r](-2.516000,8.000000){$8$}
\rput[r](-2.516000,10.000000){$10$}
\rput[r](-2.516000,12.000000){$12$}
\rput[r](-2.516000,14.000000){$14$}
\rput[r](-2.516000,16.000000){$16$}
\rput[r](-2.516000,18.000000){$18$}
\rput[r](-2.516000,20.000000){$20$}
} 

\psframe[linewidth=\AxesLineWidth,dimen=middle](-2.000000,-1.000000)(41.000000,20.000000)

{ \small 
\rput[b](19.500000,-3.833333){
\begin{tabular}{c}
$\delta$\\
\end{tabular}
}

\rput[t]{90}(-7.359447,9.500000){
\begin{tabular}{c}
$h_1$\\
\end{tabular}
}
} 

\pspolygon[fillstyle=vlines, fillcolor=black,linecolor=red,linestyle=dashed](0,0)
(0.238811,0.201732)(0.248281,0.229814)(0.257975,0.256157)(0.267896,0.280875)(0.278049,0.304075)
(0.288436,0.325851)(0.298085,0.344489)(0.307933,0.362086)(0.317983,0.378699)(0.328239,0.394381)
(0.339760,0.410616)(0.351538,0.425845)(0.363576,0.440125)(0.375878,0.453508)(0.388449,0.466043)
(0.401294,0.477775)(0.414418,0.488745)(0.427825,0.498992)(0.441520,0.508554)(0.455509,0.517463)
(0.469797,0.525752)(0.485732,0.534121)(0.502037,0.541822)(0.518719,0.548890)(0.535788,0.555354)
(0.554723,0.561710)(0.574132,0.567427)(0.594026,0.572536)(0.616005,0.577392)(0.638575,0.581613)
(0.663431,0.585466)(0.689005,0.588663)(0.717097,0.591386)(0.746057,0.593439)(0.777809,0.594924)
(0.812571,0.595756)(0.850587,0.595854)(0.892133,0.595142)(0.937520,0.593544)(0.987097,0.590992)
(1.043674,0.587244)(1.105530,0.582336)(1.178573,0.575701)(1.264412,0.567044)(1.371201,0.555386)
(1.516542,0.538580)(1.968983,0.484684)(2.173460,0.461046)(2.357986,0.440473)(2.536848,0.421302)
(2.712536,0.403239)(2.889121,0.385853)(3.065100,0.369280)(3.246592,0.352951)(3.433335,0.336925)
(3.624987,0.321257)(3.821112,0.305998)(4.021168,0.291192)(4.235585,0.276124)(4.454043,0.261568)
(4.675729,0.247563)(4.913324,0.233350)(5.153940,0.219736)(5.396379,0.206755)(5.656015,0.193611)
(5.934772,0.180303)(6.215448,0.167679)(6.517008,0.154911)(6.819475,0.142864)(7.144599,0.130690)
(7.469134,0.119272)(7.818026,0.107743)(8.194176,0.096104)(8.568505,0.085261)(8.972048,0.074321)
(9.408430,0.063284)(9.840917,0.053081)(10.308408,0.042794)
(10.017515,0.186551)(9.300219,0.234073)(8.065004,0.334663)(6.637259,0.494488)(5.650407,0.648928)(4.459578,0.920324)
(3.550766,1.243066)(2.968643,1.549207)(2.392867,1.993828)(2.061999,2.359027)(1.801159,2.739878)(1.162526,4.383958)(1.005551,5.105258)(0.849523,6.085163)(0.488747,10.738753)
(0.223280,19.751151)(0.223280,19.98)(0.091212,19.98)(0,19.98)

\newrgbcolor{color161.0573}{0  0  1}
\psline[plotstyle=line,linejoin=1,linestyle=solid,linewidth=\LineWidth,linecolor=color161.0573]
(0.095196,-1.000000)(0.095243,-0.998980)
\psline[plotstyle=line,linejoin=1,linestyle=solid,linewidth=\LineWidth,linecolor=color161.0573]
(40.924894,-0.152520)(41.000000,-0.152660)
\psline[plotstyle=line,linejoin=1,linestyle=solid,linewidth=\LineWidth,linecolor=color161.0573]
(0.095243,-0.998980)(0.101399,-0.874262)(0.107766,-0.760578)(0.114347,-0.656710)(0.120569,-0.569217)
(0.126974,-0.488376)(0.133564,-0.413560)(0.140342,-0.344219)(0.147309,-0.279861)(0.154467,-0.220052)
(0.161819,-0.164403)(0.169367,-0.112567)(0.177114,-0.064231)(0.185061,-0.019118)(0.193212,0.023025)
(0.201569,0.062426)(0.210135,0.099289)(0.218912,0.133800)(0.227903,0.166129)(0.237113,0.196430)
(0.246542,0.224841)(0.256195,0.251491)(0.266075,0.276497)(0.276186,0.299965)(0.286530,0.321994)
(0.296139,0.340847)(0.305947,0.358647)(0.315957,0.375452)(0.326171,0.391317)(0.336593,0.406291)
(0.348300,0.421788)(0.360267,0.436322)(0.372496,0.449945)(0.384994,0.462706)(0.397764,0.474652)
(0.410811,0.485826)(0.424140,0.496267)(0.437756,0.506012)(0.451664,0.515096)(0.465870,0.523551)
(0.481714,0.532092)(0.497925,0.539958)(0.514513,0.547181)(0.531484,0.553793)(0.550312,0.560301)
(0.569610,0.566163)(0.589391,0.571409)(0.611246,0.576406)(0.633688,0.580760)(0.658403,0.584749)
(0.683832,0.588074)(0.711765,0.590928)(0.740560,0.593103)(0.772130,0.594712)(0.806693,0.595668)
(0.844492,0.595890)(0.885799,0.595300)(0.930922,0.593824)(0.980211,0.591391)(1.034058,0.587936)
(1.095425,0.583188)(1.165226,0.576970)(1.247372,0.568822)(1.349807,0.557782)(1.486350,0.542130)
(1.977818,0.483647)(2.178366,0.460489)(2.363355,0.439886)(2.542683,0.420690)(2.718844,0.402605)
(2.895921,0.385199)(3.072408,0.368608)(3.254441,0.352262)(3.441760,0.336220)(3.634024,0.320537)
(3.830796,0.305264)(4.031535,0.290445)(4.235585,0.276124)(4.454043,0.261568)(4.675729,0.247563)
(4.913324,0.233350)(5.153940,0.219736)(5.396379,0.206755)(5.656015,0.193611)(5.934772,0.180303)
(6.215448,0.167679)(6.517008,0.154911)(6.819475,0.142864)(7.144599,0.130690)(7.469134,0.119272)
(7.818026,0.107743)(8.194176,0.096104)(8.568505,0.085261)(8.972048,0.074321)(9.408430,0.063284)
(9.840917,0.053081)(10.308408,0.042794)(10.815412,0.032424)(11.315070,0.022922)(11.856477,0.013349)
(12.445182,0.003705)(13.087765,-0.006013)(13.718625,-0.014821)(14.406317,-0.023691)(15.158989,-0.032621)
(15.890453,-0.040611)(16.689199,-0.048650)(17.565070,-0.056740)(18.529928,-0.064881)(19.458445,-0.072046)
(20.477800,-0.079251)(21.602134,-0.086497)(22.848682,-0.093783)(24.238681,-0.101111)(25.564166,-0.107425)
(27.035001,-0.113771)(28.676632,-0.120148)(30.520816,-0.126557)(32.607710,-0.132998)(34.568932,-0.138391)
(36.772101,-0.143806)(39.265111,-0.149245)(40.924894,-0.152520)

\newrgbcolor{color162.0568}{0  0  0}
\psline[plotstyle=line,linejoin=1,linestyle=solid,linewidth=\LineWidth,linecolor=color162.0568]
(0.264768,20.000000)(0.274186,19.304109)
\psline[plotstyle=line,linejoin=1,linestyle=solid,linewidth=\LineWidth,linecolor=color162.0568]
(39.460482,14.494429)(39.464193,20.000000)
\psline[plotstyle=line,linejoin=1,linestyle=solid,linewidth=\LineWidth,linecolor=color162.0568]
(0.274186,19.304109)(0.284375,18.605197)(0.294567,17.954456)(0.304761,17.347070)(0.314958,16.778843)
(0.325158,16.246104)(0.335361,15.745626)(0.345566,15.274564)(0.355774,14.830395)(0.365985,14.410879)
(0.376199,14.014017)(0.386416,13.638020)(0.396635,13.281283)(0.406858,12.942362)(0.417083,12.619954)
(0.427312,12.312879)(0.437543,12.020067)(0.447778,11.740545)(0.458015,11.473428)(0.468256,11.217907)
(0.478500,10.973241)(0.488747,10.738753)(0.498997,10.513819)(0.509250,10.297865)(0.519506,10.090364)
(0.529766,9.890828)(0.540028,9.698806)(0.550294,9.513881)(0.560564,9.335665)(0.570836,9.163800)
(0.581112,8.997950)(0.591391,8.837806)(0.601674,8.683076)(0.611960,8.533490)(0.622249,8.388795)
(0.632542,8.248755)(0.642839,8.113147)(0.653138,7.981765)(0.663442,7.854412)(0.673748,7.730907)
(0.684059,7.611076)(0.694373,7.494757)(0.704690,7.381799)(0.715011,7.272056)(0.725336,7.165393)
(0.735664,7.061682)(0.745996,6.960802)(0.756332,6.862637)(0.766671,6.767079)(0.777015,6.674025)
(0.787362,6.583378)(0.797712,6.495045)(0.808067,6.408938)(0.818425,6.324973)(0.828787,6.243073)
(0.839153,6.163160)(0.849523,6.085163)(0.859897,6.009013)(0.870275,5.934647)(0.880656,5.862000)
(0.891042,5.791014)(0.901432,5.721633)(0.911825,5.653802)(0.922223,5.587470)(0.932625,5.522587)
(0.943030,5.459106)(0.953440,5.396981)(0.963854,5.336170)(0.974272,5.276631)(0.984694,5.218325)
(0.995121,5.161212)(1.005551,5.105258)(1.015986,5.050425)(1.026425,4.996682)(1.036869,4.943995)
(1.047316,4.892334)(1.057768,4.841668)(1.078684,4.743210)(1.099618,4.648403)(1.120570,4.557048)
(1.141539,4.468957)(1.162526,4.383958)(1.183531,4.301888)(1.204554,4.222598)(1.225596,4.145948)
(1.246655,4.071807)(1.267734,4.000053)(1.288831,3.930571)(1.309947,3.863253)(1.331082,3.798000)
(1.352237,3.734717)(1.373410,3.673314)(1.394603,3.613709)(1.415816,3.555823)(1.437049,3.499581)
(1.458301,3.444914)(1.479574,3.391756)(1.500866,3.340044)(1.522179,3.289719)(1.543512,3.240726)
(1.564866,3.193012)(1.586241,3.146526)(1.607637,3.101221)(1.629053,3.057053)(1.650491,3.013977)
(1.671950,2.971954)(1.693431,2.930945)(1.714933,2.890913)(1.736456,2.851823)(1.758002,2.813641)
(1.779569,2.776336)(1.801159,2.739878)(1.822770,2.704237)(1.844404,2.669386)(1.866061,2.635298)
(1.887740,2.601948)(1.909441,2.569312)(1.931166,2.537367)(1.963796,2.490696)(1.996477,2.445460)
(2.029212,2.401591)(2.061999,2.359027)(2.094839,2.317710)(2.127734,2.277585)(2.160682,2.238600)
(2.193685,2.200705)(2.226742,2.163855)(2.259855,2.128006)(2.293024,2.093117)(2.326248,2.059148)
(2.359529,2.026064)(2.392867,1.993828)(2.426262,1.962409)(2.459714,1.931774)(2.493224,1.901894)
(2.526793,1.872741)(2.560420,1.844287)(2.594106,1.816508)(2.627852,1.789377)(2.661657,1.762874)
(2.695522,1.736975)(2.729447,1.711659)(2.763433,1.686907)(2.797480,1.662699)(2.831589,1.639016)
(2.877163,1.608228)(2.922847,1.578304)(2.968643,1.549207)(3.014551,1.520903)(3.060571,1.493359)
(3.106705,1.466543)(3.152952,1.440426)(3.199315,1.414980)(3.245793,1.390178)(3.292388,1.365996)
(3.339099,1.342410)(3.385928,1.319397)(3.432876,1.296935)(3.491727,1.269604)(3.550766,1.243066)
(3.609993,1.217285)(3.669409,1.192229)(3.729016,1.167866)(3.788815,1.144167)(3.848807,1.121104)
(3.908993,1.098651)(3.969375,1.076782)(4.029952,1.055475)(4.090728,1.034708)(4.163919,1.010469)
(4.237399,0.986942)(4.311167,0.964096)(4.385226,0.941899)(4.459578,0.920324)(4.534223,0.899343)
(4.609163,0.878932)(4.696969,0.855807)(4.785180,0.833390)(4.873800,0.811647)(4.962830,0.790547)
(5.052272,0.770061)(5.142128,0.750162)(5.232399,0.730824)(5.336078,0.709380)(5.440304,0.688603)
(5.545080,0.668462)(5.650407,0.648928)(5.756287,0.629972)(5.876067,0.609306)(5.996551,0.589305)
(6.117741,0.569938)(6.239640,0.551173)(6.362249,0.532982)(6.499315,0.513413)(6.637259,0.494488)
(6.776083,0.476174)(6.915787,0.458444)(7.070477,0.439581)(7.226232,0.421357)(7.383049,0.403741)
(7.540927,0.386703)(7.714365,0.368744)(7.889059,0.351407)(8.065004,0.334663)(8.257015,0.317159)
(8.450480,0.300284)(8.645388,0.284006)(8.856890,0.267112)(9.070038,0.250844)(9.300219,0.234073)
(9.532246,0.217951)(9.766097,0.202445)(10.017515,0.186551)(10.270939,0.171290)(10.542359,0.155734)
(10.815950,0.140823)(11.107944,0.125703)(11.402246,0.111236)(11.715322,0.096639)(12.030801,0.082698)
(12.365375,0.068699)(12.702389,0.055359)(13.058745,0.042026)(13.434648,0.028762)(13.813013,0.016185)
(14.211003,0.003733)(14.628707,-0.008536)(15.048612,-0.020100)(15.488061,-0.031434)(15.946968,-0.042486)
(16.425170,-0.053208)(16.922408,-0.063552)(17.438326,-0.073473)(17.954650,-0.082627)(18.488628,-0.091326)
(19.039613,-0.099530)(19.606831,-0.107199)(20.189375,-0.114298)(20.786206,-0.120791)(21.396151,-0.126644)
(22.017903,-0.131826)(22.650027,-0.136305)(23.290972,-0.140055)(23.939082,-0.143047)(24.576420,-0.145212)
(25.218009,-0.146612)(25.846538,-0.147217)(26.476487,-0.147051)(27.091336,-0.146123)(27.705000,-0.144415)
(28.301894,-0.141974)(28.881762,-0.138829)(29.444490,-0.135008)(29.990097,-0.130534)(30.518731,-0.125430)
(31.030658,-0.119712)(31.526257,-0.113396)(31.994766,-0.106662)(32.448705,-0.099369)(32.878109,-0.091716)
(33.294844,-0.083520)(33.689609,-0.074992)(34.063923,-0.066146)(34.419212,-0.056989)(34.756808,-0.047519)
(35.068858,-0.038012)(35.356990,-0.028501)(35.631389,-0.018690)(35.884226,-0.008895)(36.116567,0.000852)
(36.329304,0.010515)(36.523174,0.020046)(36.698783,0.029384)(36.864904,0.038947)(37.013563,0.048215)
(37.153298,0.057651)(37.284283,0.067248)(37.398517,0.076316)(37.504310,0.085396)(37.601750,0.094435)
(37.690909,0.103363)(37.779932,0.113005)(37.860759,0.122489)(37.933430,0.131701)(38.006042,0.141653)
(38.070545,0.151211)(38.126958,0.160205)(38.183351,0.169866)(38.239728,0.180287)(38.288042,0.189906)
(38.336350,0.200244)(38.376605,0.209477)(38.416860,0.219340)(38.457116,0.229909)(38.497375,0.241270)
(38.529585,0.250998)(38.561799,0.261359)(38.594018,0.272423)(38.626242,0.284270)(38.650415,0.293724)
(38.674591,0.303715)(38.698771,0.314293)(38.722956,0.325516)(38.747146,0.337446)(38.771341,0.350158)
(38.795541,0.363735)(38.819748,0.378273)(38.835889,0.388553)(38.852033,0.399345)(38.868180,0.410692)
(38.884330,0.422637)(38.900483,0.435232)(38.916640,0.448533)(38.932800,0.462604)(38.948963,0.477514)
(38.965130,0.493343)(38.981301,0.510183)(38.997476,0.528134)(39.005564,0.537563)(39.013654,0.547315)
(39.021745,0.557407)(39.029837,0.567858)(39.037929,0.578687)(39.046023,0.589917)(39.054118,0.601569)
(39.062214,0.613670)(39.070311,0.626245)(39.078409,0.639323)(39.086508,0.652936)(39.094609,0.667118)
(39.102710,0.681906)(39.110813,0.697340)(39.118916,0.713464)(39.127021,0.730325)(39.135128,0.747977)
(39.143235,0.766477)(39.151343,0.785887)(39.159453,0.806278)(39.167564,0.827728)(39.175676,0.850320)
(39.183790,0.874150)(39.191904,0.899323)(39.200020,0.925958)(39.208138,0.954186)(39.216256,0.984156)
(39.224376,1.016036)(39.232497,1.050014)(39.240620,1.086307)(39.248744,1.125160)(39.256869,1.166858)
(39.264996,1.211724)(39.273124,1.260137)(39.281253,1.312536)(39.289384,1.369436)(39.297516,1.431447)
(39.305650,1.499291)(39.313785,1.573835)(39.321922,1.656126)(39.330060,1.747441)(39.338200,1.849354)
(39.346341,1.963829)(39.354484,2.093346)(39.362628,2.241083)(39.370774,2.411183)(39.378921,2.609145)
(39.387070,2.842437)(39.395220,3.121452)(39.403372,3.461106)(39.411526,3.883596)(39.419681,4.423452)
(39.427838,5.137501)(39.435997,6.126311)(39.444157,7.586313)(39.452319,9.959771)(39.460482,14.494429)

\newrgbcolor{color163.0568}{1  0  0}
\psline[plotstyle=line,linejoin=1,linestyle=dashed,dash=3pt 2pt 1pt 2pt,linewidth=\LineWidth,linecolor=color163.0568]
(-2.000000,0.000000)(2.000000,0.000000)
\psline[plotstyle=line,linejoin=1,linestyle=dashed,dash=3pt 2pt 1pt 2pt,linewidth=\LineWidth,linecolor=color163.0568]
(37.000000,0.000000)(41.000000,0.000000)
\psline[plotstyle=line,linejoin=1,linestyle=dashed,dash=3pt 2pt 1pt 2pt,linewidth=\LineWidth,linecolor=color163.0568]
(2.000000,0.000000)(37.000000,0.000000)

\newrgbcolor{color164.0568}{1  0  0}
\psline[plotstyle=line,linejoin=1,linestyle=dashed,dash=3pt 2pt 1pt 2pt,linewidth=\LineWidth,linecolor=color164.0568]
(0.000000,19.000000)(0.000000,20.000000)
\psline[plotstyle=line,linejoin=1,linestyle=dashed,dash=3pt 2pt 1pt 2pt,linewidth=\LineWidth,linecolor=color164.0568]
(0.000000,-1.000000)(0.000000,4.000000)(0.000000,9.000000)(0.000000,14.000000)(0.000000,19.000000)

{ \small 
\newrgbcolor{color234.0145}{0  0  0}
\psline[linestyle=solid,linewidth=0.5pt,linecolor=color234.0145,arrowsize=1.5pt 3,arrowlength=2,arrowinset=0.3]{->}(37,2)(40,-0.25)
\uput{0pt}[33.769825](22,2){%
\psframebox*[framesep=1pt]{\begin{tabular}{@{}c@{}}
$\beta\in (2(n-1)\pi,\,\tilde{\beta}_{n,\,1})$\\[-0.3ex]
\end{tabular}}}
}

{ \small 
\newrgbcolor{color234.0145}{0  0  0}
\psline[linestyle=solid,linewidth=0.5pt,linecolor=color234.0145,arrowsize=1.5pt 3,arrowlength=2,arrowinset=0.3]{->}(32,11)(39,11)
\uput{0pt}[33.769825](22,10.5){%
\psframebox*[framesep=1pt]{\begin{tabular}{@{}c@{}}
$\beta\in (\beta_n^*,\,2n\pi)$\\[-0.3ex]
\end{tabular}}}
}
\end{pspicture}
}%
\caption{The curves of  $(\delta,\,h_1)$, $\delta>0$ where   $\xi=0.2$, $q=0.8$,  $n=1$, $\beta_1\frac{\sqrt{2\sqrt{5}-2}}{4\sqrt{5}-8}=8.955929>q$,  $\beta_n^*=3.26398905$, $\tilde{\beta}_{n,\,1}=2.28275758$. The upper branch is the curve determined by (\ref{lemma-2-3-1}) with $\beta\in (\beta_n^*,\,2n\pi)$ and the lower one with $\beta\in (2(n-1)\pi,\,\tilde{\beta}_{n,\,1})$. The shaded region near the origin $(0,\,0)$ is the stability region.}
\label{fig-4}
\end{center}
\end{figure}
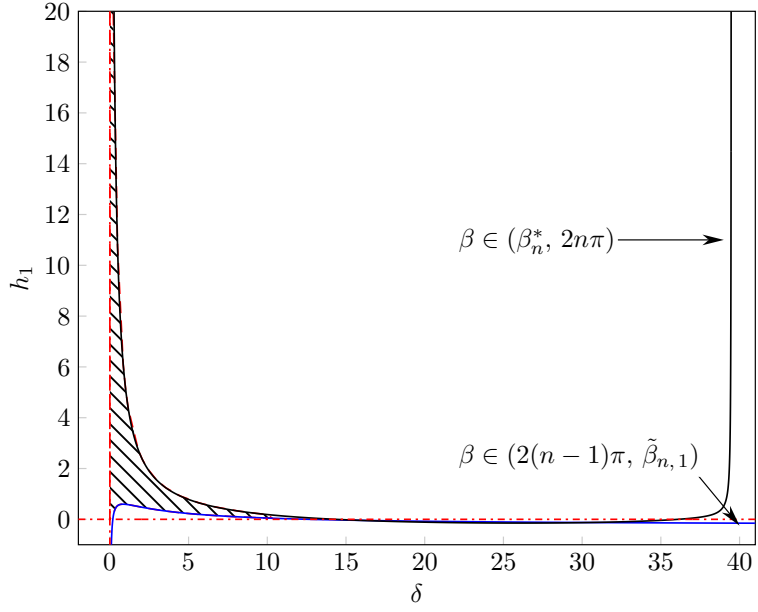

\begin{figure}[H]
\psset{xunit=0.022727\plotwidth,yunit=0.071701\plotwidth}
\begin{center}
\begin{pspicture}(-6.460829,-2.222222)(42.202765,10.257310)%


\psline[linewidth=\AxesLineWidth,linecolor=GridColor](0.000000,-1.000000)(0.000000,-0.832638)
\psline[linewidth=\AxesLineWidth,linecolor=GridColor](5.000000,-1.000000)(5.000000,-0.832638)
\psline[linewidth=\AxesLineWidth,linecolor=GridColor](10.000000,-1.000000)(10.000000,-0.832638)
\psline[linewidth=\AxesLineWidth,linecolor=GridColor](15.000000,-1.000000)(15.000000,-0.832638)
\psline[linewidth=\AxesLineWidth,linecolor=GridColor](20.000000,-1.000000)(20.000000,-0.832638)
\psline[linewidth=\AxesLineWidth,linecolor=GridColor](25.000000,-1.000000)(25.000000,-0.832638)
\psline[linewidth=\AxesLineWidth,linecolor=GridColor](30.000000,-1.000000)(30.000000,-0.832638)
\psline[linewidth=\AxesLineWidth,linecolor=GridColor](35.000000,-1.000000)(35.000000,-0.832638)
\psline[linewidth=\AxesLineWidth,linecolor=GridColor](40.000000,-1.000000)(40.000000,-0.832638)
\psline[linewidth=\AxesLineWidth,linecolor=GridColor](-2.000000,-1.000000)(-1.472000,-1.000000)
\psline[linewidth=\AxesLineWidth,linecolor=GridColor](-2.000000,0.000000)(-1.472000,0.000000)
\psline[linewidth=\AxesLineWidth,linecolor=GridColor](-2.000000,1.000000)(-1.472000,1.000000)
\psline[linewidth=\AxesLineWidth,linecolor=GridColor](-2.000000,2.000000)(-1.472000,2.000000)
\psline[linewidth=\AxesLineWidth,linecolor=GridColor](-2.000000,3.000000)(-1.472000,3.000000)
\psline[linewidth=\AxesLineWidth,linecolor=GridColor](-2.000000,4.000000)(-1.472000,4.000000)
\psline[linewidth=\AxesLineWidth,linecolor=GridColor](-2.000000,5.000000)(-1.472000,5.000000)
\psline[linewidth=\AxesLineWidth,linecolor=GridColor](-2.000000,6.000000)(-1.472000,6.000000)
\psline[linewidth=\AxesLineWidth,linecolor=GridColor](-2.000000,7.000000)(-1.472000,7.000000)
\psline[linewidth=\AxesLineWidth,linecolor=GridColor](-2.000000,8.000000)(-1.472000,8.000000)
\psline[linewidth=\AxesLineWidth,linecolor=GridColor](-2.000000,9.000000)(-1.472000,9.000000)
\psline[linewidth=\AxesLineWidth,linecolor=GridColor](-2.000000,10.000000)(-1.472000,10.000000)

{ \footnotesize 
\rput[t](0.000000,-1.167362){$0$}
\rput[t](5.000000,-1.167362){$5$}
\rput[t](10.000000,-1.167362){$10$}
\rput[t](15.000000,-1.167362){$15$}
\rput[t](20.000000,-1.167362){$20$}
\rput[t](25.000000,-1.167362){$25$}
\rput[t](30.000000,-1.167362){$30$}
\rput[t](35.000000,-1.167362){$35$}
\rput[t](40.000000,-1.167362){$40$}
\rput[r](-2.528000,-1.000000){$-1$}
\rput[r](-2.528000,0.000000){$0$}
\rput[r](-2.528000,1.000000){$1$}
\rput[r](-2.528000,2.000000){$2$}
\rput[r](-2.528000,3.000000){$3$}
\rput[r](-2.528000,4.000000){$4$}
\rput[r](-2.528000,5.000000){$5$}
\rput[r](-2.528000,6.000000){$6$}
\rput[r](-2.528000,7.000000){$7$}
\rput[r](-2.528000,8.000000){$8$}
\rput[r](-2.528000,9.000000){$9$}
\rput[r](-2.528000,10.000000){$10$}
} 

\psframe[linewidth=\AxesLineWidth,dimen=middle](-2.000000,-1.000000)(42.000000,10.000000)

{ \small 
\rput[b](20.000000,-2.522222){
\begin{tabular}{c}
$\delta$\\
\end{tabular}
}

\rput[t]{90}(-6.460829,4.500000){
\begin{tabular}{c}
$h_2$\\
\end{tabular}
}
} 


\pspolygon[fillstyle=vlines, fillcolor=black,linecolor=red,linestyle=dashed](0,0)
(0.539656,0.000000)(0.559562,0.014234)(0.579788,0.027780)(0.602054,0.041736)(0.624687,0.055003)
(0.647681,0.067635)(0.672843,0.080584)(0.698412,0.092908)(0.726252,0.105478)(0.756447,0.118224)
(0.787147,0.130347)(0.820306,0.142607)(0.856010,0.154959)(0.894342,0.167364)(0.935384,0.179794)
(0.979217,0.192230)(1.025917,0.204660)(1.077730,0.217613)(1.134826,0.231028)(1.197360,0.244872)
(1.267753,0.259602)(1.348530,0.275643)(1.446999,0.294305)(1.592208,0.320801)(1.789200,0.356489)
(1.894459,0.376231)(1.979666,0.392950)(2.051992,0.407891)(2.113921,0.421411)(2.170157,0.434426)
(2.220769,0.446885)(2.265886,0.458721)(2.307745,0.470450)(2.346385,0.482051)(2.381866,0.493496)
(2.414268,0.504749)(2.443690,0.515771)(2.470246,0.526516)(2.494066,0.536934)(2.515290,0.546969)
(2.535596,0.557377)(2.553475,0.567340)(2.570472,0.577668)(2.585236,0.587475)(2.599177,0.597630)
(2.611100,0.607182)(2.622273,0.617058)(2.631660,0.626238)(2.640385,0.635712)(2.648421,0.645493)
(2.655739,0.655601)(2.661619,0.664874)(2.666886,0.674434)(2.671520,0.684294)(2.675496,0.694470)
(2.678792,0.704980)(2.681381,0.715841)(2.683240,0.727073)(2.684245,0.737222)(2.684652,0.747685)
(2.684440,0.758478)(2.683592,0.769618)(2.682086,0.781124)(2.679532,0.794744)(2.676059,0.808898)
(2.671634,0.823618)(2.666222,0.838941)(2.659785,0.854905)(2.652286,0.871554)(2.643687,0.888934)
(2.633945,0.907096)(2.621567,0.928533)(2.607625,0.951118)(2.592054,0.974947)(2.572757,1.003018)
(2.551261,1.032927)(2.524949,1.068181)(2.492889,1.109785)(2.447580,1.167079)(2.312530,1.335239)
(2.258357,1.403740)(2.214425,1.460336)(2.172877,1.514951)(2.134515,1.566485)(2.100085,1.613757)
(2.064178,1.664194)(2.033093,1.708881)(2.000920,1.756212)(1.967631,1.806428)(1.933195,1.859805)
(1.904798,1.904988)(1.875631,1.952567)(1.845675,2.002739)(1.814914,2.055722)(1.791303,2.097450)
(1.767221,2.141009)(1.742660,2.186523)(1.717611,2.234127)(1.692067,2.283968)(1.666019,2.336208)
(1.639458,2.391026)(1.612375,2.448616)(1.584761,2.509196)(1.556608,2.573004)(1.537534,2.617466)
(1.518213,2.663567)(1.498642,2.711398)(1.478818,2.761060)(1.458739,2.812658)(1.438400,2.866310)
(1.417800,2.922140)(1.396934,2.980284)(1.375800,3.040888)(1.354395,3.104113)(1.332715,3.170133)
(1.310756,3.239136)(1.288517,3.311331)(1.265992,3.386944)(1.243179,3.466224)(1.231664,3.507324)
(1.220075,3.549445)(1.208412,3.592626)(1.196675,3.636907)(1.184863,3.682332)(1.172975,3.728945)
(1.161013,3.776793)(1.148973,3.825927)(1.136858,3.876398)(1.124665,3.928262)(1.112395,3.981577)
(1.100046,4.036407)(1.087620,4.092815)(1.075114,4.150871)(1.062528,4.210648)(1.049863,4.272225)
(1.037117,4.335683)(1.024290,4.401111)(1.011381,4.468601)(0.998390,4.538252)(0.985317,4.610171)
(0.972161,4.684470)(0.958921,4.761268)(0.945597,4.840695)(0.932188,4.922887)(0.918694,5.007991)
(0.905115,5.096166)(0.891449,5.187581)(0.877696,5.282417)(0.863856,5.380870)(0.849927,5.483151)
(0.835910,5.589489)(0.821805,5.700129)(0.807609,5.815337)(0.793323,5.935404)(0.778946,6.060642)
(0.764478,6.191394)(0.749918,6.328031)(0.735265,6.470961)(0.720519,6.620629)(0.705679,6.777521)
(0.690745,6.942176)(0.675715,7.115182)(0.660590,7.297194)(0.645369,7.488931)(0.630050,7.691196)
(0.614634,7.904880)(0.599120,8.130979)(0.583507,8.370606)(0.567794,8.625014)(0.551981,8.895614)
(0.536067,9.184002)(0.520051,9.491993)(0.503933,9.821656)
(0.503933,9.99)(0,9.99)

\newrgbcolor{color161.0573}{0  0  1}
\psline[plotstyle=line,linejoin=1,linestyle=solid,linewidth=\LineWidth,linecolor=color161.0573]
(0.503933,9.821656)(0.495754,10.000000)
\psline[plotstyle=line,linejoin=1,linestyle=solid,linewidth=\LineWidth,linecolor=color161.0573]
(0.539656,0.000000)(0.559562,0.014234)(0.579788,0.027780)(0.602054,0.041736)(0.624687,0.055003)
(0.647681,0.067635)(0.672843,0.080584)(0.698412,0.092908)(0.726252,0.105478)(0.756447,0.118224)
(0.787147,0.130347)(0.820306,0.142607)(0.856010,0.154959)(0.894342,0.167364)(0.935384,0.179794)
(0.979217,0.192230)(1.025917,0.204660)(1.077730,0.217613)(1.134826,0.231028)(1.197360,0.244872)
(1.267753,0.259602)(1.348530,0.275643)(1.446999,0.294305)(1.592208,0.320801)(1.789200,0.356489)
(1.894459,0.376231)(1.979666,0.392950)(2.051992,0.407891)(2.113921,0.421411)(2.170157,0.434426)
(2.220769,0.446885)(2.265886,0.458721)(2.307745,0.470450)(2.346385,0.482051)(2.381866,0.493496)
(2.414268,0.504749)(2.443690,0.515771)(2.470246,0.526516)(2.494066,0.536934)(2.515290,0.546969)
(2.535596,0.557377)(2.553475,0.567340)(2.570472,0.577668)(2.585236,0.587475)(2.599177,0.597630)
(2.611100,0.607182)(2.622273,0.617058)(2.631660,0.626238)(2.640385,0.635712)(2.648421,0.645493)
(2.655739,0.655601)(2.661619,0.664874)(2.666886,0.674434)(2.671520,0.684294)(2.675496,0.694470)
(2.678792,0.704980)(2.681381,0.715841)(2.683240,0.727073)(2.684245,0.737222)(2.684652,0.747685)
(2.684440,0.758478)(2.683592,0.769618)(2.682086,0.781124)(2.679532,0.794744)(2.676059,0.808898)
(2.671634,0.823618)(2.666222,0.838941)(2.659785,0.854905)(2.652286,0.871554)(2.643687,0.888934)
(2.633945,0.907096)(2.621567,0.928533)(2.607625,0.951118)(2.592054,0.974947)(2.572757,1.003018)
(2.551261,1.032927)(2.524949,1.068181)(2.492889,1.109785)(2.447580,1.167079)(2.312530,1.335239)
(2.258357,1.403740)(2.214425,1.460336)(2.172877,1.514951)(2.134515,1.566485)(2.100085,1.613757)
(2.064178,1.664194)(2.033093,1.708881)(2.000920,1.756212)(1.967631,1.806428)(1.933195,1.859805)
(1.904798,1.904988)(1.875631,1.952567)(1.845675,2.002739)(1.814914,2.055722)(1.791303,2.097450)
(1.767221,2.141009)(1.742660,2.186523)(1.717611,2.234127)(1.692067,2.283968)(1.666019,2.336208)
(1.639458,2.391026)(1.612375,2.448616)(1.584761,2.509196)(1.556608,2.573004)(1.537534,2.617466)
(1.518213,2.663567)(1.498642,2.711398)(1.478818,2.761060)(1.458739,2.812658)(1.438400,2.866310)
(1.417800,2.922140)(1.396934,2.980284)(1.375800,3.040888)(1.354395,3.104113)(1.332715,3.170133)
(1.310756,3.239136)(1.288517,3.311331)(1.265992,3.386944)(1.243179,3.466224)(1.231664,3.507324)
(1.220075,3.549445)(1.208412,3.592626)(1.196675,3.636907)(1.184863,3.682332)(1.172975,3.728945)
(1.161013,3.776793)(1.148973,3.825927)(1.136858,3.876398)(1.124665,3.928262)(1.112395,3.981577)
(1.100046,4.036407)(1.087620,4.092815)(1.075114,4.150871)(1.062528,4.210648)(1.049863,4.272225)
(1.037117,4.335683)(1.024290,4.401111)(1.011381,4.468601)(0.998390,4.538252)(0.985317,4.610171)
(0.972161,4.684470)(0.958921,4.761268)(0.945597,4.840695)(0.932188,4.922887)(0.918694,5.007991)
(0.905115,5.096166)(0.891449,5.187581)(0.877696,5.282417)(0.863856,5.380870)(0.849927,5.483151)
(0.835910,5.589489)(0.821805,5.700129)(0.807609,5.815337)(0.793323,5.935404)(0.778946,6.060642)
(0.764478,6.191394)(0.749918,6.328031)(0.735265,6.470961)(0.720519,6.620629)(0.705679,6.777521)
(0.690745,6.942176)(0.675715,7.115182)(0.660590,7.297194)(0.645369,7.488931)(0.630050,7.691196)
(0.614634,7.904880)(0.599120,8.130979)(0.583507,8.370606)(0.567794,8.625014)(0.551981,8.895614)
(0.536067,9.184002)(0.520051,9.491993)(0.503933,9.821656)

\newrgbcolor{color162.0568}{0  0  0}
\psline[plotstyle=line,linejoin=1,linestyle=solid,linewidth=\LineWidth,linecolor=color162.0568]
(39.447967,8.536224)(39.451088,10.000000)
\psline[plotstyle=line,linejoin=1,linestyle=solid,linewidth=\LineWidth,linecolor=color162.0568]
(34.434283,-0.000000)(35.049038,0.015379)(35.451622,0.026117)(35.782933,0.035666)(36.057781,0.044293)
(36.304922,0.052784)(36.522747,0.061017)(36.709950,0.068802)(36.876643,0.076418)(37.033466,0.084303)
(37.168814,0.091786)(37.293619,0.099353)(37.407676,0.106933)(37.510803,0.114430)(37.614375,0.122682)
(37.706804,0.130762)(37.787958,0.138507)(37.869367,0.146985)(37.939348,0.154926)(38.009512,0.163586)
(38.068120,0.171433)(38.126852,0.179934)(38.185707,0.189179)(38.232879,0.197186)(38.280129,0.205804)
(38.327455,0.215113)(38.374857,0.225202)(38.410459,0.233344)(38.446102,0.242039)(38.481787,0.251344)
(38.517515,0.261331)(38.553283,0.272079)(38.589093,0.283681)(38.612988,0.291944)(38.636902,0.300671)
(38.660834,0.309905)(38.684784,0.319691)(38.708752,0.330082)(38.732737,0.341138)(38.756741,0.352923)
(38.780762,0.365517)(38.792779,0.372143)(38.804800,0.379004)(38.816826,0.386114)(38.828856,0.393487)
(38.840890,0.401137)(38.852929,0.409080)(38.864973,0.417334)(38.877020,0.425919)(38.889072,0.434853)
(38.901128,0.444161)(38.913189,0.453864)(38.925253,0.463990)(38.937322,0.474568)(38.949396,0.485627)
(38.961473,0.497203)(38.973555,0.509333)(38.985641,0.522057)(38.997731,0.535421)(39.009826,0.549475)
(39.021925,0.564274)(39.034028,0.579879)(39.046135,0.596358)(39.058246,0.613788)(39.070362,0.632254)
(39.082481,0.651850)(39.094605,0.672686)(39.106733,0.694883)(39.118865,0.718579)(39.131002,0.743932)
(39.143142,0.771124)(39.155287,0.800362)(39.167435,0.831888)(39.179588,0.865981)(39.191745,0.902970)
(39.203906,0.943240)(39.216071,0.987251)(39.228240,1.035550)(39.240413,1.088796)(39.252590,1.147794)
(39.264771,1.213529)(39.276956,1.287228)(39.289145,1.370433)(39.301339,1.465113)(39.313536,1.573821)
(39.325737,1.699927)(39.337942,1.847975)(39.350151,2.024243)(39.362364,2.237654)(39.374582,2.501339)
(39.386803,2.835446)(39.399027,3.272543)(39.411256,3.868949)(39.423489,4.731192)(39.435726,6.088026)
(39.447967,8.536224)

\newrgbcolor{color163.0568}{1  0  0}
\psline[plotstyle=line,linejoin=1,linestyle=dashed,dash=3pt 2pt 1pt 2pt,linewidth=\LineWidth,linecolor=color163.0568]
(39.000000,0.000000)(42.000000,0.000000)
\psline[plotstyle=line,linejoin=1,linestyle=dashed,dash=3pt 2pt 1pt 2pt,linewidth=\LineWidth,linecolor=color163.0568]
(-1.000000,0.000000)(39.000000,0.000000)

\newrgbcolor{color164.0568}{1  0  0}
\psline[plotstyle=line,linejoin=1,linestyle=dashed,dash=3pt 2pt 1pt 2pt,linewidth=\LineWidth,linecolor=color164.0568]
(0.000000,8.000000)(0.000000,10.000000)
\psline[plotstyle=line,linejoin=1,linestyle=dashed,dash=3pt 2pt 1pt 2pt,linewidth=\LineWidth,linecolor=color164.0568]
(0.000000,-1.000000)(0.000000,2.000000)(0.000000,5.000000)(0.000000,8.000000)

{ \small 
\newrgbcolor{color234.0145}{0  0  0}
\psline[linestyle=solid,linewidth=0.5pt,linecolor=color234.0145,arrowsize=1.5pt 3,arrowlength=2,arrowinset=0.3]{->}(6,1)(3,1)
\uput{0pt}[33.769825](6,0.8){%
\psframebox*[framesep=1pt]{\begin{tabular}{@{}c@{}}
$\beta\in (\gamma_n^*,\,\beta_n^*)$\\[-0.3ex]
\end{tabular}}}
}

{ \small 
\newrgbcolor{color234.0145}{0  0  0}
\psline[linestyle=solid,linewidth=0.5pt,linecolor=color234.0145,arrowsize=1.5pt 3,arrowlength=2,arrowinset=0.3]{->}(35,1)(39,1)
\uput{0pt}[33.769825](25,0.8){%
\psframebox*[framesep=1pt]{\begin{tabular}{@{}c@{}}
$\beta\in (\tilde{\gamma}_n,\,2n\pi)$\\[-0.3ex]
\end{tabular}}}
}
\end{pspicture}
\caption{The curves of  $(\delta,\,h_2)$, $\delta>0$, $h_2>0$ where   $\xi=0.2$, $n=1$,  $q=0.8$ with $\xi<2q$, $\beta_n^*=3.26398905$, $\tilde{\beta}_{n}=3.91714943$, $\gamma_n^*=0.95335728$ and $\tilde{\gamma}_n=5.59545581$. The left-hand side branch is the curve determined by (\ref{lemma-3-3-1}) with $\beta\in (\gamma_n^*,\,\beta_n^*)$ and the right-hand side one with $\beta\in (\tilde{\gamma}_n,\,2n\pi)$. The shaded region near the origin $(0,\,0)$ is the stability region.}
\label{fig-5}
\end{center}
\end{figure}
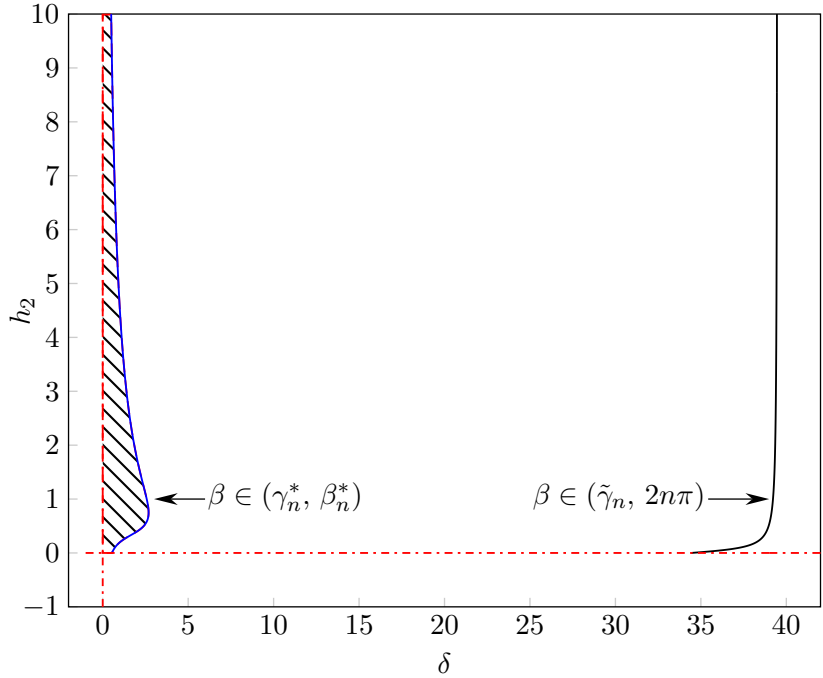

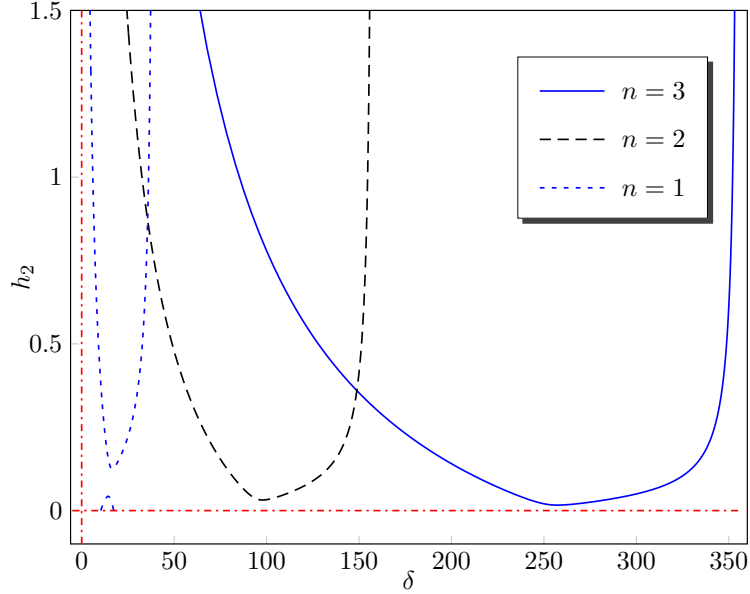
\begin{figure}[H]
\psset{xunit=0.002732\plotwidth,yunit=0.492944\plotwidth}%
\begin{center}
\scalebox{0.9}{
\begin{pspicture}(-46.479263,-0.277778)(361.686636,1.537427)%


\psline[linewidth=\AxesLineWidth,linecolor=GridColor](0.000000,-0.100000)(0.000000,-0.075656)
\psline[linewidth=\AxesLineWidth,linecolor=GridColor](50.000000,-0.100000)(50.000000,-0.075656)
\psline[linewidth=\AxesLineWidth,linecolor=GridColor](100.000000,-0.100000)(100.000000,-0.075656)
\psline[linewidth=\AxesLineWidth,linecolor=GridColor](150.000000,-0.100000)(150.000000,-0.075656)
\psline[linewidth=\AxesLineWidth,linecolor=GridColor](200.000000,-0.100000)(200.000000,-0.075656)
\psline[linewidth=\AxesLineWidth,linecolor=GridColor](250.000000,-0.100000)(250.000000,-0.075656)
\psline[linewidth=\AxesLineWidth,linecolor=GridColor](300.000000,-0.100000)(300.000000,-0.075656)
\psline[linewidth=\AxesLineWidth,linecolor=GridColor](350.000000,-0.100000)(350.000000,-0.075656)
\psline[linewidth=\AxesLineWidth,linecolor=GridColor](-6.000000,0.000000)(-1.608000,0.000000)
\psline[linewidth=\AxesLineWidth,linecolor=GridColor](-6.000000,0.500000)(-1.608000,0.500000)
\psline[linewidth=\AxesLineWidth,linecolor=GridColor](-6.000000,1.000000)(-1.608000,1.000000)
\psline[linewidth=\AxesLineWidth,linecolor=GridColor](-6.000000,1.500000)(-1.608000,1.500000)

{ \footnotesize 
\rput[t](0.000000,-0.124344){$0$}
\rput[t](50.000000,-0.124344){$50$}
\rput[t](100.000000,-0.124344){$100$}
\rput[t](150.000000,-0.124344){$150$}
\rput[t](200.000000,-0.124344){$200$}
\rput[t](250.000000,-0.124344){$250$}
\rput[t](300.000000,-0.124344){$300$}
\rput[t](350.000000,-0.124344){$350$}
\rput[r](-10.392000,0.000000){$0$}
\rput[r](-10.392000,0.500000){$0.5$}
\rput[r](-10.392000,1.000000){$1$}
\rput[r](-10.392000,1.500000){$1.5$}
} 

\psframe[linewidth=\AxesLineWidth,dimen=middle](-6.000000,-0.100000)(360.000000,1.500000)

{ \small 
\rput[b](177.000000,-0.277778){
\begin{tabular}{c}
$\delta$\\
\end{tabular}
}

\rput[t]{90}(-46.479263,0.700000){
\begin{tabular}{c}
$h_2$\\
\end{tabular}
}
} 

\newrgbcolor{color212.1261}{0  0  1}
\psline[plotstyle=line,linejoin=1,linestyle=solid,linewidth=\LineWidth,linecolor=color212.1261]
(64.062939,1.500000)(67.881476,1.384325)
\psline[plotstyle=line,linejoin=1,linestyle=solid,linewidth=\LineWidth,linecolor=color212.1261]
(353.132119,1.499988)(353.132136,1.500000)
\psline[plotstyle=line,linejoin=1,linestyle=solid,linewidth=\LineWidth,linecolor=color212.1261]
(67.881476,1.384325)(73.243491,1.246408)(78.304259,1.133568)(83.088486,1.039534)(87.618252,0.959967)
(91.913344,0.891768)(95.991550,0.832662)(99.868905,0.780945)(103.559902,0.735312)(107.077671,0.694750)
(110.434138,0.658458)(113.640158,0.625795)(116.705634,0.596244)(119.639618,0.569379)(122.450402,0.544850)
(125.145593,0.522366)(127.732187,0.501680)(130.216624,0.482586)(132.604848,0.464907)(134.902347,0.448490)
(137.114201,0.433206)(139.245119,0.418941)(141.299469,0.405597)(143.281311,0.393086)(145.194423,0.381334)
(147.042325,0.370274)(148.828301,0.359845)(150.555416,0.349996)(153.844357,0.331854)(156.929972,0.315527)
(159.830585,0.300755)(162.562383,0.287326)(166.374508,0.269327)(169.880704,0.253488)(173.116465,0.239443)
(177.061702,0.223018)(180.639321,0.208749)(184.668579,0.193345)(188.955159,0.177688)(193.331296,0.162430)
(197.663674,0.148002)(202.282089,0.133318)(207.267014,0.118228)(212.506318,0.103166)(217.778955,0.088790)
(223.156840,0.074904)(228.517788,0.061831)(233.753510,0.049836)(238.473961,0.039767)(242.513860,0.031876)
(245.830232,0.026119)(248.566867,0.022098)(250.944638,0.019355)(253.161779,0.017576)(255.411025,0.016584)
(257.904119,0.016335)(260.928020,0.016925)(264.902587,0.018628)(270.329128,0.021892)(277.126746,0.026860)
(284.120356,0.032763)(290.457862,0.038868)(296.008085,0.044962)(300.858351,0.051036)(305.118300,0.057120)
(308.885164,0.063253)(312.233549,0.069460)(315.225649,0.075764)(317.910824,0.082179)(320.332389,0.088724)
(322.523731,0.095407)(324.511661,0.102232)(326.323495,0.109213)(327.979702,0.116359)(329.497415,0.123671)
(330.890352,0.131147)(332.172418,0.138792)(333.357682,0.146626)(334.452999,0.154632)(335.468992,0.162827)
(336.412688,0.171208)(337.291175,0.179782)(338.107903,0.188522)(338.870057,0.197447)(339.581162,0.206541)
(340.248479,0.215845)(340.875594,0.225361)(341.462398,0.235036)(342.016227,0.244942)(342.537000,0.255033)
(343.024645,0.265249)(343.486544,0.275697)(343.922646,0.286333)(344.336634,0.297205)(344.728468,0.308275)
(345.098111,0.319491)(345.449264,0.330920)(345.781900,0.342520)(346.095992,0.354237)(346.395258,0.366167)
(346.679677,0.378268)(346.952977,0.390669)(347.211399,0.403162)(347.458674,0.415884)(347.694789,0.428800)
(347.919734,0.441871)(348.137249,0.455287)(348.343575,0.468786)(348.542454,0.482575)(348.730128,0.496350)
(348.910341,0.510336)(349.083089,0.524502)(349.248365,0.538810)(349.409922,0.553569)(349.563999,0.568417)
(349.710591,0.583302)(349.853453,0.598576)(349.988823,0.613806)(350.120457,0.629376)(350.248353,0.645280)
(350.372510,0.661506)(350.489162,0.677513)(350.602070,0.693761)(350.711230,0.710228)(350.816643,0.726889)
(350.918305,0.743714)(351.016216,0.760672)(351.110374,0.777726)(351.204545,0.795564)(351.294960,0.813478)
(351.381619,0.831422)(351.464519,0.849345)(351.547430,0.868061)(351.626581,0.886714)(351.701971,0.905242)
(351.777368,0.924563)(351.849004,0.943701)(351.916876,0.962579)(351.984754,0.982230)(352.048867,1.001542)
(352.112986,1.021632)(352.173338,1.041293)(352.233696,1.061727)(352.294059,1.082982)(352.350653,1.103698)
(352.407252,1.125225)(352.460082,1.146090)(352.512916,1.167747)(352.565754,1.190239)(352.614821,1.211917)
(352.663892,1.234402)(352.712966,1.257739)(352.758267,1.280080)(352.803572,1.303231)(352.848880,1.327238)
(352.890415,1.350036)(352.931952,1.373633)(352.973491,1.398072)(353.015033,1.423398)(353.052800,1.447233)
(353.090570,1.471883)(353.128341,1.497388)(353.132119,1.499988)

\newrgbcolor{color213.1256}{0  0  0}
\psline[plotstyle=line,linejoin=1,linestyle=dashed,linewidth=\LineWidth,linecolor=color213.1256]
(24.467496,1.500000)(25.320757,1.431280)
\psline[plotstyle=line,linejoin=1,linestyle=dashed,linewidth=\LineWidth,linecolor=color213.1256]
(155.735703,1.499470)(155.736473,1.500000)
\psline[plotstyle=line,linejoin=1,linestyle=dashed,linewidth=\LineWidth,linecolor=color213.1256]
(25.320757,1.431280)(26.624301,1.336775)(27.880215,1.254085)(29.091067,1.181123)(30.259243,1.116268)
(31.386964,1.058241)(32.476299,1.006016)(33.529178,0.958766)(34.547405,0.915812)(35.532665,0.876593)
(36.486537,0.840643)(37.410500,0.807569)(38.305942,0.777040)(39.174167,0.748772)(40.016398,0.722524)
(40.833790,0.698086)(41.627427,0.675278)(42.398333,0.653941)(43.147472,0.633939)(43.875756,0.615148)
(44.584046,0.597464)(45.273156,0.580790)(45.943858,0.565043)(46.596880,0.550147)(47.232916,0.536035)
(47.852621,0.522647)(48.456618,0.509929)(49.045499,0.497831)(49.619826,0.486310)(50.180133,0.475325)
(50.726930,0.464839)(51.260701,0.454820)(52.290991,0.436060)(53.274443,0.418833)(54.214192,0.402957)
(55.113100,0.388281)(55.973782,0.374672)(57.198324,0.356019)(58.349491,0.339203)(59.433719,0.323964)
(60.456707,0.310091)(61.734119,0.293419)(62.922167,0.278527)(64.295327,0.262009)(65.558230,0.247436)
(66.946135,0.232066)(68.412624,0.216516)(69.916444,0.201265)(71.579713,0.185165)(73.315257,0.169174)
(75.052593,0.153945)(76.836567,0.139071)(78.664463,0.124595)(80.565745,0.110324)(82.452255,0.096937)
(84.326061,0.084404)(86.166378,0.072867)(87.895932,0.062793)(89.455107,0.054461)(90.845086,0.047778)
(92.085559,0.042562)(93.206538,0.038606)(94.255371,0.035676)(95.280197,0.033602)(96.333747,0.032284)
(97.471161,0.031697)(98.770322,0.031886)(100.336106,0.032998)(102.315960,0.035310)(104.878146,0.039213)
(108.055593,0.044937)(111.525526,0.052009)(114.877023,0.059616)(117.932596,0.067312)(120.675857,0.074980)
(123.133949,0.082608)(125.347873,0.090237)(127.348643,0.097893)(129.163892,0.105601)(130.817698,0.113388)
(132.328047,0.121263)(133.713562,0.129252)(134.988752,0.137372)(136.163845,0.145622)(137.249365,0.154011)
(138.256105,0.162561)(139.190339,0.171267)(140.058490,0.180127)(140.867117,0.189150)(141.620498,0.198327)
(142.325399,0.207683)(142.986273,0.217228)(143.605225,0.226940)(144.186819,0.236839)(144.733249,0.246913)
(145.246740,0.257152)(145.731975,0.267603)(146.188813,0.278215)(146.619557,0.288992)(147.028970,0.300011)
(147.414521,0.311161)(147.781006,0.322534)(148.128354,0.334088)(148.458949,0.345862)(148.772735,0.357816)
(149.069658,0.369900)(149.352127,0.382165)(149.622559,0.394683)(149.880922,0.407421)(150.127186,0.420344)
(150.361319,0.433404)(150.585763,0.446702)(150.800494,0.460204)(151.005493,0.473873)(151.200739,0.487662)
(151.388687,0.501710)(151.569325,0.515992)(151.742638,0.530477)(151.908615,0.545129)(152.067244,0.559906)
(152.220993,0.575010)(152.367374,0.590166)(152.508858,0.605591)(152.645439,0.621265)(152.777110,0.637162)
(152.903866,0.653252)(153.025699,0.669502)(153.142605,0.685871)(153.254577,0.702319)(153.364100,0.719187)
(153.468681,0.736068)(153.570805,0.753335)(153.667979,0.770534)(153.762690,0.788071)(153.854937,0.805936)
(153.944716,0.824120)(154.029532,0.842068)(154.111875,0.860258)(154.191743,0.878669)(154.269135,0.897278)
(154.344049,0.916058)(154.416483,0.934980)(154.488933,0.954706)(154.558901,0.974563)(154.626386,0.994515)
(154.691384,1.014522)(154.753895,1.034538)(154.816419,1.055366)(154.876453,1.076170)(154.933997,1.096897)
(154.991552,1.118444)(155.046614,1.139866)(155.101685,1.162130)(155.154262,1.184215)(155.204344,1.206047)
(155.254434,1.228705)(155.302027,1.251038)(155.349627,1.274203)(155.394728,1.296959)(155.439836,1.320547)
(155.484950,1.345015)(155.527564,1.368975)(155.570184,1.393810)(155.610301,1.418026)(155.650424,1.443103)
(155.690553,1.469089)(155.728177,1.494319)(155.735703,1.499470)

\newrgbcolor{color214.1256}{0  0  1}
\psline[plotstyle=line,linejoin=1,linestyle=dashed,dash=2pt 3pt,linewidth=\LineWidth,linecolor=color214.1256]
(4.521445,1.500000)(4.967420,1.318789)
\psline[plotstyle=line,linejoin=1,linestyle=dashed,dash=2pt 3pt,linewidth=\LineWidth,linecolor=color214.1256]
(37.278912,1.498609)(37.280969,1.500000)
\psline[plotstyle=line,linejoin=1,linestyle=dashed,dash=2pt 3pt,linewidth=\LineWidth,linecolor=color214.1256]
(4.967420,1.318789)(5.429505,1.164563)(5.853102,1.044620)(6.242863,0.948676)(6.602723,0.870185)
(6.936028,0.804784)(7.245643,0.749453)(7.534036,0.702033)(7.803342,0.660942)(8.055420,0.624994)
(8.291896,0.593280)(8.514196,0.565096)(8.723578,0.539884)(8.921155,0.517198)(9.107918,0.496678)
(9.284746,0.478027)(9.452429,0.461002)(9.611674,0.445401)(9.763116,0.431051)(9.907330,0.417809)
(10.044834,0.405551)(10.176098,0.394173)(10.301550,0.383583)(10.421578,0.373702)(10.536539,0.364461)
(10.752529,0.347670)(10.951809,0.332809)(11.136313,0.319565)(11.388907,0.302200)(11.616517,0.287280)
(11.822843,0.274325)(12.069945,0.259497)(12.290164,0.246896)(12.534295,0.233584)(12.790166,0.220364)
(13.048153,0.207788)(13.301397,0.196184)(13.545529,0.185708)(13.799604,0.175574)(14.035234,0.166911)
(14.269769,0.159032)(14.498867,0.152098)(14.720628,0.146145)(14.934791,0.141136)(15.150946,0.136838)
(15.367555,0.133297)(15.591583,0.130419)(15.821900,0.128241)(16.070595,0.126697)(16.341796,0.125843)
(16.639873,0.125737)(16.979704,0.126463)(17.370433,0.128151)(17.831399,0.131001)(18.382349,0.135276)
(19.039789,0.141246)(19.798996,0.148996)(20.623723,0.158240)(21.466801,0.168487)(22.287307,0.179240)
(23.065532,0.190210)(23.795023,0.201259)(24.478881,0.212386)(25.115310,0.223506)(25.712818,0.234715)
(26.270027,0.245936)(26.790858,0.257185)(27.284766,0.268625)(27.745654,0.280067)(28.183380,0.291705)
(28.591972,0.303332)(28.981635,0.315190)(29.346498,0.327055)(29.697062,0.339228)(30.027529,0.351475)
(30.343101,0.363947)(30.637954,0.376360)(30.922980,0.389132)(31.192405,0.401975)(31.446019,0.414816)
(31.689291,0.427889)(31.922096,0.441158)(32.144313,0.454583)(32.355828,0.468114)(32.562274,0.482096)
(32.757839,0.496111)(32.942421,0.510089)(33.121718,0.524425)(33.295684,0.539111)(33.458453,0.553605)
(33.615778,0.568363)(33.767616,0.583364)(33.913931,0.598580)(34.054684,0.613980)(34.189841,0.629527)
(34.319367,0.645179)(34.443229,0.660890)(34.561397,0.676607)(34.679763,0.693117)(34.792395,0.709592)
(34.899265,0.725966)(35.006295,0.743140)(35.107527,0.760150)(35.202936,0.776913)(35.298472,0.794457)
(35.388152,0.811665)(35.477944,0.829662)(35.561852,0.847219)(35.645856,0.865562)(35.723948,0.883344)
(35.802124,0.901897)(35.874361,0.919751)(35.946670,0.938350)(36.019050,0.957742)(36.085462,0.976258)
(36.151934,0.995528)(36.218467,1.015598)(36.279003,1.034581)(36.339589,1.054309)(36.400225,1.074827)
(36.460911,1.096185)(36.515570,1.116167)(36.570271,1.136914)(36.625011,1.158469)(36.679792,1.180880)
(36.728520,1.201564)(36.777280,1.223005)(36.826072,1.245247)(36.874896,1.268335)(36.917643,1.289271)
(36.960415,1.310928)(37.003211,1.333344)(37.046031,1.356560)(37.088876,1.380620)(37.131745,1.405571)
(37.168510,1.427705)(37.205293,1.450565)(37.242093,1.474186)(37.278912,1.498609)

\newrgbcolor{color215.1256}{0  0  1}
\psline[plotstyle=line,linejoin=1,linestyle=dashed,dash=2pt 3pt,linewidth=\LineWidth,linecolor=color215.1256]
(10.333291,-0.000000)(10.839010,0.008482)(11.357385,0.016369)(11.874874,0.023448)(12.374474,0.029500)
(12.840463,0.034375)(13.264461,0.038051)(13.645261,0.040595)(13.987915,0.042125)(14.298938,0.042751)
(14.586252,0.042561)(14.857572,0.041603)(15.120536,0.039888)(15.382875,0.037377)(15.653480,0.033973)
(15.943264,0.029497)(16.269312,0.023606)(16.661272,0.015639)(17.208742,0.003540)(17.361641,0.000051)

\newrgbcolor{color216.1256}{1  0  0}
\psline[plotstyle=line,linejoin=1,linestyle=dashed,dash=3pt 2pt 1pt 2pt,linewidth=\LineWidth,linecolor=color216.1256]
(-5.000000,0.000000)(355.000000,0.000000)

\newrgbcolor{color217.1256}{1  0  0}
\psline[plotstyle=line,linejoin=1,linestyle=dashed,dash=3pt 2pt 1pt 2pt,linewidth=\LineWidth,linecolor=color217.1256]
(0.000000,-0.100000)(0.000000,1.500000)

{ \small 
\rput(288.500000,1.11111){%
\psshadowbox[framesep=0pt,linewidth=\AxesLineWidth]{\psframebox*{\begin{tabular}{l}
\Rnode{a1}{\hspace*{0.0ex}} \hspace*{0.7cm} \Rnode{a2}{~~$n=3$} \\
\Rnode{a3}{\hspace*{0.0ex}} \hspace*{0.7cm} \Rnode{a4}{~~$n=2$} \\
\Rnode{a5}{\hspace*{0.0ex}} \hspace*{0.7cm} \Rnode{a6}{~~$n=1$} \\
\end{tabular}}
\ncline[linestyle=solid,linewidth=\LineWidth,linecolor=color212.1261]{a1}{a2}
\ncline[linestyle=dashed,linewidth=\LineWidth,linecolor=color213.1256]{a3}{a4}
\ncline[linestyle=dashed,dash=2pt 3pt,linewidth=\LineWidth,linecolor=color214.1256]{a5}{a6}
}%
}%
} 
\end{pspicture}
}%
\caption{The curves of  $(\delta,\,h_2)$, $\delta>0,\,h_2>0$ where    $\xi=1.62$,   $q=0.8$ with $\xi>2q$.  The upper lobes are determined by (\ref{lemma-2-3-1}) with $\beta\in (\beta_n^*,\,2n\pi)$, $n=1,\,2,\,3$, respectively and the lower one with $\beta\in (\gamma_n^*,\,\tilde{\gamma}_n)$. The   connected region near the origin $(0,\,0)$ without crossing the lobes, or the vertical line $\delta=0$ or the horizontal line $h_2=0$ is the stability region.}
\label{fig-6}
\end{center}
\end{figure}
\section{Concluding Remarks}\label{Concluding-remarks}
Starting from the model of turning processes with state-dependent delay, we investigated two spindle control strategies one of which contains a state-dependent delay and the other one an instantaneous PD (proportional-derivative) control. Using a time domain transformation, we show that the controlled model can be reduced into models with constant and distributed delays. Since whose characteristic equations are transcendental equations containing both of the terms $\lambda$ and $e^\lambda$, the analytical description of the stability region in the space of the   parameters $\delta$ and $h$ composite of the intrinsic parameters of the turning processes becomes complicated.
The analysis shows that   feedback spindle control with state-dependent delay still can stabilize the equilibrium state of the turning processes, even though the simulated stability region does not show  improvement of the stability region over the PD control.  This means that the  practically realizable  delayed feedback control is feasible and is able to achieve stability. When the parameters are beyond the stability region, we refer to \cite{HU-JDE-1} for analysis of global Hopf bifurcation of turning processes with threshold-type state-dependent delay.

 We remark that with the delayed feedback control, $h_1=K_1p^{q-1}\left(1-\frac{p}{k_r}\right)$  may not   be positive as in in Figure~\ref{fig-4}. However, it remains unclear whether or not a positive $h_1$ will remove the bound of the stability region in the $\delta$-direction and the lead to a stability region unbounded in both of the $\delta$ and $h_1$ directions. We also remark that we may improve the stability region by investigating nonlinear type   spindle speed control with state-dependent delay, namely how to find $\Omega(t)=r(x(t-\tau(t)))$, where $r: \mathbb{R}\rightarrow\mathbb{R}$ is a nonlinear function instead of the linear one investigated in the current work, in order to improve the stability region. We leave these problems for future work.


\begin{thebibliography}{10}

\bibitem{Altintas}
{\sc Altintas, Y., and Budak, E.}
\newblock Analytical prediction of stability lobes in milling.
\newblock {\em CIRP Annals - Manufacturing Technology 44}, 1 (1995), 357--362.



\bibitem{BST}
{\sc Bachrathy, D., St\'ep\'an, G. and Turi, J.}
\newblock State dependent regenerative effect in milling processes.
\newblock {\em J. Comput. Nonlinear Dynam. 6}, 4 (2011), 
\newblock Article Number: 041002.  doi:10.1115/1.4003624.

\bibitem{Balach}
{\sc Balachandran, B., and Zhao, M.~X.}
\newblock A mechanics based model for study of dynamics of milling operations.
\newblock {\em Meccanica 2\/} (2000), 89--109.


\bibitem{HU-JDE-1}
{\sc Balanov, Z., Hu, Q. and Krawcewicz, W.} 
\newblock Global Hopf bifurcation of differential equations with threshold-type state-dependent delay, 
\newblock{\em J. Differential Equations}, 
257 (2014),  2622--2670.  
  






%
%




\bibitem{Gilsinn}
{\sc Gilsinn, D.~E.}
\newblock Estimating critical hopf bifurcation parameters for a second-order
  delay differential equation with application to machine tool chatter.
\newblock {\em Nonlinear Dynam. 30}, 2 (2002), 103--154.



\bibitem{HKWW}
{\sc Hartung, F., Krisztin, T., Walther, H.-O., and Wu, J.}
\newblock Chapter 5: {F}unctional {D}ifferential {E}quations with
  {S}tate-{D}ependent {D}elays: {T}heory and {A}pplications.
\newblock In {\em Handbook of Differential Equations: Ordinary Differential
  Equations}, P.~D. {A. Ca{\~N}ada} and A.~Fonda, Eds., vol.~3. North-Holland,
  2006, pp.~435--545.
  
 


\bibitem{HKT}
{\sc Hu, Q., Krawcewicz, W., and Turi, J.}
\newblock Stabilization in a state-dependent model of turning processes.
\newblock {\em  SIAM J. Appl. Math.}, 72 (2011), 1--24.




\bibitem{HKT-1}
{\sc Hu, Q., Krawcewicz, W., and Turi, J.}
\newblock Global stability lobes of a state-dependent model of turning
  processes.
\newblock {\em SIAM Journal on Applied Mathematics 72\/} (2012), 1383--1405.




\bibitem{Insperger-2}
{\sc Insperger, T., and St{\'e}p{\'a}n, G.}
\newblock Stability analysis of turning with periodic spindle speed maching.
\newblock {\em J. Manuf. Sci. Eng. 122}, 3 (2000), 391--397.

\bibitem{Turi-1}
{\sc Insperger, T., St{\'e}p{\'a}n, G., and Turi, J.}
\newblock State-dependent delay in regenerative turning processes.
\newblock {\em Nonlinear Dyn. 47\/} (2007), 275--283.






\bibitem{Ismail}
{\sc Ismail, F., and Soliman, E.}
\newblock A new method for the identification of stability lobes in machining.
\newblock {\em Int. J. Mach. Tools Manufacture 37}, 6 (1997), 763--774.




\bibitem{KT}
{\sc Koenigsberger, F., and Tlusty, J.}
\newblock {\em Machine Tool Structures}, vol.~1.
\newblock Pergamon Press, 1970.

\bibitem{kw}
{\sc Krawcewicz, W., and Wu, J.}
\newblock {\em Theory of Degrees with Applications to Bifurcations and
  Differential Equations}.
\newblock Canadian Mathematical Society Series of Monographs and Advanced
  Texts. Johns Wiley \& Sons, New York, 1997.
  
 


\bibitem{Long-1}
{\sc Long, X., and Balachandran, B.}
\newblock Stability of up-milling and down-milling operations with variable
  spindle speeds.
\newblock {\em J. Vibration and Control 16}, 7--8 (2010), 1151 -- 1168.

\bibitem{Pak}
{\sc Pakdemirli, M., and Ulsoy, A.~G.}
\newblock Perturbation analysis of spindle speed variation in machine tool
  chatter.
\newblock {\em J. Vibration and Control 3\/} (1996), 261--278.



\bibitem{Sexton}
{\sc Sexton, J.~S., Milne, R.~D., and Stone, B.~J.}
\newblock A stability analysis of single point machining with varying spindle
  control.
\newblock {\em Appl. Math. Modeling 1\/} (1977), 310 -- 318.



\bibitem{Smith-Tlusky}
{\sc Smith, S., and Tlusty, J.}
\newblock Update on high-speed milling dynamics.
\newblock {\em ASME Journal of Engineering for Industry 112\/} (1990), 142 --
  149.

  

\bibitem{Stepan}
{\sc St{\'e}p{\'a}n, G.}
\newblock {\em Retarded Dynamical Systems: Stability and Characteristic
  Functions}.
\newblock Longman Sci. tech., UK, 1989.



\bibitem{Stone-Sue}
{\sc Stone, E., and Campbell, S.}
\newblock Stability and bifurcation analysis of a nonlinear dde model for
  drilling.
\newblock {\em J. Nonlinear Science 14}, 1 (2004), 27--57.

\bibitem{Taylor}
{\sc Taylor, F.~W.}
\newblock On the art of cutting metals.
\newblock {\em Oscillation and Dynamics in Delay Equations, Contemporary
  Mathematics\/} (1907).

  

\bibitem{Tobias}
{\sc Tobias, S.~A.}
\newblock {\em Machine tool vibration}.
\newblock Blackie, London, 1965.

\bibitem{Tobias-0}
{\sc Tobias, S.~A., and Fishwick, W.}
\newblock Theory of regenerative machine tool chatter.
\newblock {\em The Engineer, London 205}, 1 (1958), 199 -- 203.

\end{thebibliography}
\end{document}